\documentclass[reqno]{amsart}
\usepackage[scale=0.75, centering, headheight=14pt]{geometry}
\usepackage[latin1]{inputenc}
\usepackage[T1]{fontenc}
\usepackage{lmodern}
\usepackage[english]{babel}
\usepackage{esint}
\usepackage{mathtools}

\usepackage{xcolor,amsmath,mathrsfs,amssymb,accents,graphicx,epstopdf, amsthm}
\usepackage{ bbold }
\usepackage{comment}
\usepackage{float}
\usepackage{subfig}
\usepackage{tikz}

\usepackage{textgreek}

\usepackage{tikz}
\usepackage{bbm}
\usepackage{amsmath,amssymb,amsfonts,amsthm}
\usepackage{mathtools,accents}
\usepackage{mathrsfs}
\usepackage{xfrac}
\usepackage{array} 
\usepackage{aliascnt}
\usepackage{booktabs} 
\usepackage{array} 

\usepackage{verbatim} 
\usepackage{subfig} 

\usepackage{mathrsfs, dsfont}
\usepackage{amssymb}
\usepackage{amsthm}
\usepackage{amsmath,amsfonts,amssymb,esint}
\usepackage{graphics,color}
\usepackage{enumerate}
\usepackage{mathtools,centernot}
\usepackage{tikz-cd}

\usepackage{microtype}
\usepackage{paralist} 
\usepackage{cases}
\usepackage[alphabetic,nobysame]{amsrefs}
\allowdisplaybreaks

\usepackage{braket}
\usepackage{bm}

\usepackage[citecolor=blue,colorlinks]{hyperref}
\addto\extrasenglish{}

\usepackage{enumerate}
\usepackage{xcolor}
\usepackage{aliascnt}

\numberwithin{equation}{section}

\newtheorem{theorem}{Theorem}[section]
\newtheorem{question}{Question}
\newtheorem{lemma}[theorem]{Lemma}
\newtheorem{definition}[theorem]{Definition}
\newtheorem{remark}[theorem]{Remark}
\newtheorem{proposition}[theorem]{Proposition}
\newtheorem{corollary}[theorem]{Corollary}
\newtheorem{example}[theorem]{Example}

\usepackage{enumitem,xparse}
\newlist{Claim}{description}{2}
\setlist[Claim]{labelindent=2em,leftmargin=*}
\newif\ifInsideClaim
\newcounter{claim}[theorem]
\newcounter{cclaim}[claim]

\let\originalqedsymbol\qedsymbol

\newcommand{\equivclass}[1]{%
	#1/{\sim}%
}

\newcommand{\Q}{\mathbb{Q}}
\newcommand{\N}{\mathbb{N}}
\newcommand{\R}{\mathbb{R}}

\newcommand{\Z}{\mathbb{Z}}
\newcommand{\T}{\mathbb{T}}
\newcommand{\Leb}[1]{{\mathscr L}^{#1}} 

\renewcommand{\P}{\mathbb{P}}

\newcommand{\<}{\langle}
\renewcommand{\>}{\rangle}
\renewcommand{\a}{\alpha}
\renewcommand{\b}{\beta}
\renewcommand{\d}{\delta}
\newcommand{\D}{\Delta}
\newcommand{\e}{\varepsilon}
\newcommand{\g}{\gamma}
\newcommand{\G}{\Gamma}

\newcommand{\s}{\sigma}

\renewcommand{\t}{\tau}
\renewcommand{\O}{\Omega}
\renewcommand{\o}{\omega}
\newcommand{\z}{\zeta}

\newcommand{\supp}{\operatorname{supp}}

\newcommand{\dist}{\rm dist}

\newcommand{\vo}{\vec{o}\@ifnextchar{^}{\,}{}}

 \newcommand{\bb}{{\mbox{\boldmath$b$}}}

 \newcommand{\tauV}{{\kern-3pt\tau}}

 \newcommand{\oVVVk}{\overline{\mbox{\boldmath$V$}}\kern-3pt}
 \newcommand{\tVVVk}{\tilde{\mbox{\boldmath$V$}}\kern-3pt}
\newcommand{\WW}{{\mbox{\boldmath$W$}}}

 \newcommand{\aalpha}{{\mbox{\boldmath$\alpha$}}}

 \newcommand{\nnu}{{\mbox{\boldmath$\nu$}}}

 \newcommand{\ssigma}{{\mbox{\boldmath$\sigma$}}}
 \newcommand{\rrho}{{\mbox{\boldmath$\rho$}}}

 \newcommand{\eeta}{{\mbox{\boldmath$\eta$}}}

	\title{On the existence of Markovian measures on continuous paths} 
\author [J. Pitcho]{Jules Pitcho}
\address{Jules Pitcho
	\hfill\break  
GSSI, Via Michele Iacobucci, 2, 67100, L'Aquila, Italy}
\email{jules.pitcho@gssi.it}

\begin{document}
	\maketitle

\begin{abstract}
	Let $\eeta$ be a positive Radon measure on the space of continuous paths from $\R$ into a locally compact Polish space $Y$, and assume that $\eeta$ admits an invariant measure. 
	We explicit conditions on $\eeta$ such that successive Markovianisation of $\eeta$ over any dense and countable set of times have all limit points (in the weak-star topology on Radon measures) satisfying the strong Markov property.  We show that if $Y$ is a locally compact Polish group, and $\eeta$ is left or right translation invariant, then $\eeta$ satisfies such conditions. 
	Our proof uses the Zermalo-Fraenkel and Axiom of Dependant Choice axiomatisation of set theory, in which countable products of compacts are compact. 
\end{abstract}
\section{Introduction}
\subsection{Motivation} It is known since the nineteenth century that spatially Lipschitz vector fields admit a unique flow. In the late twentieth century and the early twenty first century, vector fields with only one weak spatial derivative -- whether a locally integrable function or a Radon measure -- have been shown to admit an almost everywhere unique flow in \cite{DPL89,AmbBV,BianchiniBonicatto20} (see also \cite{CripDeL06:estimates} for a quantitative stability  theory, and \cite{BianchiniDeNitti} for a regularity theory). For rougher vector fields, superposition principles \cite{ambrosiocrippaedi,Smirnov94} guarantee existence of measures concentrated on integral curves called \emph{Lagrangian representations}. 

The structure theory of Lagrangian representations is however little understood. Results are known in the case vector fields singular at the initial time \cite{Pitcho_Int,Pitcho_SDE}, in the case of scalar conservation laws \cite{Bianchini_Marconi_Structure,Bianchini_Marconi_Concentration} or in the case of autonomous one-dimensional ODE's \cite{BressanMazzolaNguyen-Markovian23}. 

An interesting question is then whether Lagrangian representations can be memoryless, that is whether knowledge of the present state of a Lagrangian representation suffices to uniquely determine its future evolution, without any further information on its past. This property is expressed mathematically through a Markov property. 
Motivated by this question, we directly study measures on the space of continuous paths valued in a locally compact Polish space, and ask which such measures admit a modification satisfying a Markov property. 

\subsection{Our contribution in a nutshell} Aim of this paper is to construct a Markovian version of any measure on the space of continuous paths valued in a locally compact Polish space satisfying a regularity property, and admitting an invariant measure. We also show that translation invariant measures satisfy this regularity property. 

More precisely, we will define the Markov operator $M_t$ at a time $t\in\R$, which maps a measure $\eeta$ on the space of continuous paths to the unique measure on the space of continuous paths $M_t(\eeta)$, for which the distribution observed before time $t$ and after time $t$ coincide with that of $\eeta$, but for which the distribution before and after time $t\in \R$ are independant. $\eeta$ is then said to be \emph{Markovianised} at time $t$, if $M_t(\eeta)=\eeta$. We will further say that $\eeta$ satisfies the Markov property if it is Markovianised at every time $t\in \R$. We will then show that successively applying the Markov operator at a finite number of times gives a unique measure, which does not depend on the ordering of these times, thereby showing that Markovianisation on a finite set of times is well-defined. The Markov hull of $\eeta$ will be defined as the set of all measures reachable in the weak-star limit by successive Markovianisation of $\eeta$ over any countable and dense subset of $\R$.

 We will prove two results. Firstly, we will show that under a regularity hypothesis on $\eeta$, every element of the (non-empty) Markov hull of $\eeta$ satisfies the (strong) Markov property. Secondly, we will show that every translation invariant $\eeta$ satisfies the aforementioned regularity property whereby every element in its Markov hull satisfies the (strong) Markov property.

\bigskip

\subsection{Statement of results}
We will now briefly introduce the mathematical language to state our results. 
\subsubsection{The space of continuous paths}
Let $Y$ be a locally compact Polish space, let $\mu$ be a positive Radon measure on $Y$. Let $\G$ be a locally compact Polish subspace of the space of continuous paths from $\R$ to $Y$  endowed with the topology of uniform convergence on compacts such that:
\begin{itemize} 
\item for each compact time interval $I\subset\R$ and each compact set $K$ in $Y$, the set $\{\g\in \G:\g(t)=y\text{ for some $y\in K$ and some $t\in I$}\}$ is compact in $\G$;
\item for every $h\in \R$, defining the map $b_h(\g(\cdot))=\g(\cdot- h)$ from the set of continuous maps from $\R$ into $Y$, into the same set, we have $b_h(\G)=\G$. 
\end{itemize} 
See Section \ref{subsec_measure_space_cont_paths} for details. 
\begin{example}
	If $Y=\R^d$, we could take $\G$ to be the space of $L$-Lipschitz paths for any $L>0$. 
\end{example}
Let $e_t(\g)=\g(t)$ be the evaluation map at time $t\in \R$ from $\G$ to $Y$ and let $(e_t)_\#$ be the induced pushforward map from Radon measures on $\G$ to Radon measures on $Y$. For every interval $I\subset \R$, we let $\G\lfloor_I$ denote the set of restrictions $\g\lfloor_I$ of $\g$ to $I$ where $\g$ runs in $\G$, which we endow with the final topology with respect to the restriction map $\G\ni \g\longmapsto \g\lfloor_I\in \G\lfloor_I$, i.e. the finest topology for which the restriction map is continuous. 
\subsubsection{The $\mu$-invariant measure on continuous paths}
Let $\eeta$ be a positive Radon measure on $\G$ such that $(e_t)_\#\eeta=\mu$ for every $t\in \R$. 
\subsubsection{The Markovianisation map at a single time}
Let $\{\eta_{t,x}\}_{x\in Y}$ (a Borel family of Radon probability measures on $\G$)  be a disintegration of $\eeta$ with respect to $e_t$ and $\mu$ (see Section \ref{subsec_disint}). 
For every $t\in \R$, we define the Markovianisation map at time $t$ by
\begin{equation}
M_t(\eeta)=\int_{Y} \eta^{(-\infty,t]}_{t,x} \otimes_{(t,x)} \eta^{[t,+\infty)}_{t,x}\mu(dx),
\end{equation}
where $\eta^{(-\infty,t]}_{t,x} $ is the pushforward of $\eta_{t,x}$ through the restriction map $\G\ni \g\longmapsto \g\lfloor_{(-\infty,t]}\in \G\lfloor_{(-\infty,t]}$, and $\eta^{[t,+\infty)}_{t,x} $ is the pushforward of $\eta_{t,x}$ through the restriction map $\G\ni \g\longmapsto \g\lfloor_{[t,+\infty)}\in \G\lfloor_{[t,+\infty)}$. The tensor product $\eta^{(-\infty,t]}_{t,x} \otimes_{(t,x)} \eta^{[t,+\infty)}_{t,x}$ will be rigourously constructed in Section \ref{subsec_tensor_product_through_map} and Section \ref{subsect_def_markov}. 
\subsubsection{The Markov property} 
We will say that $\eeta$ satisfies the Markov property, if for every $t\in \R$, we have $M_t(\eeta)=\eeta$. 
\subsubsection{The Markovianisation map at several times}
Given a finite subset $F:=\{t_1,\dots t_n\}\subset \R$, we define $M_F(\eeta):=(M_{t_n}\circ \dots\circ M_{t_1})(\eeta)$ and we will show that $M_F$ does not depend on the ordering of $F$ in Section \ref{subsec_markov_several_time}.
\subsubsection{The Markov hull}
Let $D$ be a dense countable subset of $\R$. We will call the Markov $D$-hull of $\eeta$ the set of accumulation points -- in the weak-star topology on Radon measure on $\G$ -- of the set whose elements are $M_{D_N}(\eeta)$ where $N$ runs in $\N$ and $(D_N)_{N\in\N}$ runs in the class of sequences of subsets of $D$ such that $D_N$ has exactly $N$ elements and $D_{N}\subset D_{N+1}$ for every $N\in\N$. See Definition \ref{def_markov_D-hull}. 

We will then call \emph{Markov hull }of $\eeta$ the union of the Markov $D$-hulls of $\eeta$ over all dense and countable subsets $D$ of $\R$. See Definition \ref{def_markov_hull}. 

\subsubsection{The space of continuous disintegrations}
We further define the set $\mathcal{X}_\mu$ of  all families $\{\nu_x\}_{x\in \supp\mu}$ of Radon probability measures on $\G$ such that the map
 $\supp\mu\ni x\longmapsto \nu_{x}$ is continuous and there exists $t\in \R$ such that for every $x\in \supp\mu$ we have $\supp\nu_x\subset \{\g\in\G:\g(t)=x\}$. We endow $\mathcal{X}_\mu$ with the topology of uniform convergence on compacts of continuous maps from $\supp \mu$ to the space of Radon probability measures  on $\G$. 
 
 \subsubsection{The strong Markov property} 
 We will say that $\eeta$ satisfies the strong Markov property, if $\eeta$ satisfies the Markov property and for every $t\in \R$, there is disintegration $\{\eta_{t,x}\}_{x\in \supp\mu}$ of $\eeta$ with respect to $e_t$ and $\mu$, which belongs to $\mathcal{X}_\mu$, and is such that  $\eta^{(-\infty,t]}_{t,x}\otimes_{(t,x)}\eta^{[t,+\infty)}_{t,x}=\eta_{t,x}$ for every $(t,x)\in \R\times \supp\mu.$ See Definition \ref{def_strong_markov}. 
\subsubsection{The Markov regular condition}
Finally, we will say that $\eeta$ is Markov regular, if for every compact $I\subset \R$ there exists a compact set $K$ in $\mathcal{X}_\mu$ such that for every finite subset $F$ of $\R$, and every $t\in I$, there exists a disintegration of $M_F(\eeta)$ with respect to $e_t$ and $\mu,$ which belongs to $K$. See Definition \ref{def_markov_regular}. 
\subsubsection{Statement of theorems}
Let us state our first result in the context of this paragraph. 
\begin{theorem}\label{thm_main_strong_markov}
	Assume that $\eeta$ is Markov regular. Then the Markov hull of $\eeta$ is non-empty, and each element of the Markov hull of $\eeta$ satisfies the strong Markov property. 
\end{theorem}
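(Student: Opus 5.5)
Non-emptiness of the Markov hull comes first. Fix a dense countable $D\subset\R$ and an increasing exhausting sequence $D_N\subset D$ with $\#D_N=N$. Each $M_{D_N}(\eeta)$ has all time marginals equal to $\mu$, since $M_t$ preserves the one-dimensional marginals at every time by construction. By the first structural assumption on $\G$, for a compact interval $I$ and a compact $K\subset Y$ the set of paths meeting $K$ during $I$ is compact. Hence for every compact set $C\subset\G$ (which we may enclose in such a set) we get a uniform bound $M_{D_N}(\eeta)(C)\le \mu(K)$. This uniform local mass bound gives weak-star relative compactness of $\{M_{D_N}(\eeta)\}_N$ among Radon measures on the locally compact Polish space $\G$. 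A subsequential limit therefore exists and lies in the Markov $D$-hull. Compactness of countable products, via Dependent Choice, is only needed later for diagonal extractions.

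Now let $\nnu$ be the weak-star limit of $M_{D_{N_k}}(\eeta)$. Fix a compact interval $I$ and let $K_I\subset\mathcal{X}_\mu$ be the compact set given by Markov regularity. For each $t\in D\cap I$ and each $k$, pick a disintegration $\{\eta^k_{t,x}\}\in K_I$ of $M_{D_{N_k}}(\eeta)$ at time $t$. Using the countability of $D$ and the compactness of $K_I$ (and of the product $K_I^{D\cap I}$), I extract by a diagonal argument a further subsequence along which $\eta^k_{t,\cdot}\to \nu_{t,\cdot}$ in $\mathcal{X}_\mu$ for all $t\in D\cap I$ simultaneously. Convergence is uniform on compacts of $\supp\mu$. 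Integrating against $\mu$ and using $\mu$-invariance shows that $\{\nu_{t,x}\}$ is a disintegration of $\nnu$ at $t$. I then extend to all $t\in I$ by approximating $t$ with $t_j\in D$. Path continuity and the uniform equicontinuity coming from compactness of $K_I$ (Arzelà–Ascoli-type control in $\mathcal{X}_\mu$) let me define $\nu_{t,x}$ as the limit of the $\nu_{t_j,x}$. It is supported on $\{\g(t)=x\}$ because $\g(t_j)\to\g(t)$ uniformly on compact sets of paths. Hence it lies in the closed set $K_I$, and it disintegrates $\nnu$ at time $t$.

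Next comes the Markov identity $\nu^{(-\infty,t]}_{t,x}\otimes_{(t,x)}\nu^{[t,+\infty)}_{t,x}=\nu_{t,x}$. For $t\in D$, $t\in D_{N_k}$ for all large $k$. Since $M_F$ does not depend on the ordering of $F$ (Section \ref{subsec_markov_several_time}), I may write $M_{D_{N_k}}=M_t\circ M_{D_{N_k}\setminus\{t\}}$. So $M_{D_{N_k}}(\eeta)$ is Markovianised at $t$, and its disintegration satisfies the tensor identity for $\mu$-a.e. $x$. By continuity of $x\mapsto\eta^k_{t,x}$ the identity then holds for every $x\in\supp\mu$.

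The main obstacle is passing to the limit in this identity. I need continuity of the map $\nu\mapsto \nu^{(-\infty,t]}\otimes_{(t,x)}\nu^{[t,+\infty)}$ under weak-star convergence of probability measures concentrated on $\{\g(t)=x\}$, jointly in $(x,\nu)$. I would prove this by testing against products $f(\g\lfloor_{(-\infty,t]})g(\g\lfloor_{[t,+\infty)})$ of bounded continuous functions, whose span is dense by Stone–Weierstrass on compacts. The restriction pushforwards are continuous, and the tensor product of weakly convergent probability measures converges weakly; tightness is supplied by compactness of $K_I$. This yields the identity for $t\in D$.

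For general $t\in I$ I pass to the limit $t_j\to t$ in the same identity. Here the time-dependence of the gluing $\otimes_{(t_j,x)}$ has to be handled. My first attempt would be the shift maps $b_h$, which map $\G$ to itself, to transport to a fixed gluing time, together with uniform continuity of paths on compact sets. Once the identity holds for every $(t,x)$, integrating in $\mu$ gives $M_t(\nnu)=\nnu$ for all $t$, which is the Markov property. Together with the continuous disintegrations in $\mathcal{X}_\mu$, this is the strong Markov property of Definition \ref{def_strong_markov}.
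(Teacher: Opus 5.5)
Your argument is correct in outline, but its central part takes a different route from the paper. The paper never argues time by time. It shows (Lemma \ref{lem_disint_map}) that the disintegration map $\delta$ has precompact image in $\mathcal{C}_c(\R;\mathcal{Z}_\mu)$, using equicontinuity in $t$ of $t\mapsto\mathcal{I}_\phi(\eeta_t)$ uniformly over $\mathcal{M}_\mu(\G)$, Ascoli, and countable Tychonoff. On the compact $K$ given by Markov regularity the $\mathcal{X}_\mu$- and $\mathcal{Z}_\mu$-uniformities coincide (Theorem \ref{thm_unique_uniform_structure_compact}), so this upgrades to precompactness of $\delta'(M_{D_N}(\eeta))$ in $\mathcal{C}_c(\R;\mathcal{X}_\mu)$ (Lemma \ref{lem_precompact}). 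Every limit is then identified with $\delta'(\bar\eeta)$ through the injection $\mathcal{X}_\mu\hookrightarrow\mathcal{Z}_\mu$. Finally, the identity at an arbitrary $t$ is obtained in a single diagonal step, with $t_k\in D_{N_k}$, $t_k\to t$, and Lemma \ref{lem_markov_limit}.

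You instead:
\begin{itemize}
\item use only pointwise-in-time compactness from $K_I$;
\item identify cluster points by testing $\int\Phi\,d\nnu=\int_Y\int\Phi\,d\nu_{t,x}\,\mu(dx)$ (the $x$-integrand is supported in the compact $e_t(\supp\Phi)$, so locally uniform convergence suffices);
\item prove the identity first for $t\in D$ via continuity of $M_{t,x}$, where your product-test-function argument plays the role of Lemma \ref{lem_markov_op_cont};
\item reach the remaining times with the same shift-equivariance device as Lemma \ref{lem_markov_limit}.
\end{itemize}
Your route is more elementary: no $\mathcal{Z}_\mu$, no Ascoli in time, no transfer of uniform structures. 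It also makes explicit the upgrade from $\mu$-a.e.\ $x$ to every $x\in\supp\mu$ through joint continuity in $(x,\nu)$, which the paper uses tacitly when it applies Lemma \ref{lem_markov_limit} at each $x\in K$. The paper's route buys convergence of the disintegrations locally uniformly in time, which puts $\bar\eeta$ in $\mathcal{Y}_\mu$ at once and avoids the two-stage limit.

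One step needs a different justification. Compactness of $K_I$ gives equicontinuity in $x$, not in $t$, so it does not by itself make $\lim_j\nu_{t_j,\cdot}$ exist. What does is uniqueness. Any cluster point of $(\nu_{t_j,\cdot})_j$ in $K_I$ is, by your integration and support argument, a continuous disintegration of $\nnu$ at time $t$. Two continuous disintegrations agree on all of $\supp\mu$, since they agree $\mu$-a.e.\ and nonempty relatively open subsets of $\supp\mu$ have positive measure. Hence the cluster point is unique and, $K_I$ being compact metrizable, the whole sequence converges.

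The same argument with arbitrary $t_j\to t$ in $I$ gives continuity of $t\mapsto\nu_{t,\cdot}$ in $\mathcal{X}_\mu$. You need this for $\nnu\in\mathcal{Y}_\mu$ in Definition \ref{def_strong_markov}. It also shows your diagonal extraction over $D\cap I$ is superfluous, since for each fixed $t$ the full sequence $\eta^k_{t,\cdot}$ already converges.
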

From now, we further endow $Y$ with the structure of a locally compact Polish group. Let now $\mu$ be the unique -- up to a multiplicative constant - left (resp. right) Haar measure on $Y$. We assume that the left (resp. right) action of $Y$ on $\G$ defined by $Y\times \G\ni (y,\g)\longmapsto \{y\cdot \g(t)\}_{t\in\R}\in \G$ is transitive, and define translation maps $l_y:\G\ni x\longmapsto y\cdot x\in \G$ (resp. $r_y$) for every $y\in  Y$. We will say that $\eeta$ is left invariant (resp. right invariant), if for every $y\in Y$, we have $(l_y)_\#\eeta=\eeta$ (resp. $(r_y)_\#\eeta=\eeta$). 
\begin{example}
	If $Y=\R^d$ and $\G$ is the space of $L$-Lipschitz paths for some $L>0$, then the Haar measure $\mu$ is the Lebesgue measure and the action $Y\times \G\rightarrow \G$ is well-defined, and transitive. Since $\R^d$ is commutative, we do not distinguish between left and right Haar measures or actions.
\end{example}
Let us now state the second result. 
\begin{theorem}\label{them_main_transl_inv}
	Assume that $\eeta$ is left invariant or right invariant. Then $\eeta$ is Markov regular. 
\end{theorem}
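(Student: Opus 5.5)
The idea is to use translation invariance to build, for each finite $F$ and each time $t$, a disintegration of $M_F(\eeta)$ that is completely determined by a single probability measure, namely the conditional law at the identity $1\in Y$. Throughout I treat the left invariant case; the right invariant case is symmetric.

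\emph{Step 1: invariance is preserved by Markovianisation.} First I show that $M_t$ commutes with the translations $l_y$. Since $(e_t)_\#\eeta=\mu$ and $\mu$ is left Haar, if $\{\eta_{t,x}\}$ disintegrates $\eeta$, then $\{(l_y)_\#\eta_{t,y^{-1}x}\}$ disintegrates $(l_y)_\#\eeta$. Restriction to half-lines commutes with $l_y$, and so does the glued tensor product $\otimes_{(t,x)}$ (gluing at $x$ becomes gluing at $yx$). Hence $(l_y)_\#M_t(\eeta)=M_t((l_y)_\#\eeta)=M_t(\eeta)$. By induction, $M_F(\eeta)$ is left invariant for every finite $F$, and $(e_s)_\#M_F(\eeta)=\mu$ for all $s$.

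\emph{Step 2: an equivariant continuous disintegration.} Fix a left invariant $\nnu$ with $(e_t)_\#\nnu=\mu$. Uniqueness of disintegration together with invariance gives $\nu_{t,yx}=(l_y)_\#\nu_{t,x}$ for $\mu$-a.e.\ $(x,y)$. By a Fubini argument I extract a single point $x_0$ where this holds for a.e.\ $y$. I then define
\[
\nu_{t,x}:=(l_{x x_0^{-1}})_\#\nu_{t,x_0}.
\]
This is again a disintegration, and it is supported on $\{\g(t)=x\}$. It is continuous in $x$ because the action $Y\times\G\to\G$ is continuous, so $x\mapsto(l_x)_\#\sigma$ is weak-star continuous. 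So every invariant $\nnu$ has a disintegration of the form $x\mapsto (l_x)_\#\sigma_t$, where $\sigma_t$ is a probability measure on $\G_{t,1}:=\{\g:\g(t)=1\}$.

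\emph{Step 3: compactness.} For compact $I$, the set $\{\g:\g(t)=1 \text{ for some } t\in I\}$ is compact by the standing hypothesis on $\G$. Hence the probability measures supported there form a weak-star compact set $P_I$ by Prokhorov/Banach--Alaoglu. Let
\[
K:=\{x\mapsto(l_x)_\#\sigma : \sigma\in P_I,\ \supp\sigma\subset\G_{t,1} \text{ for some } t\in I\}.
\]
I claim $K$ is compact in $\mathcal X_\mu$. The map $\Phi:\sigma\mapsto(x\mapsto (l_x)_\#\sigma)$ is continuous from $P_I$ into $C(\supp\mu;\mathcal P(\G))$ with the compact-open topology. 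This follows from joint continuity of $(x,\sigma)\mapsto (l_x)_\#\sigma$ on (compact)$\times P_I$. The constraint ``supported in $\G_{t,1}$ for some $t\in I$'' is closed, since $I$ is compact and evaluation is continuous. So $K$ is the continuous image of a compact set, and it contains the disintegrations from Step 2 of every $M_F(\eeta)$ at every $t\in I$. This proves that $\eeta$ is Markov regular.

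\emph{Main obstacle.} The delicate point is Step 2: turning a merely a.e.\ equivariant disintegration into an everywhere defined continuous one. One needs care because $\mu$-a.e.\ identities in $(x,y)$ must yield a good base point $x_0$, and the resulting family must still disintegrate $\nnu$. I would handle this by testing the candidate family against countably many test functions, using the invariance of $\mu$ under $y\mapsto yx_0^{-1}$. A secondary technical point is checking that the glued tensor product of Section \ref{subsec_tensor_product_through_map} is equivariant under $l_y$.
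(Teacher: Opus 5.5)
Your argument is correct and has the same skeleton as the paper's proof. First, Markovianisation preserves left invariance. The paper checks this through Lemma~\ref{lem_tensor_proj} (equivariance of $\otimes_{(t,x)}$ under $l_y$) together with left invariance of $\mu$, exactly as in your Step~1. Second, every invariant measure has a disintegration of the form $x\mapsto (l_x)_\#\sigma_t$ (Lemmas~\ref{lem_transl_inv_disint} and~\ref{lem_cont_group}). Third, the set of such disintegrations for $t\in I$ is relatively compact in $\mathcal{X}_\mu$.

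The differences lie in Steps~2 and~3.

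In Step~2, the paper's proof of Lemma~\ref{lem_transl_inv_disint} fixes $y$, obtains a $y$-dependent exceptional null set, and then substitutes $y=x$ and evaluates at the single point $e$. Your Fubini selection of a good base point $x_0$ is the rigorous version of this step. One small correction: left Haar measure is not invariant under $y\mapsto yx_0^{-1}$ unless $Y$ is unimodular. However, $\mu(Ex_0)=\Delta(x_0)\mu(E)$, so right translates of null sets are null, and that is all you need.

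In Step~3, the paper shows that $H^I$ is precompact via Ascoli's theorem. It proves equicontinuity of $x\mapsto (l_x)_\#\eta_{t,e}$ on compacts, uniformly in $\eta_{t,e}$, and pointwise relative compactness in $\mathcal{P}\bigl(\bigcup_{t\in I}e_t^{-1}(\{y\})\bigr)$, and then asserts that the closure of $H^I$ is compact. You instead exhibit $K$ directly as the continuous image of the compact parameter set $\{\sigma\in P_I:\supp\sigma\subset\G_{t,1}\text{ for some }t\in I\}$.

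Both routes use the same inputs: compactness from (Comp-$\G$) and continuity of the action, which you need in its joint form so that $\Phi\circ l_{x_n}\to\Phi\circ l_x$ uniformly on the compact set carrying the $\sigma$'s. Your route gives an honest compact subset of $\mathcal{X}_\mu$ without Ascoli. Your closedness check of the constraint ``supported in $\G_{t,1}$ for some $t\in I$'' is exactly what guarantees that limits stay in $\mathcal{X}_\mu$, a point the paper's Ascoli argument leaves implicit.
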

As a direct corollary of the two theorems above, we have the following. 
\begin{corollary}\label{cor_inv}
Assume that $\eeta$ is left or right invariant. Then the Markov hull of $\eeta$ is non-empty, and each element of the Markov hull of $\eeta$ satisfies the strong Markov property. 
\end{corollary}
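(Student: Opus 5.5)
The corollary follows by combining the two main results. I would first invoke Theorem \ref{them_main_transl_inv}. Assuming $\eeta$ is left invariant (the right invariant case is symmetric, with $r_y$ in place of $l_y$), it gives that $\eeta$ is Markov regular: for every compact $I\subset\R$ there is a compact $K\subset\mathcal{X}_\mu$ such that, for every finite $F\subset\R$ and every $t\in I$, some disintegration of $M_F(\eeta)$ with respect to $e_t$ and $\mu$ lies in $K$.

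Before applying the next result, I would check the standing hypotheses of Theorem \ref{thm_main_strong_markov}. We need $(e_t)_\#\eeta=\mu$ for every $t$, with $\mu$ now the left Haar measure. This is part of the setting fixed before both theorems. It is also consistent with left invariance: $(e_t)_\#\eeta$ is a left-invariant Radon measure on $Y$, because $e_t\circ l_y=y\cdot e_t(\cdot)$. Uniqueness of Haar measure then identifies it with $\mu$ up to a constant. Since the constant is independent of $t$ after normalisation, the invariance hypothesis holds in the form used in the paper.

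With Markov regularity established, Theorem \ref{thm_main_strong_markov} applies verbatim to $\eeta$. It yields both conclusions of the corollary: the Markov hull of $\eeta$ is non-empty, and each of its elements has the strong Markov property.

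The only point needing care is this compatibility of hypotheses: the group setting must be a special case of the general setting ($Y$ locally compact Polish, $\G$ satisfying the compactness and shift conditions, $\mu$ Radon). A locally compact Polish group is in particular a locally compact Polish space, and Haar measure is Radon, so no additional argument is needed.
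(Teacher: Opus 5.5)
Your proposal is correct and follows the paper's route exactly: the paper presents the corollary as a direct consequence of Theorem~\ref{them_main_transl_inv} (left or right invariance gives Markov regularity) combined with Theorem~\ref{thm_main_strong_markov}. One aside is inaccurate but harmless: left invariance alone only yields $(e_t)_\#\eeta=c(t)\mu$, and for non-unimodular $Y$ the factor $c(t)$ can genuinely vary with $t$ (e.g.\ in the $ax+b$ group, $\eeta=\int_Y\delta_{x\cdot\g_0}\,\mu(dx)$ with $\g_0(t)=(e^t,0)$ gives $(e_t)_\#\eeta=e^{t}\mu$), so the identity $(e_t)_\#\eeta=\mu$ must come from the standing hypotheses, as you first say, rather than being derived from invariance.
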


\subsubsection{Further questions} 
Let us ask the following questions. 
\begin{question}
	What is a necessary and sufficient condition on $\eeta$ such that its Markov hull consists of exactly one element?
\end{question}
\begin{question}
	What is a necessary and sufficient condition on $\eeta$ such that every element in its Markov hull satisfies the strong Markov property? 
\end{question}

\subsection{Physical interpretation} Let us propose the following dictionary to interpret Corollary \ref{cor_inv} in the language of physics. We translate $\mu$-invariant Radon measure on the space of continuous paths to realisation of incompressible Lagrangian turbulence; we translate translation invariant to homogenous; we translate Markov property to memoryless; and finally an element of the Markov hull is translated to a model.

 We then loosely rephrase Corollary \ref{cor_inv} as 
`every translation invariant and $\mu$-invariant Radon measure on the space of continuous paths has each element of its (non-empty) Markov hull satisfying the Markov property.' Following the dictionary proposed above, this translates to the following. 

\bigskip 

\centerline{\emph{Every realisation of incompressible, homogeneous Lagrangian turbulence admits a memoryless model.}}

\bigskip

\subsection{Outline of ideas}
We now present the main ideas to prove Theorem \ref{thm_main_strong_markov}.
\subsubsection{Uniform spaces} Our proof uses the theory uniform spaces introduced by Andr\'e Weil \cite{andre_weil_uniform} and subsequently developed by Nicolas Bourbaki \cite{bourbaki_topology}. 
Let us motivate the concept of a uniform spaces.

 A sequence of pairs of points in a metric space will be called uniformly close, if there is a finite upper bound to the distance between each pair of points. Uniform closeness of a sequence of pairs of points is a property invariant under uniform equivalence of the metric. It is however not expressible solely in terms of the topology structure induced by the metric -- that in terms of the family of open balls induced by the metric. In other words, sequences of pairs of points in a metric space which are uniformly close cannot be distinguished from those which are not solely by the topology induced by the metric. 

As a topology, a uniformity is expressed in a set-theoretic language, wherein contrary to a metric space structure, no reference is made to the real line. However, as an enrichement over a topology, a uniformity induced by a metric distinguishes sequences of pairs of points which are uniformly close from those which are not. In other words, a uniformity is sufficiently rich to express the property of uniform closeness of a sequence of pairs of points. Concepts such as uniform continuity or equicontinuity can thus be defined for maps from a topological space into a uniform space.
In particular,  Ascoli's compactness criterion for the uniform topology on real-valued continuous functions can be generalised to functions valued in uniform space. 
\bigskip
\subsubsection{Tensor product of measures through a map and Markovianisation}
To define the Markovianisation of a measure on the space of paths valued in a locally compact Polish space, we will first define the tensor product of two measures through a map, and give a characterising property of this object. Given a measure $\eta_{t,x}$ concentrated on a set of paths, which are all valued at $x\in Y$ for some time $t\in\R$, we will define its Markovianisation at time $t$ as the tensor product $\eta^{(-\infty,t]}_{t,x}\otimes_{(t,x)}\eta_{t,x}^{[t,+\infty)}$, which is the unique probability measure on the space of continuous, whose distribution before time $t$ coincides with that $\eeta$, whose disitribution after time $t$ coincides with that of $\eeta$, but -- contrary to $\eeta$ -- the distribution before and after time $t$ are independant. 

Given now a measure $\eeta$ on the space of paths, whose one-marginal at time $t\in \R$ is the positive Radon measure $\mu$ on $Y$, denoting by $\{\eta_{t,x}\}_{x\in Y}$ a disintegration of $\eeta$ with respect to $e_t$ and $\mu$, we then define its Markovianisation $M_t(\eeta)$ at time $t$ as the following integral
\begin{equation}
M_t(\eeta)=\int_Y \eta^{(-\infty,t]}_{t,x}\otimes_{(t,x)}\eta_{t,x}^{[t,+\infty)}\mu(dx).
\end{equation}
Whenever $\eeta=M_t(\eeta)$, we will say that $\eeta$ satisfies the Markov property at time $t\in\R$. 
We will then show that successive Markovianistion of $\eeta$ over a finite set of times does not depend on the ordering of these times, thereby showing that Markovianisation over a finite set of times is well-defined. To do so, we will have to prove properties of bilinearity and associativity of the tensor product of measures. We will then define the Markov hull of a $\mu$-invariant measure $\eeta$ to be the set of all accumulation points -- in the weak-star topology on Radon measures -- of sequences of successive Markovianisation of $\eeta$ over any countable and dense subset of $\R$. The Markov property however does not pass into the weak-star limit. 
\bigskip
\subsubsection{The space of disintegrations $\mathcal{Z}_\mu$ with respect to $\mu$ and the disintegration map}
We define the set of maps from $\supp\mu$ into probability measures on the space of paths, for which there exists $t\in\R$ such that for every $x\in \supp\mu$ the image of $x$ through such a map is concentrated on paths whose value at time $t$ is $x$. We then identity such maps which coincide $\mu$-a.e. and form the set of equivalence classes $\mathcal{Z}_\mu$. On this set, we will define integrals, which consist in averaging the values of a (equivalence class) map $\nnu\in \mathcal{Z}_\mu$ with respect to a class of nonnegative and continuous probability density on $\supp\mu$. We then endow $\mathcal{Z}_\mu$ with the initial uniformity with respect to all such integrals, which is the coarsest uniformity on $\mathcal{Z}_\mu$ making all these integrals uniformly continuous. 
We further consider the topological space $\mathcal{C}_c(\R;\mathcal{Z}_\mu)$ consisting of continuous maps from $\R$ into $\mathcal{Z}_\mu$ endowed with the topology of compact convergence, and observe that precompact sets are characterised by Ascoli's theorem.

We then define the disintegration map, mapping a $\mu$-invariant measure $\eeta$ on the space of continuous paths to a continuous map from $\R$ into $\mathcal{Z}_\mu$, which sends every $t\in\R$ to be the equivalence class of disintegrations (restricted to $\supp\mu$) of $\eeta$ with respect to $e_t$ and $\mu$. We will show that the disintegration map has precompact image in $\mathcal{C}_c(\R;\mathcal{Z}_\mu)$ using Tychonov's theorem for countable products (which can be proved in the Zermalo-Fraenkel and Axiom of Dependant Choice axiomatisation of set theory as shown in \cite[Proposition 4.72]{herrlich_axiom_of_choice}), thus enriching the information on the convergence to elements of the Markov hull. However this will still not be sufficient in order pass into the limit in the Markov property.
\bigskip 
\subsubsection{The space of continuous disintegrations $\mathcal{X}_\mu$ with respect to $\mu$}
We define the set $\mathcal{X}_\mu$ of continuous maps from $\supp\mu$ into probability measures on the space of paths (endowed with the vague uniformity inducing the weak-star topology), for which there exists $t\in\R$ such that for every $x\in \supp\mu$ the image of $x$ through such a map is concentrated on paths whose value at time $t$ is $x$. We endow this set with the uniform structure of compact convergence, and show that the canonical injection of $\mathcal{X}_\mu$ into $\mathcal{Z}_\mu$ is uniformly continuous. 

Assuming now that $\eeta$ is \emph{Markov regular} will ensure that disintegrations of $\eeta$ with respect to $\mu$ of successive Markovianisations of $\eeta$ are always contained in some compact subset of $\mathcal{X}_\mu$. Since on a compact space, there exists a unique uniformity compatible with the topology, we will deduce that the image through the disintegration map of any successive Markovianisations of $\eeta$ is precompact in the topological space $\mathcal{C}_c(\R;\mathcal{X}_\mu)$ of continuous maps from $\R$ to $\mathcal{X}_\mu$ endowed with the topology of compact convergence. 

Therefore, we will be able to upgrade the convergence to an element $\bar\eeta$ in the Markov hull: for every $t\in\R$, we will have that the unique continuous disintegration (restricted to $\supp\mu$) with respect to $e_t$ and $\mu$ of the successive Markovianisations of $\eeta$ converges in $\mathcal{X}_\mu$ to the unique continuous disintegration of $\bar\eeta$ with respect to $e_t$ and $\mu$. This convergence will be strong enough to pass into the limit in the Markov property, and to further propagate the Markov property to all times (not only the dense and countable set of times of successive Markovianisation). It will in fact be shown that $\bar\eeta$ satisfies the \emph{strong Markov property}. 
\bigskip
\subsubsection{Translation invariant measures}
We now consider the set of (left or right) translation invariant measures on a space of continuous paths valued in a locally compact Polish group. We will show that for any compact time interval $I$ in $\R$, and there exists a compact set $K$ in $\mathcal{X}_\mu$ such that for any translation invariant $\eeta,$  there exists a disintegration $\{\eta_{t,x}\}_{x\in\supp\mu}$ of $\eeta$ with respect to $e_t$ and $\mu$, which belongs to $K$. We will further show that the set of translation invariant measures is invariant by Markovianisation, thereby showing that all its elements are Markov regular. 
\subsection{Organisation of this paper}
In Section \ref{sec_uniform}, we collect definitions and facts on uniform structures and the induced topology. Firstly we will give basic definitions and facts. Secondly we will present the key concept of an initial uniformity, and the uniformity induced by a family of pseudometrics. Thirdly we will state that on a compact space, there exists exactly one uniformity compatible with the topology. This result will prove to be essential.  Fourthly, we will show how to endow a function space with different uniformities such as the uniformity of uniform convergence, the uniformity of $\mathfrak{S}$-convergence, or the uniformity of compact convergence. Finally, we will restrict these uniformities to spaces of continuous functions and state a generalisation of Ascoli's compact criterion. 

In Section \ref{sec_measure_theory_top_space}, we gather some facts on Radon measures, and on the uniformity of vague convergence on Radon measures (inducing the weak-star topology on Radon measures). We will also present the disintegration of a measure, which will be useful. We will further present the space of paths $\G$ over which we will work and show that it is locally compact and Polish. We will then introduce the space $\mathcal{M}_\mu(\G)$ of $\mu$-invariant Radon measures on $\G$, and we will show that the vague uniformity on $\mathcal{M}_\mu(\G)$ is metrizable and compact. Finally we will describe the uniformity of vague convergence on Radon probability measures on $\G$.  

In Section \ref{sec_tensor_prod_measure}, we present the tensor product of measures. In a general measure-theoretic context, we will construct the tensor product of two measures through a map, of which we will give a characterising property. We will then show that this tensor product is well-behaved with respect to composition of maps. 

In Section \ref{sec_markovianisation}, using the tensor product presented in the the previous section, we explain the process of Markovianisation at some time $t\in\R$ of a measure in $\mathcal{M}_\mu(\G)$ to another measure in $\mathcal{M}_\mu(\G)$, and we show that successive Markovianisation gives a measure in $\mathcal{M}_\mu(\G)$, which does not depend on the ordering of times of Markovianisation. In doing so, we will prove properties of the tensor product of measures such as bilinearity and associativity. 

In Section \ref{sec_disint_map}, we introduce the spaces of disintegrations $\mathcal{X}_\mu$ and $\mathcal{Z}_\mu$ with respect $\mu$, and introduce the disintegration map from a measure in $\mathcal{M}_\mu(\G)$ to a continuous map from $\R$ to $\mathcal{Z}_\mu$. We will collect useful property of this map, study the relation between $\mathcal{X}_\mu$ and $\mathcal{Z}_\mu$, and show that the Markov property can propagate to limits in a simplified setting. Upon expliciting the definitions of Markov regularity and Markov hull, we will then be able to conclude the proof of Theorem \ref{thm_main_strong_markov}. 

In Section \ref{sec_translation_inv}, we will give the proof of Theorem \ref{them_main_transl_inv}. 
\subsection*{Acknowledgements} 
I thank the Gran Sasso Science Institute for providing excellent working conditions. I thank Stefano Bianchini for discussions from which this article originates. 
	\section{Uniform structures and topology}\label{sec_uniform}
	In this section, we state definitions and basic properties of uniform structures without proofs. A detailed presentation can be found in \cite{bourbaki_topology}. 
\subsection{Neighbourhoods}
We recall the notion of neighborhood of a point from \cite[Chapter I]{bourbaki_topology}, and that a system of neighborhoods of every point uniquely specifies a topology. 
\begin{definition}
	Let $X$ be a topological space and $A$ any subset of $X$. A neighbourhood of $A$ is any subset of $X$ which contains an open set containing $A$. The neighborhoods of a subset $\{x\}$ consisting of a single point are also called neighbourhoods of the point $x$. 
\end{definition}
Let us denote by $\mathfrak{B}(x)$ the set of all neighbourhoods of $x$. The set $\mathfrak{B}(x)$ have the following properties:
\begin{itemize}
	\item [$(V_1)$] Every subset of $X$ which contains a set belonging to $\mathfrak{B}(x)$ itself belongs to $\mathfrak{B}(x).$
	\item [$(V_2)$] Every finite intersection of sets of $\mathfrak{B}(x)$ belongs to $\mathfrak{B}(x).$
	\item [$(V_3)$] The element $x$ is in every set of $\mathfrak{B}(x)$. 
	\item [$(V_4)$] If $V$ belongs ot $\mathfrak{B}(x)$, then there exists a set $W$ belonging to $\mathfrak{B}(x)$ such that for every $y\in W$, $V$ belongs to $\mathfrak{B}(y)$. 
\end{itemize}
\begin{proposition}
	If to each element $x$ of a set $X$, there corresponds a set $\mathfrak{B}(x)$ of subsets of $X$ such that the properties $(V_1), (V_2), (V_3)$ and $(V_4)$ are satisfied, then there is a unique topological structure on $X$ such that, for every $x\in X$, $\mathfrak{B}(x)$ is the set of neighbourhoods of $x$ in this topology. 
\end{proposition}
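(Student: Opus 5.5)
We must prove the last statement: given an assignment $x\mapsto\mathfrak{B}(x)$ satisfying $(V_1)$--$(V_4)$, there is a unique topology on $X$ whose neighbourhood system at each $x$ is exactly $\mathfrak{B}(x)$. The plan is to write down the only possible candidate for the open sets, check it is a topology, and then identify its neighbourhoods with $\mathfrak{B}(x)$.

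\emph{Step 1: the candidate.} Define $\mathcal{O}$ to be the collection of all $U\subset X$ such that $U\in\mathfrak{B}(x)$ for every $x\in U$. Uniqueness is then immediate. In any topology, a set is open if and only if it is a neighbourhood of each of its points. So if a topology has $\mathfrak{B}(x)$ as its neighbourhood system at every $x$, its open sets must be exactly $\mathcal{O}$.

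\emph{Step 2: $\mathcal{O}$ is a topology.}
\begin{itemize}
\item The empty set belongs to $\mathcal{O}$ vacuously.
\item $X\in\mathcal{O}$ by $(V_1)$, provided each $\mathfrak{B}(x)$ is nonempty. This is implicit in the statement; alternatively, $(V_2)$ applied to the empty intersection gives $X\in\mathfrak{B}(x)$.
\item Arbitrary unions lie in $\mathcal{O}$ by $(V_1)$. If $x\in\bigcup_i U_i$, then $x\in U_j$ for some $j$, so $U_j\in\mathfrak{B}(x)$, and the union contains $U_j$.
\item Finite intersections lie in $\mathcal{O}$ by $(V_2)$. If $x\in U_1\cap\dots\cap U_n$, each $U_k$ belongs to $\mathfrak{B}(x)$, hence so does their intersection.
\end{itemize}

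\emph{Step 3: the neighbourhoods of $x$ for $\mathcal{O}$ are exactly $\mathfrak{B}(x)$.} This is the main obstacle, and it is where $(V_4)$ is used.

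One inclusion is easy. If $N$ is an $\mathcal{O}$-neighbourhood of $x$, it contains some $U\in\mathcal{O}$ with $x\in U$. Then $U\in\mathfrak{B}(x)$ by definition of $\mathcal{O}$, so $N\in\mathfrak{B}(x)$ by $(V_1)$.

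For the converse, take $V\in\mathfrak{B}(x)$ and set
\[
U=\{y\in X: V\in\mathfrak{B}(y)\}.
\]
Then $x\in U$, and $U\subset V$ by $(V_3)$. It remains to show $U\in\mathcal{O}$, i.e.\ $U\in\mathfrak{B}(y)$ for each $y\in U$.
\begin{itemize}
\item Fix $y\in U$, so $V\in\mathfrak{B}(y)$.
\item By $(V_4)$ there is $W\in\mathfrak{B}(y)$ such that $V\in\mathfrak{B}(z)$ for every $z\in W$.
\item This says precisely that $W\subset U$.
\item Hence $U\in\mathfrak{B}(y)$ by $(V_1)$.
\end{itemize}
So $U$ is open, $x\in U\subset V$, and $V$ is an $\mathcal{O}$-neighbourhood of $x$. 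This completes the identification, and with it the existence part; uniqueness was shown in Step 1.
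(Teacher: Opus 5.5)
Your proof is correct and complete. The paper gives no proof of this proposition (it is quoted from Bourbaki, \emph{General Topology}, Chapter I, \S1.2), and your argument is the standard one found there: take the open sets to be those $U$ with $U\in\mathfrak{B}(x)$ for all $x\in U$, and use $(V_3)$ and $(V_4)$ to show that $\{y: V\in\mathfrak{B}(y)\}$ is an open set inside $V$. You were also right to note that $X\in\mathfrak{B}(x)$ needs either $\mathfrak{B}(x)\neq\emptyset$ or the empty-intersection convention in $(V_2)$; without one of these the statement, as literally written, would fail.
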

\subsection{Uniformities} 
We give definitions and elementary properties of uniform structures as in \cite[Chapter II]{bourbaki_topology}. First let us recall the definition of a filter. 
\begin{definition} 
	A filter on a set $X$ is a family of subsets of $X$  satisfying the following. 
	\begin{itemize}
		\item [($F_1)$] Every subset of $X$ which contains a set of $\mathcal{F}$ belongs to $\mathcal{F}$.
		\item [($F_2)$] Every finite intersection of sets of $\mathcal{F}$ belongs to $\mathcal{F}$.
		\item [$(F_3)$] The empty set is not in $\mathcal{F}$. 
	\end{itemize}
\end{definition} 
\begin{remark}
	In a topological space space $X$, the set of all neighbourhoods of an arbitrary non-empty subset $A$ of $X$ (and in particular the set of all neighbourhoods of a point of $x$) is a filter, called the neighbourhood filter of $A.$
\end{remark}
We can now give the definition of a uniform structure. 
\begin{definition} \label{def_uniform_structure}
	A uniform structure (or uniformity) on a set $X$ is a structure given by a family of subsets $\mathcal{U}$ of $X\times X$ which satisfy $(F_1)$ and $(F_2)$ and the following axioms:
	\begin{itemize}
		\item [$(U_1)$] Every set belonging to $\mathcal{U}$ contains the diagonal $\D$.
		\item [$(U_2)$] If $V\in \mathcal{U}$ then $\overset{- 1}{V}\in \mathcal{U}$. 
		\item[$(U_3)$] For every $V\in \mathcal{U}$ there exists $W\in \mathcal{U}$ such that $W\circ W\subset V.$
	\end{itemize}
\end{definition} 
The sets in $\mathcal{U}$ are called \emph{entourages} of the uniformity defined on $X$. 
\begin{remark}
	The composite of two subsets $U$ and $V$ of $X\times X$ is defined by 
	\begin{equation}
	U\circ V =\{ (x,z)\;:\; \text{there exists $y\in X$ such that $(x,y)\in U$ and $(y,z)\in V$}\}. 
	\end{equation}
	The inverse of $V$ is defined by 
	\begin{equation}
	\overset{- 1}{V}=\{(y,x)\in X\times X\;:\; (x,y)\in V\}. 
	\end{equation}
\end{remark}
A uniformity can be uniquely determined by a subfamily of entourages. 
\begin{definition}
	A fundamental system of entourages of a uniformity is any set $\mathfrak{B}$ of entourages such that every entourage contains a set belonging to $\mathfrak{B}.$
\end{definition}

Given a subset $V$ of $X\times X$ and $x\in X$, we denote by $V(x)$ the set of all $y\in X$ such that $(x,y)\in V$. 
\begin{proposition}
	Let $X$ be a set endowed with a uniform structure $\mathcal{U}$, and for every $x\in X$, let $\mathfrak{B}(x)$ be the set of subsets of $V(x)$ of $X$, where $V$ runs through the set of entourages of $\mathcal{U}$. Then there exists a unique topology on $X$ such that, for every $x\in X$, $\mathfrak{B}(x)$ is the neighbourhood filter of $x$ in this topology. 
\end{proposition}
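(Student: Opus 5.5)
We define the topology by declaring a subset $O\subset X$ open if and only if for every $x\in O$ there is an entourage $V\in\mathcal{U}$ with $V(x)\subset O$. The strategy is to reduce to the preceding proposition on neighbourhood systems. We check that the families $\mathfrak{B}(x)$ satisfy $(V_1)$--$(V_4)$; that proposition then gives existence and uniqueness of a topology whose neighbourhood filters are exactly the $\mathfrak{B}(x)$.

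We verify the axioms in order.

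For $(V_1)$, by definition $\mathfrak{B}(x)$ is the family of subsets of $X$ containing some $V(x)$. It is therefore closed under taking supersets.

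For $(V_2)$, take $V,W\in\mathcal{U}$. Then $V(x)\cap W(x)=(V\cap W)(x)$, and $V\cap W\in\mathcal{U}$ by $(F_2)$. Hence finite intersections of members of $\mathfrak{B}(x)$ stay in $\mathfrak{B}(x)$.

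For $(V_3)$, axiom $(U_1)$ gives $(x,x)\in V$ for every entourage $V$. So $x\in V(x)$, and hence $x$ lies in every member of $\mathfrak{B}(x)$.

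The only step with content is $(V_4)$, and it is where $(U_3)$ enters. Let $U\in\mathfrak{B}(x)$, so $U\supset V(x)$ for some $V\in\mathcal{U}$. By $(U_3)$ choose $W\in\mathcal{U}$ with $W\circ W\subset V$, and take $W(x)\in\mathfrak{B}(x)$ as the required neighbourhood. For $y\in W(x)$ and $z\in W(y)$ we have $(x,y)\in W$ and $(y,z)\in W$. Hence $(x,z)\in W\circ W\subset V$, so $z\in V(x)\subset U$. This shows $W(y)\subset U$, i.e. $U\in\mathfrak{B}(y)$ for every $y\in W(x)$.

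Note that $\mathfrak{B}(x)$ is read as the family of supersets of the sets $V(x)$; this is what makes $(V_1)$ hold. With all four axioms verified, the preceding proposition yields a unique topology with $\mathfrak{B}(x)$ as neighbourhood filter at each point. This topology coincides with the one described at the outset. No real obstacle remains beyond the composition argument in $(V_4)$.
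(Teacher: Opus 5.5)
Your argument is correct and matches the standard proof in Bourbaki's General Topology (Chapter II), which the paper cites without giving a proof: you verify $(V_1)$--$(V_4)$, with $(U_1)$ giving $(V_3)$ and $(U_3)$ giving $(V_4)$ via $W(y)\subset V(x)$ for $y\in W(x)$. Your reading of $\mathfrak{B}(x)$ as the supersets of the sets $V(x)$ agrees with Bourbaki's reading $\{V(x):V\in\mathcal{U}\}$, because any $M\supset V(x)$ equals $W(x)$ for the entourage $W=V\cup(\{x\}\times M)$ (an entourage by $(F_1)$); so $(V_1)$ and the last step of your $(V_4)$ hold under either reading.
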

This is topology is called \emph{the topology induced by the uniformity} $\mathcal{U}$. 
Finally let us state a criterion for a uniform space to be Hausdorff. 
\begin{theorem}
	A uniform space $X$ is Hausdorff if and only if the intersection of all the entourages of its uniform structure is the diagonal $\D$ of $X\times X$.
\end{theorem}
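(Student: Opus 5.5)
The plan is to prove the two implications separately, using only the axioms $(U_1)$--$(U_3)$ and the description of the induced topology: the neighbourhoods of $x$ are the sets containing some $V(x)$ with $V$ an entourage.

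\emph{Hausdorff implies the intersection is the diagonal.} By $(U_1)$ every entourage contains $\D$, so only the reverse inclusion needs proof. Let $(x,y)\notin\D$, so $x\neq y$. By the Hausdorff property there are neighbourhoods of $x$ and $y$ that are disjoint. By the description of neighbourhoods, we may take them to be $V(x)$ and $W(y)$ for entourages $V,W$. Set $U=V\cap W$, which is an entourage by $(F_2)$. Then $U(x)\cap U(y)=\emptyset$. Since $y\in U(y)$ by $(U_1)$, it follows that $y\notin U(x)$, that is, $(x,y)\notin U$. Hence $(x,y)$ is not in the intersection of all entourages.

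\emph{The intersection is the diagonal implies Hausdorff.} Let $x\neq y$. Then $(x,y)\notin\D$, so some entourage $V$ satisfies $(x,y)\notin V$. By $(U_3)$ choose an entourage $W$ with $W\circ W\subset V$. Replace $W$ by $W\cap \overset{-1}{W}$; this is still an entourage by $(U_2)$ and $(F_2)$, it is symmetric, and it still satisfies $W\circ W\subset V$. The claim is that $W(x)$ and $W(y)$ are disjoint neighbourhoods of $x$ and $y$. Suppose $z\in W(x)\cap W(y)$. Then $(x,z)\in W$, and $(y,z)\in W$, so $(z,y)\in W$ by symmetry. Hence $(x,y)\in W\circ W\subset V$, which is a contradiction.

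The only step needing any care is this symmetrisation in the second implication: we need $(U_2)$ so that $(y,z)\in W$ can be reversed into $(z,y)\in W$. The remaining steps are direct unwinding of the definitions, so I expect no real obstacle.
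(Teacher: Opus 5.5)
Your proof is correct in both directions. In the first direction the auxiliary entourage $U=V\cap W$ is not needed: $y\in W(y)$ and $W(y)\cap V(x)=\emptyset$ already give $(x,y)\notin V$. The paper gives no proof of this statement; it is one of the background facts on uniform spaces quoted without proof from Bourbaki. Your argument is the standard proof of it: separate $x\neq y$ by an entourage $V$ with $(x,y)\notin V$, then take a symmetric entourage $W$ with $W\circ W\subset V$, so that $W(x)$ and $W(y)$ are disjoint neighbourhoods.
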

\subsection{Uniformly continuous functions} 
We now define morphisms between uniform spaces. These are uniformly continuous functions.
\begin{definition}
	A mapping $f$ of a uniform space $X$ into a uniform space $X'$ is said to be uniformly continuous, if for every entourage $V'$of $X'$, there exists an entourage $V$ of $X$ such that $(x,y)\in V$ implies $(f(x),f(y))\in V'$. 
\end{definition}
Note that every uniformly continuous mapping is continuous. 
Let us now define isomorphism of uniform spaces. 
\begin{definition}
	A mapping $f$ of a uniform space $X$ into a uniform space $X'$ is said to be an isomorphism of uniform spaces, if $f$ is bijective, and both $f$ and $f^{-1}$ are uniformly continuous. 
\end{definition}
\subsection{Comparison of uniformities} 
The class of uniform structures on a given set is a partially ordered set. Let us define the order relation. 
\begin{definition}
	If $\mathcal{U}_1$ and $\mathcal{U}_2$ are two uniform structures on the same set $X$, $\mathcal{U}_1$ is said to be finer than $\mathcal{U}_2$ if, denoting by $X_i$ the set $X$ with the uniform structure $\mathcal{U}_i$ ($i=1,2$), the identity mapping $X_1\to X_2$ is uniformly continuous. 
\end{definition}
The following proposition asserts that a finer uniformity contain more entourages. 
\begin{proposition}
	If $\mathcal{U}_1$ and $\mathcal{U}_2$ are two uniformities on a set $X$, then $\mathcal{U}_1$ is finer than $\mathcal{U}_2$, if and only if every entourage $\mathcal{U}_2$ is an entourage of $\mathcal{U}_1$. 
\end{proposition}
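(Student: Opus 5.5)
Both implications follow by unwinding the definition of uniform continuity for the identity map $\mathrm{id}\colon X_1\to X_2$, where $X_i$ denotes $X$ equipped with $\mathcal{U}_i$. The only other ingredient is the filter axiom $(F_1)$ for entourages.

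First, suppose $\mathcal{U}_1$ is finer than $\mathcal{U}_2$, and let $V'\in\mathcal{U}_2$. Since $\mathrm{id}$ is uniformly continuous, there is an entourage $V\in\mathcal{U}_1$ such that $(x,y)\in V$ implies $(\mathrm{id}(x),\mathrm{id}(y))=(x,y)\in V'$. In other words, $V\subset V'$. Because $\mathcal{U}_1$ satisfies $(F_1)$, every subset of $X\times X$ containing a member of $\mathcal{U}_1$ is itself in $\mathcal{U}_1$. Hence $V'\in\mathcal{U}_1$.

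Conversely, suppose $\mathcal{U}_2\subset\mathcal{U}_1$, and let $V'$ be an entourage of $X_2$. Take $V:=V'$, which lies in $\mathcal{U}_1$ by assumption. For $(x,y)\in V$ we trivially have $(\mathrm{id}(x),\mathrm{id}(y))=(x,y)\in V'$. So $\mathrm{id}\colon X_1\to X_2$ is uniformly continuous, which by definition means $\mathcal{U}_1$ is finer than $\mathcal{U}_2$.

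Neither direction presents a real obstacle. The one point to get right is in the first direction: uniform continuity only provides some $V\in\mathcal{U}_1$ with $V\subset V'$, and the upward-closure axiom $(F_1)$ is what upgrades this to $V'\in\mathcal{U}_1$ itself.
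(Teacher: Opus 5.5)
Your proof is correct: unwinding uniform continuity of the identity $X_1\to X_2$ gives $V\subset V'$ with $V\in\mathcal{U}_1$, and the upward-closure axiom $(F_1)$ then yields $V'\in\mathcal{U}_1$; the converse is immediate by taking $V=V'$. The paper gives no proof of its own here, since it lists this among the basic facts quoted without proof from Bourbaki, and your argument is the standard one found there.
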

The following shows that the corresponding order relation induced between topologies coincides with the usual order relation between topologies. 
\begin{corollary}
	Let $\mathcal{U}_1$ and $\mathcal{U}_2$ be two uniformities on a set $X$, and suppose that $\mathcal{U}_1$ is finer than $\mathcal{U}_2$; then the topology induced by $\mathcal{U}_1$ is finer than the topology induced by $\mathcal{U}_2$.  
\end{corollary}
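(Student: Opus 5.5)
The statement to prove is the last corollary: if $\mathcal{U}_1$ is finer than $\mathcal{U}_2$, then the topology induced by $\mathcal{U}_1$ is finer than the topology induced by $\mathcal{U}_2$. I will argue through neighbourhood filters, using the proposition just stated that a finer uniformity has more entourages, together with the description of the induced topology by the filters $\mathfrak{B}(x)$.

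First, fix $x\in X$. Let $\mathfrak{B}_i(x)$ denote the neighbourhood filter of $x$ for the topology induced by $\mathcal{U}_i$. By the proposition describing the induced topology, $\mathfrak{B}_i(x)$ consists of the subsets of $X$ that contain a set $V(x)$ with $V\in\mathcal{U}_i$. Now take $N\in\mathfrak{B}_2(x)$, so $N\supset V(x)$ for some $V\in\mathcal{U}_2$. Since $\mathcal{U}_1$ is finer than $\mathcal{U}_2$, the preceding proposition gives $V\in\mathcal{U}_1$. Hence $N\in\mathfrak{B}_1(x)$, and so $\mathfrak{B}_2(x)\subset\mathfrak{B}_1(x)$ for every $x$.

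Second, I pass from neighbourhoods to open sets. A set $O$ is open in a topology exactly when it is a neighbourhood of each of its points. So if $O$ is open for the $\mathcal{U}_2$-topology, then $O\in\mathfrak{B}_2(x)\subset\mathfrak{B}_1(x)$ for all $x\in O$. Thus $O$ is open for the $\mathcal{U}_1$-topology, which is the claim.

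A second route is to note that the identity $X_1\to X_2$ is uniformly continuous by definition of ``finer'', hence continuous since uniformly continuous maps are continuous. Continuity of the identity is exactly the statement that the topology of $X_1$ is finer than that of $X_2$. There is no real obstacle here. The only point needing care is the characterisation of open sets as sets that are neighbourhoods of each of their points, which is standard in the neighbourhood-system description of a topology recalled at the start of the section.
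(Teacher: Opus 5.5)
Your argument is correct and matches the intended derivation. The paper states this corollary without proof, quoting Bourbaki, and places it right after the proposition that a finer uniformity has more entourages and the remark that uniformly continuous maps are continuous; these are exactly the ingredients of your two routes. You read $\mathfrak{B}(x)$ as the sets containing some $V(x)$ with $V\in\mathcal{U}_i$, which is what the paper's wording of that proposition intends, so the inclusion $\mathfrak{B}_2(x)\subset\mathfrak{B}_1(x)$ and the passage to open sets go through as written.
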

\subsection{Initial uniformities} 
Let us now define the initial uniformity on a set with respect to a family maps. This will be concept will be very useful. 
\begin{proposition} \label{prop_intial}
	Let $X$ be a set, let $\{Y_i\}_{i\in I}$ be a family of uniform spaces, and for every $i\in I$, let $f_i$ be a mapping from $X$ to $Y_i$. For each $i\in I$, let $g_i$ denote $f_i\times f_i$. Let $\Theta$ be the family of subsets of $X\times X$ of the form $g^{-1}_i(V_i)$, where $i\in I$ and $V_i$ is an entourage of $Y_i$, and let $\mathfrak{B}$ be the set of all finite intersections 
	\begin{equation}
g_1^{-1}(V_{1})\cap \dots \cap g^{-1}_{n}(V_{n}).
	\end{equation}
	of $\Theta.$ Then $\mathfrak{B}$ is a fundamental system of entourages of a uniformity $\mathcal{U}$ on $X$ which is the initial uniform structure on $X$ with respect to the family $\{f_i\}_{i\in I}$, and in particular $\mathcal{U}$ is the coarsest uniformity on $X$ for which all mappings $f_i$ are uniformly continuous. Furthermore, let $h$ be a mapping of a uniform space $Z$ into $X$; then $h$ is uniformly continuous if and only if the mappings $f_i\circ h$ are uniformly continous. 
\end{proposition}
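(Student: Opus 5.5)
To prove Proposition \ref{prop_intial}, I would first check that $\mathfrak{B}$ is a fundamental system of entourages of some uniformity. Let $\mathcal{U}$ be the family of all subsets of $X\times X$ containing a member of $\mathfrak{B}$.

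Axioms $(F_1)$ and $(F_2)$ hold by construction, since $\mathfrak{B}$ is closed under finite intersections.

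For $(U_1)$: each $V_i$ contains the diagonal of $Y_i$. Since $g_i$ maps the diagonal $\D$ of $X$ into that diagonal, we get $\D\subset g_i^{-1}(V_i)$, and so $\D$ lies in every finite intersection.

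For $(U_2)$: we have $\overset{-1}{g_i^{-1}(V_i)}=g_i^{-1}(\overset{-1}{V_i})$, and $\overset{-1}{V_i}$ is an entourage of $Y_i$. Inversion commutes with intersections, so $(U_2)$ follows.

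For $(U_3)$: given $V=\bigcap_{k=1}^n g_{i_k}^{-1}(V_{i_k})$, pick for each $k$ an entourage $W_{i_k}$ of $Y_{i_k}$ with $W_{i_k}\circ W_{i_k}\subset V_{i_k}$, and set $W=\bigcap_k g_{i_k}^{-1}(W_{i_k})$. Take $(x,z)\in W\circ W$, with intermediate point $y$. Then for each $k$ the pair $(f_{i_k}(x),f_{i_k}(z))$ lies in $W_{i_k}\circ W_{i_k}$, via the intermediate point $f_{i_k}(y)$. Hence $(x,z)\in V$. This composition step is the only point needing any care, and it is a direct unwinding of definitions.

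Next, each $f_i$ is uniformly continuous for $\mathcal{U}$: for an entourage $V_i$ of $Y_i$, the set $g_i^{-1}(V_i)$ is an entourage of $\mathcal{U}$ that is mapped by $g_i$ into $V_i$.

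Conversely, let $\mathcal{U}'$ be any uniformity making all $f_i$ uniformly continuous. Then each $g_i^{-1}(V_i)$ contains an entourage of $\mathcal{U}'$, and is therefore itself in $\mathcal{U}'$ by $(F_1)$. By $(F_2)$ every member of $\mathfrak{B}$ lies in $\mathcal{U}'$. Hence $\mathcal{U}\subset\mathcal{U}'$, i.e.\ $\mathcal{U}'$ is finer than $\mathcal{U}$, by the comparison proposition above. So $\mathcal{U}$ is the coarsest such uniformity.

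Finally, let $h:Z\to X$. If $h$ is uniformly continuous, then each $f_i\circ h$ is uniformly continuous as a composite of uniformly continuous maps. Conversely, suppose every $f_i\circ h$ is uniformly continuous. Given a basic entourage $\bigcap_{k=1}^n g_{i_k}^{-1}(V_{i_k})$, choose entourages $U_k$ of $Z$ with $(h\times h)(U_k)\subset g_{i_k}^{-1}(V_{i_k})$. The finite intersection $\bigcap_k U_k$ is an entourage of $Z$ and is mapped by $h\times h$ into the basic entourage. Since $\mathfrak{B}$ is fundamental, this shows that $h$ is uniformly continuous.
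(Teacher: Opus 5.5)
Your proof is correct: the checks of $(U_1)$--$(U_3)$ on the finite intersections are sound, including the composition step where you pick $W$ factorwise with $W\circ W\subset V$. So are the minimality argument (each $g_i^{-1}(V_i)$ contains an entourage of any uniformity making $f_i$ uniformly continuous) and the universal property for $h$. The paper gives no proof of this proposition, since Section \ref{sec_uniform} explicitly collects these facts without proof and refers to \cite{bourbaki_topology}; your argument is essentially the standard one found there.
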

The dual notion of final uniformity can also be defined and satsifies corresponding dual properties. 
\subsection{Uniformity induced by a family of pseudometrics}\label{subsec_uniform_pseudometrics}
Let us begin by giving the definition of a uniformity induced by a pseudometric.
\begin{definition}
	Given a pseudometric ${\rm d}$ on a set $X$, the uniformity defined by ${\rm d}$ on $X$ which has as a fundamental system of entourages the family of sets ${\rm d}^{-1}([0,\a])$, where $\a$ runs through the set of all positive real numbers. 
\end{definition}
We can now define the uniformity induced by a family of pseudometrics.
\begin{definition}
	If $\{{\rm d}_i\}_{i\in I}$ is a family of pseudometrics on a set $X$, then the least upper bound of the set of uniformities defined on $X$ by the pseudometrics ${\rm d}_i$ is called the uniformity defined by the family $\{{\rm d}_i\}_{i\in I}$. 
\end{definition}
A fundamental system of entourages of the uniformity defined by a family of pseudometrics $\{{\rm d}_i\}_{i\in I}$ is given by 
\begin{equation}
\bigcap_{i\in J}{\rm d}^{-1}_{i}([0,\a])
\end{equation}
where $\a$ runs through the positive rational numbers, and $J$ runs through the finite subsets of $I$. 
\subsection{Uniformity on a compact space} 
The following existence and uniqueness theorem will be fundamental to the the proof of Theorem \ref{them_main_transl_inv}.  
\begin{theorem}\label{thm_unique_uniform_structure_compact}
	On a compact space $X$, there exists exactly one uniformity compatible with the topology of $X$; the entourages of this uniformity are all the neighborhoods of the diagonal of $\D$ in $X\times X$. Furthermore, $X$ endowed with this uniformity is a complete uniform space. 
\end{theorem}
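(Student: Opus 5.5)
The plan is to prove the three assertions of Theorem \ref{thm_unique_uniform_structure_compact} in turn: existence, uniqueness, and completeness. Throughout, $X$ is compact Hausdorff, as in Bourbaki's convention. Let $\mathcal{N}$ be the family of all neighbourhoods of the diagonal $\D$ in $X\times X$.

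\textbf{Existence.} I will show that $\mathcal{N}$ is a uniformity that induces the topology of $X$. Axioms $(F_1)$, $(F_2)$ and $(U_1)$ are immediate. $(U_2)$ holds because the flip $(x,y)\mapsto(y,x)$ is a homeomorphism fixing $\D$.

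The main step is $(U_3)$. Let $V\in\mathcal{N}$, and suppose that no $W\in\mathcal{N}$ satisfies $W\circ W\subset V$.
\begin{itemize}
\item For each $W\in\mathcal{N}$ the set $(W\circ W)\setminus V$ is then non-empty.
\item These sets form a filter base on the compact set $(X\times X)\setminus \mathring V$.
\item By compactness the base has a cluster point $(x,z)$ with $x\neq z$.
\item By the Hausdorff property, choose disjoint open sets $U_x\ni x$ and $U_z\ni z$, and an open set $U_y$ for the remaining points. Building a neighbourhood $W$ of $\D$ from these opens (for instance $W=\bigcup_i O_i\times O_i$ for a suitable finite open cover), the points of $W\circ W$ near $(x,z)$ would force a chain linking $U_x$ to $U_z$ through a single $O_i$. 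Choosing the cover so that no $O_i\times O_i$ meets both, this is a contradiction.
\end{itemize}
I expect this step to be the main obstacle. The cleanest route is through normality of compact Hausdorff spaces, which gives star refinements of finite open covers. Setting $W=\bigcup_i O_i\times O_i$ for a cover whose stars lie in sets $U$ with $U\times U\subset V$ then yields $W\circ W\subset V$. The sets $U$ with $U\times U\subset V$ cover $X$ by the tube lemma.

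Compatibility with the topology goes as follows. For an open $O\ni x$, the set $(O\times O)\cup((X\setminus\{x\})\times(X\setminus\{x\}))$ is a neighbourhood of $\D$, because points are closed. Its section at $x$ is $O$. Conversely, each section $V(x)$ of an entourage is a neighbourhood of $x$ since $V$ contains a neighbourhood of $(x,x)$.

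\textbf{Uniqueness.} Let $\mathcal{U}$ be any compatible uniformity. Each of its entourages contains a symmetric entourage $W$ with $W\circ W\circ W\subset V$. Then $\bigcup_x W(x)\times W(x)$ is a neighbourhood of $\D$ contained in $V$, so every entourage of $\mathcal{U}$ lies in $\mathcal{N}$.

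For the converse, let $N\in\mathcal{N}$ and suppose it contains no entourage of $\mathcal{U}$. Then the sets $V\setminus N$, for $V\in\mathcal{U}$, form a filter base on the compact set $(X\times X)\setminus\mathring N$, so they have a cluster point $(x,y)$. This point lies in every closed entourage, hence in the intersection of all entourages. By the Hausdorff criterion stated earlier, that intersection is $\D$, so $x=y$, which contradicts $(x,y)\notin\mathring N\supset\D$. Hence $\mathcal{U}=\mathcal{N}$.

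\textbf{Completeness.} A Cauchy filter on $X$ has a cluster point by compactness. A Cauchy filter converges to any of its cluster points, so $X$ is complete.
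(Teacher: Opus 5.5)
Your proof is correct. The paper contains no argument for this statement: it is quoted from Bourbaki in a section announced as ``without proofs''. So the only comparison is with the standard textbook proof.

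Three of your four steps are the usual ones:
\begin{itemize}
\item uniqueness: every entourage contains the neighbourhood $\bigcup_x W(x)\times W(x)$ of $\Delta$, and conversely a cluster point of $\{V\setminus N\}_{V\in\mathcal{U}}$ would lie in $\bigcap\mathcal{U}=\Delta$;
\item compatibility, via the set $(O\times O)\cup\bigl((X\setminus\{x\})\times(X\setminus\{x\})\bigr)$;
\item completeness.
\end{itemize}
Where you genuinely differ is $(U_3)$. Instead of a cluster-point contradiction on $X\times X$, you build $W=\bigcup_i O_i\times O_i$ from a star refinement. This gives an explicit $W$. Combined with A.~H.~Stone's star-refinement theorem, the same argument shows that the neighbourhoods of the diagonal form a uniformity on any paracompact Hausdorff space, so compactness is then needed only for uniqueness. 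The cluster-point route avoids the refinement lemma and uses compactness of $X\times X$ directly, at the price of a careful choice of $W$ near the cluster point.

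That care is exactly what your first bulleted sketch lacks. $W\circ W$ involves two-step chains $x'\to y\to z'$ through two \emph{different} overlapping sets, so requiring that no $O_i$ meets both $U_x$ and $U_z$ is not enough. For example, on $[0,1]$ take $U_x=[0,1/3)$, $U_z=(2/3,1]$, $O_1=[0,2/3)$ and $O_2=(1/3,1]$. Neither $O_i$ meets both sets, yet $(0,1)\in W\circ W$ via $y=1/2$. The star-refinement route is the one that controls these chains, so it should be the proof.

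A few details in that route need tightening.
\begin{itemize}
\item The cover by open $U$ with $U\times U\subset V$ comes from the definition of the product topology, not the tube lemma.
\item You must pass to a finite subcover by compactness before invoking the finite-cover refinement lemma for normal spaces.
\item Only point-stars matter. If $\mathrm{St}(y,\mathcal{O})\subset U$ with $U\times U\subset V$, then $(x,z)\in W\circ W$ via $y$ forces $x,z\in\mathrm{St}(y,\mathcal{O})$. So a barycentric refinement suffices. For a finite cover $\{U_i\}$ it comes directly from a closed shrinking $F_i\subset U_i$, by taking the sets $\bigcap_{i\in S}U_i\cap\bigcap_{i\notin S}(X\setminus F_i)$ for $S\subset\{1,\dots,n\}$.
\end{itemize}

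Finally, in the uniqueness step, going from ``lies in every closed entourage'' to ``lies in every entourage'' uses the standard fact that closed entourages form a fundamental system. It is worth citing explicitly.
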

\subsection{The uniformity of uniform convergence on function spaces} 
Given two sets $X$ and $Y$, we define the set of all maps from $X$ to $Y$ as follows $$\mathcal{F}(X;Y):=\{f:X\to Y\}.$$ 
Assume further now that $Y$ is a uniform space. For every entourage $V$ of $Y$, we let ${\mathbf W}(V)$ denote the set of all pairs $(u,v)$ of mappings of $X$ into $Y$ such that $(u(x),v(x))\in V$ for every $x\in X$. As $V$ runs through the set of entourage of $Y$, the sets ${\mathbf W}(V)$ form a fundamental system of entourages of a uniformity of $\mathcal{F}_u(X;Y)$ on $\mathcal{F}(X;Y)$. Let us define the uniformity of uniform convergence. 
\begin{definition}
	The uniformity on the set $\mathcal{F}_u(X;Y),$ which has as a fundamental system of entourages the set of subsets ${\mathbf W}(V)$, where $V$ runs through the set of entourages of $Y$, is called the uniformity of uniform convergence. The topology it induces is called the topology of uniform convergence. 
\end{definition}

\subsection{$\mathfrak{S}$-convergence}
The uniformity $\mathfrak{S}$-convergence refines the uniformity of uniform convergence. 
\begin{definition}
	Let $X$ be a set, $Y$ a uniform space, $\mathfrak{S}$ a set of subsets of $X$. The uniformity of uniform convergence in the sets of $\mathfrak{S}$, or simply the uniformity of $\mathfrak{S}$-convergence, is the coarsest uniformity on $\mathcal{F}(X;Y)$ which makes uniformly continuous the restriction mappings $ u\longmapsto u\lfloor_A$ of $\mathcal{F}(X;Y)$ into $\mathcal{F}_u(A;Y)$ where $A$ runs through $\mathfrak{S}$. The uniform space obtained by endowing $\mathcal{F}(X;Y)$ with the uniformity of $\mathfrak{S}$-convergence is denoted by $\mathcal{F}_{\mathfrak{S}}(X;Y).$
\end{definition}
A fundamental system of entourages of $\mathcal{F}_\mathfrak{S}(X;Y)$ may be obtained as follows: for each $A\in \mathfrak{S}$ and each entourage $V$ of a fundamental system of entourages $\mathfrak{B}$ of $Y$, let $\WW(A;V)$ be the set of mappings $(u,v)$ of $X$ into $Y$ such that $(u(x),v(x))\in V$ for each $x\in A$; the \emph{finite intersection of the sets} $\WW(A;V)$ where $A$ runs through $\mathfrak{S}$ and $V$ runs through $\mathfrak{B}$ form a fundamental system of entourages of $\mathcal{F}_\mathfrak{S}(X;Y).$ It is thus immediate that if $\mathfrak{S}$, $\mathfrak{S}'$ are two sets of subsets of $X$ such that $\mathfrak{S}\subset \mathfrak{S}'$, then the uniformity of $\mathfrak{S}'$-convergence is finer than the uniformity of $\mathfrak{S}$-convergence. 

\subsection{Examples of $\mathfrak{S}$-convergence }
We now give three examples of $\mathfrak{S}$-convergence.
\subsubsection{Uniform convergence in a subset of $X$}
Let $A$ be a subset of $X$ and take $\mathfrak{S}=\{A\}$. The uniformity of $\mathfrak{S}$-convergence is then called the uniformity of uniform convergence in $A$.

\subsubsection{Pointwise convergence in a subset of $X$} Let $A$ be a subset of $X$, and take $\mathfrak{S}$ to be the set of all subsets of $X$, which consist of a single point belonging ot $A$. The uniformity of $\mathfrak{S}$ is the called the uniformity of pointwise convergence in $A$. 

\subsubsection{Compact convergence} Suppose that $X$ is a topological space, and take $\mathfrak{S}$ to be the set of all compacts of $X$. The uniformity of $\mathfrak{S}$-convergence is then called the uniformity of compact convergence, and the uniform space obtained by endowing $\mathcal{F}(X;Y)$ with this uniformity is denoted by $\mathcal{F}_c(X;Y)$. 

The uniform structure of compact convergence is coarser than that of uniform convergence, and the two coincide if $X$ is compact; it is also finer than the uniform structure of pointwise convergence, and these two coincide if $X$ is discrete. 

\subsection{$\mathfrak{S}$-convergence in spaces of continuous maps}
Let now $X$ be a topological space and let $C(X;Y)$ denote the set of all \emph{continuous mappings} of $X$ into $Y$. If $\mathfrak{S}$ is a set of subsets of $X$, and if $Y$ is a uniform space, we denote by $\mathcal{C}_{\mathfrak{S}}(X;Y)$ the uniform space on ${C}(X;Y)$ endowed with the uniformity of $\mathfrak{S}$-convergence, i.e. the initial uniformity with respect to the canonical injection $C(X;Y)\hookrightarrow \mathcal{F}_\mathfrak{S}(X;Y)$. 

\begin{theorem}\label{theorem_hausdorff_complete}
	Let $X$ be a topological space, let $\mathfrak{S}$ be a set of subsets of $X$ and let $Y$ be a uniform space. The following hold:
	\begin{enumerate}
\item if $Y$ is Hausdorff and if the union of the sets $\mathfrak{S}$ is dense in $X$, then $\mathcal{C}_{\mathfrak{S}}(X;Y)$ is Hausdorff;
\item if $Y$ is a complete, then $\mathcal{C}_{\mathfrak{S}}(X;Y)$ is complete. 
	\end{enumerate} 
\end{theorem}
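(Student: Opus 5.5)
For (1), I will use the Hausdorff criterion stated earlier: a uniform space is Hausdorff exactly when the intersection of all its entourages is the diagonal. Take $u,v\in C(X;Y)$ with $(u,v)$ lying in every entourage of $\mathcal{C}_{\mathfrak{S}}(X;Y)$. In particular $(u,v)\in \WW(A;V)$ for every $A\in\mathfrak{S}$ and every entourage $V$ of $Y$. Fix $x$ in $\bigcup\mathfrak{S}$. Then $(u(x),v(x))$ lies in every entourage of $Y$. Since $Y$ is Hausdorff, this forces $u(x)=v(x)$. So $u$ and $v$ agree on the dense set $\bigcup\mathfrak{S}$. Two continuous maps into a Hausdorff space agree on a closed set, hence $u=v$ everywhere.

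For (2), I will proceed in three steps.

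First I will show that $\mathcal{F}_{\mathfrak{S}}(X;Y)$ is complete. Let $\Phi$ be a Cauchy filter. For each $x\in\bigcup\mathfrak{S}$, the image of $\Phi$ under evaluation at $x$ is Cauchy in $Y$, because the sets $\WW(\{x\};V)$ are coarser than the sets $\WW(A;V)$ with $x\in A$. By completeness of $Y$ it converges to some point $f(x)$. On $X\setminus\bigcup\mathfrak{S}$ every map is admissible, since the uniformity does not see those points, so I set $f$ equal to $u_0$ there, for an arbitrary $u_0$ in some member of $\Phi$. The standard $\tfrac{\cdot}{3}$ argument then gives convergence to $f$ uniformly on each $A\in\mathfrak{S}$. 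Concretely, I pick a closed symmetric entourage $W$ and $M\in\Phi$ with $M\times M\subset\WW(A;W)$. Then for $u\in M$ and $x\in A$, letting $v$ range in $M$ and passing to the limit gives $(u(x),f(x))\in\overline W\subset V$.

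Second, I will show that the limit $f$ can be taken continuous, i.e. that $C(X;Y)$ is closed in $\mathcal{F}_{\mathfrak{S}}(X;Y)$. This uses the classical fact that a uniform limit of continuous maps on a set $A$ is continuous on $A$. Applied on each $A\in\mathfrak{S}$, it shows $f\lfloor_A$ is continuous for every $A\in\mathfrak{S}$.

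Third, I must upgrade this to continuity of $f$ at every point of $X$, and I expect this to be the main obstacle. Here I will use the situation in which the theorem is invoked in this paper, namely compact convergence ($\mathfrak{S}$ equal to the compacts) on locally compact spaces such as $\R$ or $\supp\mu$. In that case every point has a neighbourhood belonging to $\mathfrak{S}$. Continuity of $f$ on that neighbourhood then gives continuity of $f$ at the point, so $f\in C(X;Y)$. Since the restriction of a Cauchy filter on $C(X;Y)$ converges in $\mathcal{F}_{\mathfrak{S}}$ to $f$, and $f$ is continuous, this yields completeness of $\mathcal{C}_{\mathfrak{S}}(X;Y)$.

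I would state this local-neighbourhood condition explicitly when writing the proof, because without it the limit need not be continuous. The completeness then reduces to that of $\mathcal{F}_{\mathfrak{S}}$ together with closedness of $C(X;Y)$, as in \cite{bourbaki_topology}.
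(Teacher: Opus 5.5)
The paper gives no proof of this statement: it is quoted from Bourbaki in a section that explicitly lists results without proof. Your outline follows the standard Bourbaki route: completeness of $\mathcal{F}_{\mathfrak{S}}(X;Y)$, then closedness of the maps continuous on each member of $\mathfrak{S}$, then a local condition to get global continuity. Your proof of (1) is complete, and your three steps for (2) are correct under the hypothesis you add.

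You are right that (2) cannot be proved as printed. You should back this up with a counterexample rather than only asserting that the limit ``need not be continuous''. Take $X=[0,1]$, $Y=\R$ and $\mathfrak{S}$ the set of singletons. This is a covering of $X$ by compacts, so even the Ascoli-type assumption does not rescue the claim. The sequence $u_n(x)=x^n$ is Cauchy in $\mathcal{C}_{\mathfrak{S}}([0,1];\R)$, and this space is Hausdorff by (1). Any limit would therefore have to be the pointwise limit $\mathbb{1}_{\{1\}}$, which is not continuous.

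Your extra hypothesis (every point of $X$ has a neighbourhood belonging to $\mathfrak{S}$) is a standard sufficient condition. It holds where the paper actually uses the theorem, namely for $\mathcal{C}_c(\R;Y)$ in Section~\ref{subsec_measure_space_cont_paths}, so nothing downstream is affected. The paper's next Proposition (compact convergence with $X$ metrizable or locally compact) is in fact the correct form of (2) that the paper needs.

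If you also want the metrizable case of that Proposition, replace your Step 3 as follows. For $x_n\to x$, the set $\{x\}\cup\{x_n\}_{n\in\N}$ is compact, so continuity of $f$ on it (your Step 2) gives $f(x_n)\to f(x)$. Sequential continuity suffices on a metrizable space.
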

We also have the following for the uniformity of compact convergence in the spaces of continuous maps. 
\begin{proposition}
	If $X$ is either metrizable or locally compact, and $Y$ is a uniform space. Then $\mathcal{C}_c(X;Y)$ is closed in the uniform space $\mathcal{F}_c(X;Y)$; if in addition $Y$ is complete, then the uniform space $\mathcal{C}_c(X;Y)$ is complete. 
\end{proposition}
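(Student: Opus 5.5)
The strategy is to show the two assertions separately: first that $\mathcal{C}_c(X;Y)$ is a closed subset of $\mathcal{F}_c(X;Y)$, and then that completeness of $Y$ passes to $\mathcal{F}_c(X;Y)$ and hence to the closed subspace $\mathcal{C}_c(X;Y)$.

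\emph{Closedness.} Let $f\in\mathcal{F}_c(X;Y)$ lie in the closure of $C(X;Y)$. We must show $f$ is continuous at every $x_0\in X$. The first step is to reduce to compact sets. If $X$ is locally compact, $x_0$ has a compact neighbourhood $K$, and it suffices that $f\lfloor_K$ be continuous. If $X$ is metrizable, continuity of $f$ is equivalent to sequential continuity. Given $x_n\to x_0$, the set $K=\{x_0\}\cup\{x_n\}_{n\in\N}$ is compact, and it suffices that $f\lfloor_K$ be continuous at $x_0$. In either case we have reduced to proving that $f\lfloor_K$ is continuous for a compact $K$.

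For this, fix an entourage $V$ of $Y$ and pick a symmetric entourage $W$ with $W\circ W\circ W\subset V$, using $(U_2)$ and $(U_3)$. Since $f$ is in the closure of $C(X;Y)$ for compact convergence, there is a continuous $g$ with $(g(x),f(x))\in W$ for all $x\in K$. By continuity of $g$ at $x_0$, there is a neighbourhood $U$ of $x_0$ in $K$ with $(g(x_0),g(x))\in W$ for $x\in U$. Chaining these three relations gives $(f(x_0),f(x))\in V$ for $x\in U$, which is the classical $3\varepsilon$ argument written with entourages. This proves that $f\lfloor_K$ is continuous, hence that $f$ is continuous.

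\emph{Completeness.} Assume now that $Y$ is complete and let $\mathcal{F}$ be a Cauchy filter on $\mathcal{F}_c(X;Y)$. For each $x\in X$, the evaluation map $u\mapsto u(x)$ is uniformly continuous, because singletons are compact. Hence the image of $\mathcal{F}$ under evaluation at $x$ is a Cauchy filter on $Y$, and it converges to some $f(x)$. Next we show that $\mathcal{F}\to f$ uniformly on each compact $K$. Given a closed entourage $V$, choose $M\in\mathcal{F}$ with $(u(x),v(x))\in V$ for all $u,v\in M$ and $x\in K$. Letting $v$ follow the filter and using that $V$ is closed, we obtain $(u(x),f(x))\in V$ for $u\in M$ and $x\in K$. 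Thus $\mathcal{F}\to f$ in $\mathcal{F}_c(X;Y)$, so $\mathcal{F}_c(X;Y)$ is complete. Finally, $\mathcal{C}_c(X;Y)$ is a closed subspace of a complete uniform space and is therefore complete; this is also consistent with Theorem \ref{theorem_hausdorff_complete}(2).

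\emph{Main obstacle.} The only delicate point is the reduction to compact sets in the closedness argument, since compact convergence controls $f$ only on compact sets. This is exactly where the hypothesis that $X$ is metrizable or locally compact is needed, through the compact sets $\{x_0\}\cup\{x_n\}_{n\in\N}$ and the compact neighbourhoods, respectively.
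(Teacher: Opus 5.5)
The paper gives no proof of this proposition (it is quoted from Bourbaki), and your argument is correct. It is the standard route: the three-entourage argument shows that a limit of continuous maps under compact convergence is continuous on each compact set. You localise this through compact neighbourhoods in the locally compact case, or through the compact sets $\{x_0\}\cup\{x_n\}_{n\in\N}$ in the metrizable case. Completeness then follows from completeness of $\mathcal{F}_c(X;Y)$, obtained via closed entourages, together with passage to the closed subspace.

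The one thing I would drop is your closing remark that the result is consistent with Theorem~\ref{theorem_hausdorff_complete}(2). As stated in the paper for an arbitrary $\mathfrak{S}$, that item is false. Take pointwise convergence on $C([0,1];\R)$: the maps $x\mapsto x^n$ form a Cauchy sequence with no continuous limit. The correct statement concerns $\mathcal{F}_{\mathfrak{S}}(X;Y)$, which is exactly what you prove before combining it with closedness.
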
 
Let us now define equicontinuity which we will use to state a compactness criterion in $\mathcal{C}_{\mathfrak{S}}(X;Y)$. 
\begin{definition}
	Let $X$ be a topological space and $Y$ a uniform space. A subset $H$ of $\mathcal{F}(X;Y)$ is said to be equicontinuous at a point $x_0\in X$, if for each entourage $V$ of $Y$, there exists a neighborhood $U$ of $x_0$ in $X$ such that $(f(x_0),f(x))\in V$ for every $x\in U$ and every $f\in H$. $H$ is said to be equicontinuous if it is equicontinuous at every point in $X$. 
\end{definition}
We now state a compactness criterion in $\mathcal{C}_{\mathfrak{S}}(X;Y)$, which will be essential in proving Theorem \ref{thm_main_strong_markov}. 
\begin{theorem}[Ascoli]
	Let $X$ be a topological space, let $\mathfrak{S}$ be a covering of $X$ by compacts, let $Y$ be a uniform space and $H$ a set of continuous mappings of $X$ into $Y$ such that for each $A\in \mathfrak{S}$ and each $u\in H$, the restriction $u$ to $A$ is continuous. Then for $H$ to be precompact with respect to the uniformity on $\mathcal{C}_{\mathfrak{S}}(X;Y)$, it is necessary and sufficient that the following hold:
	\begin{enumerate}
		\item  for each $A\in \mathfrak{S}$, the set $H \lfloor_A\subset \mathcal{F}(A;Y)$ of restrictions to $A$ of functions of $H$ is equicontinuous;
		\item for each $x\in X$, the set $H(x)\subset Y$ of points $u(x)$ is precompact. 
	\end{enumerate}
\end{theorem}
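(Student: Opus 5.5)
The statement to address is the generalised Ascoli theorem, which the paper quotes from Bourbaki. My plan is to reduce it to the case of a single compact set, and then to compare the uniformity of uniform convergence on an equicontinuous set with that of pointwise convergence.

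\emph{Reduction to one compact.} By definition, $\mathcal{C}_{\mathfrak{S}}(X;Y)$ carries the initial uniformity for the restriction maps $u\mapsto u\lfloor_A\in\mathcal{F}_u(A;Y)$, $A\in\mathfrak{S}$. A set is precompact for an initial uniformity if and only if its image under each defining map is precompact. Indeed, a finite intersection of pulled-back entourages $g_1^{-1}(V_1)\cap\dots\cap g_n^{-1}(V_n)$ can be handled by covering each image by finitely many $V_i$-small sets and intersecting the resulting partitions. So it suffices to prove: for compact $A$, the set $H\lfloor_A$ is precompact in $\mathcal{F}_u(A;Y)$ if and only if it is equicontinuous and pointwise precompact. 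Since $\mathfrak{S}$ covers $X$, conditions (1) and (2) are exactly these two conditions on each $A$.

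\emph{Sufficiency.} First I show that on an equicontinuous set $H\lfloor_A$ with $A$ compact, the uniformity of uniform convergence coincides with that of pointwise convergence. Fix a symmetric entourage $V$ with $V\circ V\circ V\subset W$. By equicontinuity, each $x\in A$ has a neighbourhood $U_x$ with $(f(x),f(z))\in V$ for all $z\in U_x$ and all $f\in H$. By compactness, finitely many $U_{x_1},\dots,U_{x_k}$ cover $A$. If $(u(x_j),v(x_j))\in V$ for every $j$, then $(u(z),v(z))\in V\circ V\circ V\subset W$ for every $z\in A$. So the pointwise entourage at $x_1,\dots,x_k$ is contained in $\mathbf{W}(W)$. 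Next, the pointwise uniformity is the product uniformity on $Y^A$, and the set $H\lfloor_A$ lies in $\prod_{x\in A}H(x)$, each factor being precompact. A product of precompact sets is precompact: for a basic entourage only finitely many coordinates are constrained, so finitely many small sets suffice. This gives precompactness in $\mathcal{F}_u(A;Y)$. No Tychonov or choice beyond finite choices is needed.

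\emph{Necessity.} Pointwise precompactness follows because evaluation at $x$ is uniformly continuous from $\mathcal{F}_u(A;Y)$ to $Y$, and uniformly continuous images of precompact sets are precompact. For equicontinuity at $x_0\in A$, fix $W$ and take $V$ symmetric with $V\circ V\circ V\subset W$. Cover $H\lfloor_A$ by finitely many $\mathbf{W}(V)$-balls centred at $u_1,\dots,u_m\in H$. Each $u_i\lfloor_A$ is continuous, which is where the hypothesis on restrictions is used. So there is a neighbourhood $U$ of $x_0$ in $A$ on which $(u_i(x_0),u_i(x))\in V$ for every $i$. For any $f\in H$, choose $i$ with $(f,u_i)\in\mathbf{W}(V)$ and chain the three relations to get $(f(x_0),f(x))\in W$. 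The equicontinuity of $H\lfloor_A$ is understood in the topology of $A$, which is what condition (1) requires.

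The main obstacle is the sufficiency step. Equicontinuity is pointwise in $x$, while the statement needs a uniform conclusion, so the argument has to use compactness of $A$ together with the three-step chaining of $V$'s. The precompactness of the product is also used there, and it must be argued without invoking full Tychonov.
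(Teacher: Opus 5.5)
Your proof is correct. The paper gives no argument of its own for this statement and only quotes it from Bourbaki (\emph{General Topology}, Ch.~X, \S 2). Your route is essentially Bourbaki's proof: you reduce to a single $A\in\mathfrak{S}$ through the initial uniformity, show that the pointwise and uniform uniformities coincide on an equicontinuous set over a compact, use precompactness of $\prod_{x\in A}H(x)$, and obtain necessity from a finite $\mathbf{W}(V)$-net with the three-fold chaining.
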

Let us finally quote the following theorem from \cite[Chapter X, 3.3]{bourbaki_topology}.
\begin{theorem}\label{thm_separable}
	Let $X$ be a locally compact space whose topology admits a countable basis, and let $Y$ be a metrizable separable uniform space. Then the space $\mathcal{C}_c(X;Y)$ of continuous mappings of $X$ into $Y$, endowed with the topology of compact convergence, is a metrizable separable space. 
\end{theorem}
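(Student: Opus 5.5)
The statement is Theorem \ref{thm_separable}, which the paper quotes from \cite[Chapter X, 3.3]{bourbaki_topology}. I would prove it by reducing it to the case of a compact domain. The reduction uses the exhaustion of $X$ by compacts, and the compact case is handled by an explicit countable dense family.

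\emph{Step 1: a countable exhaustion.} Since $X$ is locally compact with a countable basis, it is $\sigma$-compact. More precisely, there is an increasing sequence of compacts $K_1\subset K_2\subset\dots$ with $K_n\subset \mathrm{int}\,K_{n+1}$ and $\bigcup_n K_n=X$. I would build it from the countable family of basic open sets with compact closure, which covers $X$. Every compact $A\subset X$ is then contained in some $K_n$, because the interiors $\mathrm{int}\,K_{n+1}$ form an increasing open cover of $A$. It follows that the uniformity of compact convergence is the initial uniformity with respect to the restriction maps $\mathcal{C}_c(X;Y)\to \mathcal{C}_u(K_n;Y)$, $n\in\N$ (Proposition \ref{prop_intial}).

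\emph{Step 2: metrizability.} Fix a metric ${\rm d}$ on $Y$ compatible with its uniformity, and replace it by $\min({\rm d},1)$. For each $n$ set ${\rm d}_n(u,v)=\sup_{x\in K_n}{\rm d}(u(x),v(x))$, which is a pseudometric. Then define
\[
\rho(u,v)=\sum_{n} 2^{-n}{\rm d}_n(u,v).
\]
By Step 1 and \S\ref{subsec_uniform_pseudometrics}, the countable family $\{{\rm d}_n\}$ defines the uniformity of compact convergence. The function $\rho$ induces the same uniformity, since small $\rho$ forces small ${\rm d}_1,\dots,{\rm d}_N$ and conversely. Finally $\rho$ is a metric, because the $K_n$ cover $X$.

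\emph{Step 3: separability.} By the metric $\rho$ of Step 2, it suffices to show that each $\mathcal{C}_u(K;Y)$, for $K$ compact metrizable, is separable. Given this, a diagonal argument handles $X$. I expect this to be the main obstacle: a function approximated in $\mathcal{C}_u(K_n;Y)$ must be approximated by restrictions of globally defined continuous maps, and $Y$ has no linear structure allowing Tietze extension.

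To avoid the extension problem, I would instead prove directly that the metric space $\mathcal{C}_c(X;Y)$ is second countable. Let $\{U_i\}$ be a countable basis of $X$ with compact closures, and let $\{B_j\}$ be a countable basis of $Y$. Consider the countable family of finite intersections of sets of the form $W(\overline{U_i},B_j)=\{u:u(\overline{U_i})\subset B_j\}$. I would show that these sets generate the compact-open topology. The argument: given $u$, a compact $A$ and $\e>0$, cover $A$ by finitely many $\overline{U_i}$ on which $u$ oscillates by less than $\e$, using the uniform continuity of $u$ on a compact neighbourhood of $A$. Then choose basic sets $B_j$ of small diameter containing $u(\overline{U_i})$. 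Any $v$ in the resulting intersection is $3\e$-close to $u$ on $A$.

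Since the compact-open topology coincides with the topology of compact convergence (as $Y$ is a uniform space), the space is metrizable and second countable, hence separable. The delicate point is choosing the $B_j$ as enlargements of $u(\overline{U_i})$ that still have small diameter. I would handle it by taking $B_j$ from the family of finite unions of small basic balls, which is still countable, so that second countability is not lost.
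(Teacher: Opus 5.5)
The paper gives no proof of this statement; it only cites Bourbaki, so there is no internal argument to compare with, and your proposal has to stand on its own. It does. Metrizability follows, as you say, from the countable family of pseudometrics ${\rm d}_n(u,v)=\sup_{x\in K_n}{\rm d}(u(x),v(x))$ attached to an exhaustion with $K_n\subset\mathrm{int}\,K_{n+1}$. Separability follows from second countability, which your countable family $\{u : u(\overline{U_i})\subset B\}$ establishes, with $\overline{U_i}$ compact and $B$ a finite union of basic open sets. This is the classical argument.

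Three remarks would streamline it.

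\emph{First}, the obstacle you anticipate in the first half of Step 3 is not real. The map $u\mapsto (u\lfloor_{K_n})_{n\in\N}$ is an isometric embedding of $(C(X;Y),\rho)$ into $\prod_n\mathcal{C}_u(K_n;Y)$ with the metric $\sum_n 2^{-n}\sup_{K_n}{\rm d}$. A subspace of a separable metrizable space is separable, so no extension of maps is ever needed. Separability of $\mathcal{C}_u(K;Y)$ for compact $K$ would still require an argument like yours, so your direct route costs nothing extra.

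\emph{Second}, uniform continuity of $u$ on a compact neighbourhood of $A$ is unnecessary. Continuity of $u$ at each $x\in A$, together with regularity of $X$, gives a basic $U_i\ni x$ with $u(\overline{U_i})$ contained in the open ball of radius $\e/2$ about $u(x)$. If the base of $Y$ is taken to be the balls with centres in a countable dense set and rational radii, a single such ball of radius $\e$ already contains $u(\overline{U_i})$. So the finite unions are only needed for an arbitrary base.

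\emph{Third}, you can avoid invoking the identification of the compact-open topology with the topology of compact convergence. The set $\{u: u(K)\subset B\}$, with $K$ compact and $B\neq Y$ open, is open for compact convergence because ${\rm d}(u(K),Y\setminus B)>0$. Your $\e$-argument then shows directly that every neighbourhood of $u$ for compact convergence contains a finite intersection of such sets containing $u$.
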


\section{Measure theory on topological spaces} \label{sec_measure_theory_top_space}
In this section, we first present some useful results of measures theory of topological spaces. A detailed presentation is given in Laurent Schwartz's book \cite{schwartz_radon_measures}. We then specify to measures where the underlying topological space is a space of continuous paths valued in a locally compact Polish space. 
\subsection{Radon measures on topological spaces}
Let $X$ be a locally compact Polish space and let $\mathscr{B}(X)$ be its Borel $\s$-algebra. Let $C_c(X)$ be the space of compactly supported, real-valued continuous functions on $X$. It is a Banach space under the $\sup$ norm. A functional on $C_c(X)$ is a map from $C_c(X)$ to the real numbers. A functional $\lambda$ is said to be \emph{positive}, if $\Phi\geq 0$ implies that $\lambda(\Phi)\geq 0$. 

\subsubsection{Basic facts on Radon measures}
By convention, a Radon measure will always be assumed to be positive. 
Let us begin with the definition of Radon measures as a real-valued set map. 
\begin{definition}
	A map $\nu:\mathscr{B}(X) \to \R^+$ is said to be a Radon measure on $X$, if 
	\begin{enumerate}
		\item $\nu$ is outer regular on all Borel sets, i.e.
		\begin{equation}
		\nu(A)=\inf\{\nu(B)\;:\;B \text{ open and } A\subset B\};
		\end{equation}
		\item $\nu$ is inner regular on all Borel sets, i.e. 
		\begin{equation}
		\nu(A)=\sup\{\nu(F)\;:\; F\subset A, \text{ $F$ compact and measurable}\};
		\end{equation}
		\item $\nu$ is finite on compacts, i.e. $\nu(K)<+\infty$ for every $K$ compact in $X$. 
	\end{enumerate}
	
\end{definition}
Let $\mathcal{M}(X)$ be the set of Radon measures on $X$. 
Every Radon measure on $X$ induces a continous linear functional on $C_c(X)$ through the formula
\begin{equation}\label{eqn_riesz_mark}
\lambda(\Phi)=\int_X \Phi(x)\nu(dx)\qquad\forall\Phi\in C_c(X). 
\end{equation}
Conversely, by the Riesz-Markov-Kakutani theorem, every Radon measure arises as a positive continuous linear functional on $C_c (X)$ through the formula \eqref{eqn_riesz_mark}.
\subsubsection{Vague uniformity on Radon measures}  We henceforth equip the set $\mathcal{M}(X)$ with the uniform structure induced by the family of pseudometrics
\begin{equation}
{\rm d}_\Phi(\nu^1,\nu^2)=\Big|\int_X \Phi(x)\nu^1(dx)-\int_X \Phi(x)\nu^2(dx)\Big|,
\end{equation}
where $\Phi$ runs in $C_c(X)$. This uniform structure will be called the \emph{vague uniformity}, and the induced topology will be called the \emph{vague topology}. A fundamental system of entourage of the vague uniformity on $\mathcal{M}(X)$ is given by $\bigcap_{ \Phi\in J}{\rm d}^{-1}_{\Phi}([0,\a])$ where $J$ runs through all the finite subsets of $C_c(\G)$, and $\a$ runs in the nonnegative real numbers. 

Any subset $B$ of $\mathcal{M}(X)$ equipped with the uniformity induced from the vague uniformity will also be said to be equipped with the vague uniformity. A fundamental system of entourage of the vague uniformity on $B$ is then given by $\bigcap_{ \Phi\in J}{\rm d}^{-1}_{\Phi}([0,\a])\cap B\times B$ where $J$ runs through all the finite subsets of $C_c(\G)$. 
\subsubsection{Separability and metrizability}
 By Thereom \ref{thm_separable}, there exists a dense and countable subset $\mathscr{D}\subset C_c(X)$. 
\begin{lemma} \label{lem_separability}
Let $M>0$. Then the vague uniformity on $\{\nu\in \mathcal{M}(X)\;:\;\nu(X)=M\}$ has a  countable fundamental system of entourages, and is metrizable.  
\end{lemma}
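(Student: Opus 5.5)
The plan is to show that on the set $\mathcal{M}_M:=\{\nu\in\mathcal{M}(X):\nu(X)=M\}$, the countable subfamily of pseudometrics $\{{\rm d}_\Phi\}_{\Phi\in\mathscr{D}}$ generates the same uniformity as the full family $\{{\rm d}_\Phi\}_{\Phi\in C_c(X)}$. Then
\begin{equation*}
\bigcap_{\Phi\in J}{\rm d}_\Phi^{-1}([0,1/k])\cap(\mathcal{M}_M\times\mathcal{M}_M),
\end{equation*}
with $J$ a finite subset of $\mathscr{D}$ and $k\in\N$, is a countable fundamental system of entourages. Metrizability then follows from the standard fact in \cite{bourbaki_topology} that a Hausdorff uniformity with a countable fundamental system of entourages is metrizable. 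Explicitly, one can take
\begin{equation*}
{\rm d}(\nu^1,\nu^2)=\sum_n 2^{-n}\min\{1,{\rm d}_{\Phi_n}(\nu^1,\nu^2)\}
\end{equation*}
for an enumeration $\{\Phi_n\}_{n\in\N}$ of $\mathscr{D}$.

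\textbf{Comparing the uniformities.} The uniformity generated by $\mathscr{D}$ is trivially coarser than the vague one. For the converse, fix $\Phi\in C_c(X)$ and $\a>0$. By density of $\mathscr{D}$ in $C_c(X)$ for the sup norm, choose $\Psi\in\mathscr{D}$ with $\|\Phi-\Psi\|_\infty\le \a/(3M)$. For $\nu^1,\nu^2\in\mathcal{M}_M$,
\begin{equation*}
{\rm d}_\Phi(\nu^1,\nu^2)\le \|\Phi-\Psi\|_\infty\,\nu^1(X)+{\rm d}_\Psi(\nu^1,\nu^2)+\|\Phi-\Psi\|_\infty\,\nu^2(X)\le \tfrac{2\a}{3}+{\rm d}_\Psi(\nu^1,\nu^2).
\end{equation*}
Hence ${\rm d}_\Psi^{-1}([0,\a/3])\cap(\mathcal{M}_M\times\mathcal{M}_M)\subset{\rm d}_\Phi^{-1}([0,\a])$. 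Applying this to each element of a finite $J\subset C_c(X)$ shows every basic vague entourage contains a $\mathscr{D}$-entourage, so the two uniformities coincide on $\mathcal{M}_M$. This is exactly where the uniform mass bound $\nu(X)=M$ is used.

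\textbf{Hausdorff property.} The intersection of all entourages is the diagonal. If ${\rm d}_\Phi(\nu^1,\nu^2)=0$ for all $\Phi\in C_c(X)$, then $\nu^1=\nu^2$ by the uniqueness part of the Riesz--Markov--Kakutani correspondence for Radon measures. Hence ${\rm d}$ is a genuine metric.

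\textbf{Main obstacle.} The delicate point is the existence of a sup-norm dense countable $\mathscr{D}\subset C_c(X)$. The paper attributes this to Theorem \ref{thm_separable}, but that theorem gives separability of $\mathcal{C}_c(X;\R)$ only in the topology of compact convergence, which is weaker than the sup norm. I would therefore argue it directly. Exhaust $X$ by compacts $K_n\subset \mathrm{int}\,K_{n+1}$ and fix Urysohn cutoffs $\chi_n\in C_c(X)$ with $\chi_n=1$ on $K_n$. Each $C(K_{n+1})$ is separable, since $K_{n+1}$ is compact metrizable. Let $\mathscr{D}$ consist of the products $\chi_n\,\tilde g$, where $g$ runs over countable dense subsets of the $C(K_{n+1})$ and $\tilde g$ is a Tietze extension of $g$ to $X$. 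Any $\Phi$ supported in $K_n$ is then sup-norm approximated by such products, because $\Phi-\chi_n\tilde g=\chi_n(\Phi-\tilde g)$ and $\chi_n$ vanishes off $K_{n+1}$ (choosing $\operatorname{supp}\chi_n\subset K_{n+1}$). So $\mathscr{D}$ is countable and sup-norm dense in $C_c(X)$. With this, the rest of the argument is routine.
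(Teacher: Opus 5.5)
Your proof is correct and follows the same route as the paper. You approximate each $\Phi\in C_c(X)$ in sup norm by an element of a countable family $\mathscr{D}$, use the mass constraint $\nu(X)=M$ to show the $\mathscr{D}$-entourages are cofinal among the vague ones, and conclude metrizability from Bourbaki's criterion.

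Your extra details are sound and repair small gaps in the paper's write-up:
\begin{itemize}
\item Theorem \ref{thm_separable} only gives separability for compact convergence, so your direct construction of a sup-norm dense countable $\mathscr{D}$ is genuinely needed.
\item Your truncation by $\min\{1,\cdot\}$ guarantees that the series defining ${\rm d}$ converges. The paper's $\sum_k {\rm d}_{\Phi_k}/2^k$ need not converge, since $\|\Phi_k\|_\infty$ is unbounded over $\mathscr{D}$.
\end{itemize}
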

\begin{proof}
By \cite[Chapter IX, 2.4]{bourbaki_topology}, it suffices to show that the family of pseudometrics $\{{\rm d}_\Phi\}_{\Phi\in \mathscr{D}}$ induces a uniformity on $\{\nu\in \mathcal{M}(X)\;:\;\nu(X)=M\}$, which is isomorphic to the vague uniformity of $\{\nu\in \mathcal{M}(X)\;:\;\nu(X)=M\}$. 
	By the description of the fundamental system of entourages of $\mathcal{M}(X)$ above, we will thus have to show that for every $\a\in\R^+$ and every $\Phi\in C_c(\G)$, there exists $\b\in \Q^+$ and $\bar\Phi\in \mathscr{D}$ such that
\begin{equation}
{\rm d}^{-1}_{\bar\Phi}([0,\b])\subset {\rm d}^{-1}_{\Phi}([0,\a]). 
\end{equation}
Let $\Phi\in C_c(\G)$ and $\a\in \R^+$. Choose $\bar\Phi\in \mathscr{D}$ such that $\sup_{x\in X}|\bar\Phi(x)-\Phi(x)|\leq \a/4M$ and $\b\in (0,\a/2)\cap \Q$. 
Then we have 
\begin{equation}
\begin{split} 
\Big|\int_X\Phi(x)\nu^1(dx)-\int_X\Phi(x)\nu^2(dx)\Big|&\leq \int_X |\Phi(x)-\bar\Phi(x)|\nu^1(dx)+\int_X |\Phi(x)-\bar\Phi(x)|\nu^2(dx)\\
&\quad+\Big|\int_X\bar\Phi(x)\nu^1(dx)-\int_X\bar \Phi(x)\nu^2(dx)\Big|\\
&\leq \a/2 +\Big|\int_X\bar\Phi(x)\nu^1(dx)-\int_X\bar \Phi(x)\nu^2(dx)\Big|. 
\end{split} 
\end{equation}
Therefore, if $(\nu^1,\nu^2)\in {\rm d}_{\bar\Phi}^{-1}([0,\b])$, then  $(\nu^1,\nu^2)\in {\rm d}_{\Phi}^{-1}([0,\a])$, which proves the thesis. 
\end{proof}
We describe a countable fundamental system of entourages of the vague uniformity of $\{\nu\in \mathcal{M}(X)\;:\;\nu(X)=M\}$ by
\begin{equation}
\bigcap_{i=1}^n{\rm d}_{\Phi_i}^{-1}([0,\a]),
\end{equation}
where $n$ runs in $\N$, $\a$ runs in $\Q^+$ and $\Phi_i$ run in $\mathscr{D}$. Denoting by $\iota:\{\nu\in \mathcal{M}(X)\;:\;\nu(X)=M\}\hookrightarrow\mathcal{M}(X)$ the canonical injection, a metric inducing the vague uniformity is then given by $${\rm d}:=\sum_{k\in\N} {\rm d}_{\Phi_k}\circ (\iota\times\iota)/2^k.$$
The vague topology is thus characterised by the \emph{vague convergence} of sequences: we shall say that $(\nu_n)_{n\in\N}$ converges vaguely to $\nu$ as $n\to+\infty$, if 
\begin{equation}
\lim_{n\to+\infty}\int_X\Phi(x)\nu_n(dx)=\int_X\Phi(x)\nu(dx)\qquad\forall \Phi\in C_c(X).
\end{equation}

Recall that a topological space is called \emph{$\s$-compact}, if it admits a countable exhaustion by compacts. Note that a locally compact Polish space is always $\s$-compact.
The following lemma shows that metrizability holds for a larger class of subsets of subsets of $X$.
\begin{lemma}\label{lem_vague_unif_metrizable_compact_exhaus}
	Let $(A_k)_{k\in\N}$ be a countable exhaustion by compacts of $X$, and $(M_k)_{k\in\N}$ a sequence of positive real numbers. Then the vague uniformity on $$B:=\bigcap_{k\in\N}\{\nu\in \mathcal{M}(X)\;:\;\nu(A_k)=M_k\}$$ is metrizable. 
\end{lemma}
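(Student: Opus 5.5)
We follow the strategy of Lemma \ref{lem_separability}: we show that a countable subfamily of the pseudometrics $\{{\rm d}_\Phi\}_{\Phi\in C_c(X)}$ already induces, on $B$, a uniformity isomorphic to the vague uniformity of $B$. By \cite[Chapter IX, 2.4]{bourbaki_topology}, a uniformity with a countable fundamental system of entourages that is Hausdorff is metrizable. The vague uniformity is Hausdorff, because the Riesz--Markov--Kakutani theorem shows that a Radon measure is determined by its integrals against $C_c(X)$. So it remains to produce the countable family and to check that it generates the vague uniformity on $B$.

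The one difficulty compared with Lemma \ref{lem_separability} is that total masses are no longer bounded. The uniform approximation $\sup|\bar\Phi-\Phi|\le\varepsilon$ then controls $\int|\Phi-\bar\Phi|\,d\nu$ only if $\bar\Phi$ and $\Phi$ are supported in a set on which all $\nu\in B$ have a common mass bound. We handle this with the exhaustion.

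\begin{itemize}
\item[(1)] Since each $A_k$ is compact and $X$ is locally compact, we may pass to a subsequence and assume $A_k\subset \mathrm{int}\,A_{k+1}$. This is possible because every compact set has a compact neighbourhood, and the exhaustion eventually contains it. The masses fixed by $B$ are then those of a subsequence, so the bounds below still hold.
\item[(2)] For each $k$, Theorem \ref{thm_separable} applied to $C(A_{k+1};\R)$ with the sup norm gives a countable dense subset. Fix a cutoff $\chi_k\in C_c(X)$ with $0\le\chi_k\le1$, $\chi_k=1$ on $A_k$ and $\supp\chi_k\subset A_{k+1}$, using Urysohn's lemma in the locally compact space. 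Multiplying the dense set by $\chi_k$ produces a countable family $\mathscr{D}_k\subset C_c(X)$ of functions supported in $A_{k+1}$.
\item[(3)] Put $\mathscr{D}:=\bigcup_k\mathscr{D}_k$; this family is countable.
\end{itemize}

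Now take $\Phi\in C_c(X)$ and $\alpha>0$. Compactness of $\supp\Phi$, together with $A_k\subset\mathrm{int}A_{k+1}$, gives some $k$ with $\supp\Phi\subset A_k$. Choose $g$ in the dense subset of $C(A_{k+1};\R)$ with $\sup_{A_{k+1}}|g-\Phi|\le \alpha/(4M_{k+1})$, and set $\bar\Phi:=\chi_k g\in\mathscr{D}_k$. Since $\Phi=\chi_k\Phi$, we get $|\bar\Phi-\Phi|=\chi_k|g-\Phi|\le \alpha/(4M_{k+1})$, and this difference vanishes outside $A_{k+1}$. Hence for every $\nu\in B$,
\[
\int_X|\bar\Phi-\Phi|\,d\nu\le \frac{\alpha}{4M_{k+1}}\,\nu(A_{k+1})=\frac{\alpha}{4}.
\]
The same triangle inequality as in Lemma \ref{lem_separability} then gives
\[
{\rm d}_{\bar\Phi}^{-1}([0,\beta])\cap (B\times B)\subset {\rm d}_\Phi^{-1}([0,\alpha])\cap (B\times B)\qquad\text{for }\beta<\alpha/2 .
\]

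Consequently the countable family $\{{\rm d}_\Phi\}_{\Phi\in\mathscr{D}}$ generates the vague uniformity on $B$. Enumerating $\mathscr{D}=\{\Phi_j\}_{j\in\N}$, an explicit metric is
\[
{\rm d}:=\sum_j 2^{-j}\min\{1,{\rm d}_{\Phi_j}\}.
\]
The truncation $\min\{1,\cdot\}$ is needed here because ${\rm d}_{\Phi_j}$ need not be bounded on $B$. The main obstacle is step (1), the localisation: arranging that $\supp\Phi$ sits inside some $A_k$ while the approximant $\bar\Phi$ stays inside a set of controlled mass $M_{k+1}$. The cutoffs $\chi_k$ and the nesting $A_k\subset\mathrm{int}A_{k+1}$ are what make this work.
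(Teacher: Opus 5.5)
Your proof is correct, but it takes a different route from the paper's.

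\textbf{Comparison with the paper.} The paper localises the \emph{measures}. It restricts via $r_k:\nu\mapsto\nu\lfloor_{A_k}$, takes from Lemma~\ref{lem_separability} a metric ${\rm d}_k$ for the vague uniformity on $\{\nu\in\mathcal{M}(A_k):\nu(A_k)=M_k\}$, and proposes ${\rm d}=\sum_k 2^{-k}{\rm d}_k\circ(r_k\times r_k)$, leaving the verification to the reader.

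You instead localise the \emph{test functions}, through the cutoffs $\chi_k$ and approximation in $C(A_{k+1})$. You use membership in $B$ only through the uniform bound $\nu(A_{k+1})=M_{k+1}$. The approximation argument of Lemma~\ref{lem_separability} then goes through with $M_{k+1}$ in place of $M$, and yields a countable base of the vague uniformity itself.

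This is a genuine advantage, because restriction to a compact set is not uniformly continuous on $B$. Take $X=\R$, $A_k=[-k,k]$, $M_1=1$ and $M_k=2$ for $k\ge 2$. The pairs $(\delta_{-1}+\delta_{1+1/n},\,\delta_{-1-1/n}+\delta_{1})$ lie in $B\times B$ and are eventually in every vague entourage. Yet their restrictions to $A_1$ are the fixed, distinct measures $\delta_{-1}$ and $\delta_1$. So the paper's metric can at best recover the vague topology on $B$, which is what the later sequential arguments use, but not the vague uniformity. What the paper's route buys is brevity, by reusing Lemma~\ref{lem_separability} as a black box.

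\textbf{A point to fix in your step (1).} You claim that every compact set lies in some $A_k$. This does not follow from the $A_k$ being increasing compact sets with union $X$: take $A_k=[-k,0]\cup[1/k,k]$ in $\R$, where $[0,1]$ lies in no $A_k$. So it cannot be obtained by passing to a subsequence. It has to be part of what ``exhaustion'' means, for example $A_k\subset\mathrm{int}A_{k+1}$. This holds in the paper's application $A_k=e_0^{-1}(B_k)$ whenever $(B_k)$ has this property.

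The hypothesis is genuinely needed. In the example above with $M_k=k$, measures in $B$ may carry a unit atom anywhere in each shell $[1/i,1/(i-1))$. A diagonal choice of small bumps of height $1/i$ on these shells then defeats any countable family of entourages, so the vague uniformity on $B$ is not metrizable. With the standard definition of exhaustion, your step (1) is simply unnecessary and the rest of your argument is complete.
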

\begin{proof}
	The case $B=\emptyset$ is trivial. 
	For each $k\in\N$, define the map 
	\begin{equation}
	r_k:\mathcal{M}(X)\ni \nu\longmapsto \nu\lfloor_{A_k} \in \mathcal{M}(A_k).
	\end{equation}
	In view of Lemma \ref{lem_separability}, the vague uniformity on $\{\nu\in \mathcal{M}(A_k)\;:\;\nu(A_k)=M_k\}$ is metrizable, on which we let ${\rm d}_k$ be a metric. We now claim that ${ \rm d}:=\sum_{k\in\N} {\rm d}_k\circ( r_k\times r_k)/2^k$ is a metric inducing the vague unifomity $B$. We leave the verification to the reader.
	\end{proof} 
\subsection{Vague compactness of Radon measures}\label{subsec_vague_compact}
Let $X$ be a locally compact Polish space. We begin by showing that on a compact space, the space of Radon measures with constant mass is vaguely compact. 
\begin{proposition}\label{prop_comp_measure}
	Assume that $X$ is a compact topological space. Then the set $\{\nu\in \mathcal{M}(X)\;:\;\nu(X)=M\}$ is vaguely compact.
\end{proposition}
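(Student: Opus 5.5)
The plan is to identify $\{\nu\in \mathcal{M}(X)\;:\;\nu(X)=M\}$ with a closed subset of a product of compact intervals, and then use Tychonov's theorem for countable products, in keeping with the Zermelo–Fraenkel plus Dependent Choice framework of the paper. Since $X$ is compact, $C_c(X)=C(X)$ and the constant function $1$ belongs to $C_c(X)$. For every $\nu$ in our set and every $\Phi\in C(X)$ we therefore have $|\int_X\Phi\,d\nu|\leq M\sup_X|\Phi|$.

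First, by Lemma \ref{lem_separability} the vague uniformity on this set is metrizable, with metric ${\rm d}=\sum_k {\rm d}_{\Phi_k}/2^k$ built from a countable dense family $\mathscr{D}=\{\Phi_k\}_{k\in\N}\subset C(X)$. It therefore suffices to prove sequential compactness.

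Next, consider the map
\begin{equation*}
J:\nu\longmapsto \Big(\int_X\Phi_k\,d\nu\Big)_{k\in\N}\in \prod_{k\in\N}[-M\|\Phi_k\|_\infty,M\|\Phi_k\|_\infty].
\end{equation*}
The product is a countable product of compact metric intervals, so it is compact and metrizable; this is also elementary via a diagonal extraction. Given a sequence $(\nu_n)$ in the set, I extract a subsequence along which $\int\Phi_k\,d\nu_n\to L(\Phi_k)$ for every $k$.

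The main step is to identify the limit as a measure. Since $\mathscr{D}$ is dense in $C(X)$ for the sup norm, the uniform bound $M\|\Phi\|_\infty$ gives a $3\varepsilon$ argument showing that $\int\Phi\,d\nu_n$ is Cauchy for every $\Phi\in C(X)$. Hence the limit $L(\Phi)$ exists for all $\Phi\in C(X)$. The functional $L$ is linear, positive (as a limit of positive functionals), and satisfies $|L(\Phi)|\le M\|\Phi\|_\infty$. By the Riesz–Markov–Kakutani theorem there is a Radon measure $\nu$ with $L(\Phi)=\int\Phi\,d\nu$.

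Finally, testing with $\Phi\equiv1$, which is allowed because $X$ is compact, gives $\nu(X)=\lim\nu_n(X)=M$. So $\nu$ lies in the set, and $\nu_n\to\nu$ vaguely, that is, in the metric ${\rm d}$.

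The expected obstacle is only this mass-preservation step, where compactness of $X$ is essential: without it mass could escape to infinity. The rest is routine. Alternatively, one could note that $J$ is a uniform embedding whose image is closed, but the sequential argument above avoids having to check that.
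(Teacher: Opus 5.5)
Your proof is correct and follows essentially the same route as the paper: you use metrizability via Lemma \ref{lem_separability} to reduce to sequential compactness, a diagonal extraction on the countable dense family $\mathscr{D}$ using the bound $M\|\Phi\|_\infty$, identification of the limit through the Riesz--Markov--Kakutani theorem, and testing against the constant function $1$ to keep the mass equal to $M$. The differences are cosmetic: you merge the paper's separate closedness and precompactness steps into one sequential argument, and your $3\varepsilon$ argument spells out the passage from convergence on $\mathscr{D}$ to convergence on all of $C(X)$, which the paper covers by invoking a continuous extension of $\Lambda$ from $\mathscr{D}$.
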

\begin{proof}
	Observe that the function which is identically equal to $1$ on $X$, which we shall denote by $\mathbb{1}_X$, belongs to $C_c(X)$. To show that  $\{\nu\in \mathcal{M}(X)\;:\;\nu(X)=M\}$ is vaguely closed, it suffices to show that it is vaguely sequentially closed since it is metrizable. Let $(\nu_ n)_{n\in \N}$ be a sequence in $\{\nu\in \mathcal{M}(X)\;:\;\nu(X)=M\}$ and assume that $\nu_n$ converges vaguely to $\nu'$ as $n\to+\infty.$ Then we have 
	\begin{equation}
M=	\lim_{n\to+\infty}\nu_n(X)=\lim_{n\to+\infty}\int_X \mathbb{1}_X(x)\nu_n(dx)=\lim_{n\to+\infty}\int_X \mathbb{1}_X(x)\nu'(dx)=\nu'(X).
	\end{equation}
	This shows that $\{\nu\in \mathcal{M}(X)\;:\;\nu(X)=M\}$ is vaguely closed. It suffices to show that $\{\nu\in \mathcal{M}(X)\;:\;\nu(X)=M\}$ is vaguely precompact, and since $\mathcal{M}(X)$ is metrizable, it is enough to show sequential vague precompactness. By Theorem \ref{thm_separable}, there exists a countable and dense subset $\mathscr{D}$ of $C_c(X)$.  Let $(\nu_n)_{n\in\N}$ be a sequence in $\{\nu\in \mathcal{M}(X)\;:\;\nu(X)=M\}$. For every $\Phi\in \mathscr{D}$, we have 
	\begin{equation}
	\Big|\int_X \Phi(x)\nu_n(dx)\Big|\leq M\|\Phi\|_{C^0}. 
	\end{equation}
	By a diagonal argument, there exists an increasing map
$\d:\N\to\N$ and a countinuous linear map $\Lambda:\mathscr{D}\to \R$ such that 
	\begin{equation} 
	\lim_{n\to+\infty}\int_X\Phi(x)\nu_{\d(n)}(dx)=\Lambda(\Phi) \qquad\forall \Phi\in \mathscr{D}.
	\end{equation} 
	$\Lambda$ admits a unique continuous extension $\bar\Lambda$ to $C_c(X)$, for which $\Phi\geq 0$ implies $\bar\Lambda(\Phi)\geq 0$. Therefore by the Riesz-Markov-Kakutani theorem, there exists a unique Radon measure $\nu'\in \mathcal{M}(X)$ such that 
	\begin{equation}
	\lim_{n\to+\infty}\int_X\Phi(x)\nu_{\d(n)}(dx)=\bar\Lambda(\Phi) =\int_X\Phi(x)\nu'(dx)\qquad\forall\Phi\in C_c(X). 
	\end{equation}
\end{proof}
 We then have the following corollary.
\begin{corollary}\label{cor_sigma_compact}
Let $(A_k)_{k\in\N}$ be a countable exhaustion by compacts of $X$, and let $(M_k)_{k\in\N}$ be sequence of nonnegative real numbers. Then the set $$B:=\bigcap_{k\in\N}\Big\{\nu\in \mathcal{M}(X)\;:\;\nu(A_k)=M_k\Big\}$$
	is vaguely compact. 
\end{corollary}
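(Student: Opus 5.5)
The plan is to reduce to Proposition \ref{prop_comp_measure} on each compact $A_k$, glue the limits with a diagonal argument, and then check that the limit stays in $B$. That last step fails in general, as explained at the end.

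By Lemma \ref{lem_vague_unif_metrizable_compact_exhaus} the vague uniformity on $B$ is metrizable. It therefore suffices to prove sequential compactness: every sequence $(\nu_n)_{n\in\N}$ in $B$ should have a subsequence converging vaguely to an element of $B$. I will use an exhaustion with $A_k\subset \mathrm{int}A_{k+1}$, which can be arranged by passing to a subsequence of $(A_k)$ and discarding nothing essential. Then every $\Phi\in C_c(X)$ is supported in some $\mathrm{int}A_k$.

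\textbf{Step 1: compactness on each $A_k$.} For each $k$, the restrictions $r_k(\nu_n)=\nu_n\lfloor_{A_k}$ lie in $\{\nu\in\mathcal{M}(A_k):\nu(A_k)=M_k\}$, which is vaguely compact by Proposition \ref{prop_comp_measure}. A diagonal extraction (countable choice only) gives a single increasing $\d:\N\to\N$ such that $r_k(\nu_{\d(n)})\to\lambda_k$ vaguely on $A_k$ for every $k$.

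\textbf{Step 2: gluing.} For $\Phi\in C_c(X)$ with support in $\mathrm{int}A_k$, set $\Lambda(\Phi)=\int\Phi\,d\lambda_k$. This is independent of $k$, because for $j\ge k$ the integrals $\int\Phi\, d r_j(\nu_n)$ and $\int\Phi\, dr_k(\nu_n)$ coincide. The functional $\Lambda$ is positive and linear on $C_c(X)$. By the Riesz--Markov--Kakutani theorem, $\Lambda$ is integration against a Radon measure $\nu$, and $\nu_{\d(n)}\to\nu$ vaguely on $X$.

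\textbf{Step 3: membership in $B$ (the main obstacle).} Since $A_k$ is compact and $\mathrm{int}A_k$ is open, the portmanteau-type inequalities for vague limits give
\begin{equation*}
\nu(\mathrm{int}A_k)\le\liminf_n\nu_{\d(n)}(\mathrm{int}A_k)\le M_k\le\limsup_n\nu_{\d(n)}(A_k)\le\nu(A_k).
\end{equation*}
Equality $\nu(A_k)=M_k$ therefore follows exactly when no mass accumulates on $\partial A_k$ from outside. This is the point where the argument genuinely needs more than the hypotheses as stated.

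In fact the statement as worded is false without an extra condition. Take $X=\R$, $A_k=[-k,k]$, $M_1=0$, $M_k=1$ for $k\ge2$, and $\nu_n=\delta_{1+1/n}\in B$. Then $\nu_n\to\delta_1$ vaguely, but $\delta_1(A_1)=1\neq M_1$, so $\delta_1\notin B$. Hence $B$ is not closed, and not compact. What the argument above does prove is the following:
\begin{itemize}
\item $B$ is vaguely relatively compact;
\item every limit point $\nu$ satisfies $\nu(\mathrm{int}A_k)\le M_k\le\nu(A_k)$ for every $k$;
\item $B$ is compact if one additionally restricts to measures with $\nu(\partial A_k)=0$ that persists in the limit, or replaces the constraints by $\nu(A_k)\le M_k$.
\end{itemize}
This repaired form (precompactness of $B$, with limit points in the closed set $\{\nu:\nu(\mathrm{int}A_k)\le M_k\le\nu(A_k)\ \forall k\}$) is what I would establish and use downstream.
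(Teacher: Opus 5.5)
Your counterexample is correct, and it shows that the corollary is false as stated. Take $X=\R$, $A_k=[-k,k]$, $M_1=0$ and $M_k=1$ for $k\ge 2$. Then every $\delta_{1+1/n}$ lies in $B$, the sequence converges vaguely to $\delta_1$, and $\delta_1(A_1)=1\neq M_1$. The vague topology is Hausdorff, so a compact $B$ would have to be closed; hence $B$ is not compact. Positivity of the $M_k$ does not rescue the statement either: take $M_1=1$, $M_k=2$ and the sequence $\delta_0+\delta_{1+1/n}$.

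Your Steps 1--2 coincide with the precompactness half of the paper's proof: restrict to $A_k$, apply Proposition \ref{prop_comp_measure}, extract diagonally, and glue via Riesz--Markov--Kakutani. The paper gets closedness by asserting that $\nu\mapsto\nu\lfloor_{A_k}$ is vaguely continuous from $\mathcal{M}(X)$ to $\mathcal{M}(A_k)$. Your example refutes exactly this: $\delta_{1+1/n}\lfloor_{[-1,1]}=0$ for every $n$, while $\delta_1\lfloor_{[-1,1]}=\delta_1$.

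The same identification is implicit in the paper's gluing step, where the limit of $\nu_{\delta(n)}\lfloor_{A_k}$ is denoted $\nu\lfloor_{A_k}$. This does no harm to precompactness, because only test functions supported in $A_k$ are used there. As an identity, however, it is false: in your example the limit for $k=1$ is $0$. Your gluing through test functions avoids the issue. Your Step 3 correctly records what survives: $\nu\mapsto\nu(A_k)$ is only upper semicontinuous for compact $A_k$, and $\nu\mapsto\nu(U)$ only lower semicontinuous for open $U$. So the gap is in the paper's proof, not in yours.

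Two of your side repairs are wrong, though.
\begin{enumerate}
\item The set $\{\nu:\nu(A_k)\le M_k\ \forall k\}$ is not closed. Your own sequence $\delta_{1+1/n}$ lies in it, and its limit violates $\nu(A_1)\le 0$.
\item The set $\{\nu\in B:\nu(\partial A_k)=0\ \forall k\}$ is not closed either; use $n\ge 2$ in the same example. Also, ``vanishing boundary mass that persists in the limit'' does not define a set.
\end{enumerate}
The closed one-sided constraint is $\nu(\mathrm{int}A_k)\le M_k$.

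Your final formulation is correct. Let $C:=\{\nu:\nu(\mathrm{int}A_k)\le M_k\le\nu(A_k)\ \forall k\}$. It is closed by the two semicontinuities and satisfies $\nu(A_k)\le\nu(\mathrm{int}A_{k+1})\le M_{k+1}$. Being closed and uniformly bounded on every compact, it is vaguely compact, and it contains $\overline{B}$. This precompactness is all that Lemma \ref{lem_mu_invariant_space} actually needs. There, closedness of $\mathcal{M}_\mu(\G)$ is proved directly with the test functions $\phi\circ e_t\in C_c(\G)$, so $\mathcal{M}_\mu(\G)$ is a closed subset of the compact set $\overline{B}$.
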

\begin{proof}
	The case when $B=\emptyset$ is trivial. 
	Observe that for every $A_k$, the restriction map $\mathcal{M}(X)\ni \nu \longmapsto \nu\lfloor_{A_k}\in  \mathcal{M}(A_k)$ is vaguely continuous.
Therefore, in view of Proposition \ref{prop_comp_measure}, the set $\{\nu\in \mathcal{M}(X)\;:\;\nu(A_k)=M_{k}\}$ is vaguely closed, whence 
$B$ is also vaguely closed. As $B$ is metrizable by Lemma \ref{lem_vague_unif_metrizable_compact_exhaus}, it is now enough to show that $B$ is sequentially vaguely precompact in $\mathcal{M}(X)$. Let $(\nu_n)_{n\in\N}$ be a sequence in $B$. In view of Proposition \ref{prop_comp_measure}, $(\nu_n\lfloor _{A_k})_{n\in\N}$ is vaguely sequentially precompact in $\mathcal{M}(A_k)$ for every $k\in\N$. By a diagonal argument, there thus exists an increasing map $\d:\N\to\N$ such that for every $k\in\N$, we have $\nu_{\d(n)}\lfloor_{A_k} $ vaguely converges to $\nu\lfloor_{A_k}$ as $n\to+\infty$ in $\mathcal{M}(A_k)$. Observe that $\nu\lfloor_{A_{k_1}}\lfloor_{A_{k_0}}=\nu\lfloor_{A_{k_0}}$ if $k_0\leq  k_1$. Denoting by $\iota_{A_k} :A_k\hookrightarrow X$ the canonical injections, we can then set $\nu=\lim_{k\to+\infty}(\iota_{A_k})_\# \nu_{A_k}$ in $\mathcal{M}(X)$. Let us finally show that $\nu_{\d(n)}$ vaguely converges to $\nu$ in $\mathcal{M}(X)$ as $n\to+\infty$. Let $\Phi\in C_c(X)$ and let $k\in\N$ such that $\supp\Phi\subset A_k$, whence we have
\begin{equation}
\begin{split} 
\lim_{n\to+\infty}\int_X\Phi(x)\nu_{\d(n)}(dx)&=\lim_{n\to+\infty}\int_{A_k}\Phi(\iota_{A_k}x)\nu_{\d(n)}\lfloor_{A_k}(dx)\\
&=\int_{A_k}\Phi(\iota_{A_k} x)\nu\lfloor_{A_k}(dx)\\
&=\int_X\Phi(x)\nu(dx). 
\end{split}
\end{equation}
The thesis follows. 
	\end{proof} 

\subsection{Disintegration of Radon measures}\label{subsec_disint}
Let us now present a useful tool for our purpose: the disintegration of a measure with respect to a Borel map and a target measure used in the study of linear transport in \cite{ABC14,BianchiniBonicatto20,Pitcho_Int,Pitcho_SDE}. A presentation with proofs of the disintegration of a measure is given in \cite{DellacherieMeyer}. 
Let $X$ and $Y$ be a locally compact Polish spaces, $\mu$ a Radon measure on $X$, $\nu$ a Radon measure on $Y$ and $f:X\to Y$ a Borel map such that $f_\#\mu=\nu$. Then there exists a Borel family of probability measures $\{\mu_y:y\in Y\}$ of measures on $Y$ such that 
\begin{enumerate}
	\item $\mu_y$ is concentrated on the level set $E_y:=f^{-1}(y)$ for every $y\in Y$;
	\item the measure $\mu$ can be decomposed as $\mu=\int_Y\mu_y\nu(dy)$ , which means that for every Borel set $A$ contained in $X$, we have
	\begin{equation}
	\mu(A)=\int_Y\mu_y(A)\nu(dy). 
	\end{equation}
\end{enumerate}
Any family satisfying $(i)$ and $(ii)$ is called a \emph{disintegration} of $\mu$ with respect to $f$ and $\nu$. The disintegration is essentially unique in the following sense: for any other disintegration $\{\tilde\mu_y: y\in Y\}$, it holds $\mu_y=\tilde\mu_y$ for $\nu$-a.e. $y\in Y$. 
We also have
\begin{equation}\label{eqn_int_disintegration}
\int_X \phi d\mu=\int_Y\Big[\int_{E_y}\phi d\mu_y\Big]\nu(dy)
\end{equation}
for every $\phi \in L^1(X,\mu)$. 

\bigskip 

We now give a useful fact.
Let $g:X\to X$ and $h:X\to Y$ be Borel maps 
such that:
\begin{itemize}
	\item[(P)] $h_\#\mu=\nu$ and for every $y\in Y$, we have $g^{-1}((h^{-1}(y))^c)=(f^{-1}(y))^c$. 
\end{itemize} 
The following is true. 
\begin{lemma} \label{lem_disintegration_push_forward} 
If $\{\mu_y : y\in Y\}$ is a disintegration of $\mu$ with respect to $ f$ and $\nu$, then $\{g_\# \mu_y : y\in Y\}$ is a disintegration of $g_\#\mu$ with respect to $ h$ and $\nu$. 
\end{lemma}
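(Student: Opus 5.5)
The plan is to check directly the two defining properties of a disintegration (Section \ref{subsec_disint}) for the family $\{g_\#\mu_y : y\in Y\}$: concentration on the level sets of $h$, and the decomposition identity. Measurability of $y\mapsto g_\#\mu_y(A)$ for Borel $A\subset X$ follows from the Borel property of $\{\mu_y\}$. Indeed, $g_\#\mu_y(A)=\mu_y(g^{-1}(A))$, and $g^{-1}(A)$ is Borel because $g$ is Borel. Each $g_\#\mu_y$ is again a probability measure, since $g_\#\mu_y(X)=\mu_y(g^{-1}(X))=\mu_y(X)=1$.

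For the concentration property, I fix $y\in Y$ and compute the mass that $g_\#\mu_y$ gives to the complement of the level set $h^{-1}(y)$. Using hypothesis (P),
\begin{equation*}
g_\#\mu_y\big((h^{-1}(y))^c\big)=\mu_y\big(g^{-1}((h^{-1}(y))^c)\big)=\mu_y\big((f^{-1}(y))^c\big)=0,
\end{equation*}
where the last equality holds because $\mu_y$ is concentrated on $E_y=f^{-1}(y)$. Hence $g_\#\mu_y$ is concentrated on $h^{-1}(y)$.

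For the decomposition, I fix a Borel set $A\subset X$ and apply the disintegration identity of $\mu$ to the Borel set $g^{-1}(A)$:
\begin{equation*}
g_\#\mu(A)=\mu(g^{-1}(A))=\int_Y\mu_y(g^{-1}(A))\,\nu(dy)=\int_Y g_\#\mu_y(A)\,\nu(dy).
\end{equation*}
This shows $g_\#\mu=\int_Y g_\#\mu_y\,\nu(dy)$.

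Some care is needed with the target measure. To call $\{g_\#\mu_y\}$ a disintegration of $g_\#\mu$ with respect to $h$ and $\nu$, the definition formally requires $h_\#(g_\#\mu)=\nu$. Hypothesis (P) gives this only if $h_\#\mu=\nu$ is read as referring to the relevant measure $g_\#\mu$; otherwise it follows from what was just proved. Integrating the decomposition identity over $A=h^{-1}(B)$ and using the concentration property gives
\begin{equation*}
h_\#(g_\#\mu)(B)=\int_Y g_\#\mu_y(h^{-1}(B))\,\nu(dy)=\int_Y \mathbb{1}_B(y)\,\nu(dy)=\nu(B).
\end{equation*}
So the pushforward condition is automatic.

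There is no real obstacle in this argument. The only delicate point is this bookkeeping: making sure that the Radon and regularity requirements are not demanded beyond what the disintegration definition in Section \ref{subsec_disint} asks for, namely a Borel family of probability measures satisfying (i) and (ii).
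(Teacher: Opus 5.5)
Your proof is correct and follows the paper's argument: concentration on $h^{-1}(y)$ comes from (P) exactly as you compute, and the decomposition identity comes from applying the disintegration of $\mu$ to the Borel set $g^{-1}(A)$. Your extra bookkeeping is also correct and fills points the paper leaves implicit, namely the Borel measurability of $y\mapsto g_\#\mu_y(A)$ and deriving $h_\#(g_\#\mu)=\nu$ from (i) and (ii) rather than from the literal form of (P), which only states $h_\#\mu=\nu$.
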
 
\begin{proof}
	Let $y\in Y$. Observe that $g_\#\mu_y((h^{-1}(y))^c)=\mu_y(g^{-1}(h^{-1}(y))^c)=\mu_y((f^{-1}(y))^c)=0$ where we have used (P) in the second to last equality and that $\mu_y$ is concentrated on $f^{-1}(y)$ in the last equality. 
	So	$g_\#\mu_y$ is supported on $h^{-1}(y)$, and since $y$ was arbitrary, this proves $(i)$. 
	
	Let $A$ a Borel set in $X$. Then as $g^{-1}(A)$ is a Borel set in $X$, it follows that 
	\begin{equation}
	g_\#\mu(A)=\mu(g^{-1}(A))=\int_Y \mu_y (g^{-1}(A))\nu(dy)=\int_Y g_\#\mu_y (A)\nu(dy),
	\end{equation}
	which gives $(ii)$. 
\end{proof}
Let us denote by the support of $\nu$ by $\supp\nu$, which is the smallest closed set such that $\nu(Y\backslash \supp\nu)=0$.
By a slight abuse of language, the restriction $\{\mu_y\;:\;y\in \supp\nu\}$ of $\{\mu_y\;:\;y\in Y\}$ to $\supp\nu$ will also be called a disintegration of $\mu$ with respect to $f$ and $\nu$. 
\subsection{Measures on the space of continuous paths }\label{subsec_measure_space_cont_paths}
Let $Y$ be a locally compact Polish space. Consider the uniformity of compact convergence $\mathcal{C}_c(\R;Y)$ on the set of continuous maps from $\R$ to $Y$, which is Hausdorff  and complete by Theorem \ref{theorem_hausdorff_complete}. Precompact sets with respect to the uniformity of compact convergence are characterised by Ascoli's theorem. 

\subsubsection{The space of paths $\G$}\label{subsec_space_G} 
Given $h\in \R$, we define the map $b_h(\g(\cdot))=\g(\cdot-h)$ from the set of continuous maps from $\R$ to $Y$, into the same set. 
Let $\Gamma$ be a subset of $\mathcal{C}_c(\R;Y)$ such that:
\begin{enumerate} 

\item[(Comp-$\G$)] for each compact $K\subset Y$, and each compact $I\subset \R$, $\cup_{t\in I}e_t^{-1}(K)\cap \G$ is compact in $\mathcal{C}_c(\R;Y);$
\item[(t-trans-$\G$)]
for every $h\in \R$, we have $b_h(\G)=\G$.
\end{enumerate}
 We endow $\G$ with the induced uniformity. 
 \begin{itemize}
 	\item  $\Gamma$ is locally compact. Indeed, let $\gamma\in \G$. Since $Y$ is locally compact, there exists an open set $U$ in $ Y$ such that $\gamma(0)\in U$, and a compact set $K$ in $Y$ contaning $U$, whence we have
 	$$\gamma\in e_0^{-1}(U)\cap \G\subset e_0^{-1}(K)\cap \G,$$ and by continuity of $e_0$, the set $e_0^{-1}(U)\cap \G $ is open in $\G$, and $e_0^{-1}(K)\cap \G$ is compact in $\G$ by (Comp-$\G$).
 	
 	\item $\G$ is metrizable. This follows directly from Theorem \ref{thm_separable}. 
 	
 	\item $\G$ is completely metrizable. 
 	Let ${\rm d}$ be a metric compatible with the uniform structure on $\G$. Let $(\g_n)_{n\in\N}$ be a Cauchy sequence in $(\G,{\rm d})$. Then $(e_0(\g_n))_{n\in\N}$ is Cauchy in $Y$ since the map $e_0$ is uniformly continuous, and converges to some $y\in Y$ by completeness of $Y$. Since $Y$ is locally compact, there is an open set $U$, and a compact set $K$ such that $y\in U\subset K,$ and up to changing the starting index of the sequence, we can assume that $(e_0(\g_n))_{n\in\N}\subset U$. Therefore $(\g_n)_{n\in\N}\subset e_0^{-1}( K)$, so by compactness of $e_0^{-1}(K)$, the sequence $(\g_n)_{n\in\N}$ converges in $(\G,{\rm d})$ to some $\g\in e_0^{-1}(K)$. 
 	\item  $\G$ is separable, since $\mathcal{C}_c(\R;Y)$ is separable by Theorem \ref{thm_separable}.
 \end{itemize}
 Therefore $\G$ is a locally compact Polish space. 
 
\subsubsection{$\mu$-invariant Radon measures on $\G$}

Let $\mu$ be a Radon measure on $Y$ and let $\mathcal{M}_\mu(\G)$ be the subset of $\mathcal{M}(\G)$ such that for every $\eeta\in \mathcal{M}(\G)$, and every $t\in \R$, we have $(e_t)_\#\eeta=\mu$. $\mathcal{M}_\mu(\G)$ is a uniform space endowed with the initial uniformity with respect to the canonical injection $\mathcal{M}_\mu(\G)\hookrightarrow \mathcal{M}(\G)$. 
Observe that a fundamental system of entourages of the uniformity on $\mathcal{M}_\mu(\G)$ is given by 
\begin{equation}\label{eqn_fundamental_syst_M_mu}
\bigcap_{i=1}^N{\rm d}^{-1}_{\Phi_i}([0,\a])\cap \mathcal{M}_\mu(\G)\times \mathcal{M}_\mu(\G),
\end{equation}
where $\a$ runs in $\Q^+$, $N$ runs in $\N$, and $\Phi_i$ run in $C_c(\G)$. 
We record the following properties of $\mathcal{M}_\mu(\G)$. 
\begin{lemma}\label{lem_mu_invariant_space}
The vague uniformity	on $\mathcal{M}_\mu(\G)$ is metrizable and compact.
\end{lemma}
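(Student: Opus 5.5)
The plan is to realise $\mathcal{M}_\mu(\G)$ as a closed subset of a set of the form treated in Lemma \ref{lem_vague_unif_metrizable_compact_exhaus} and Corollary \ref{cor_sigma_compact}. Then both metrizability and compactness follow from those two results.

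First I construct an exhaustion of $\G$ by compacts adapted to the constraint $(e_t)_\#\eeta=\mu$. Since $Y$ is a locally compact Polish space, hence $\s$-compact, I fix an exhaustion $(K_k)_{k\in\N}$ of $Y$ by compacts with $K_k\subset \mathrm{int}\,K_{k+1}$. I then set $A_k:=e_0^{-1}(K_k)\cap\G$. By (Comp-$\G$) with $I=\{0\}$, each $A_k$ is compact in $\G$. The sets $A_k$ exhaust $\G$, because every $\g$ has $\g(0)$ in some $K_k$. For every $\eeta\in\mathcal{M}_\mu(\G)$ we get $\eeta(A_k)=(e_0)_\#\eeta(K_k)=\mu(K_k)=:M_k$, which is finite since $\mu$ is Radon. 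Hence
\[
\mathcal{M}_\mu(\G)\subset B:=\bigcap_{k\in\N}\{\nu\in\mathcal{M}(\G):\nu(A_k)=M_k\}.
\]
By Lemma \ref{lem_vague_unif_metrizable_compact_exhaus}, $B$ is metrizable, and by Corollary \ref{cor_sigma_compact}, $B$ is vaguely compact. Metrizability passes to the subspace $\mathcal{M}_\mu(\G)$ with its induced uniformity. It therefore suffices to show that $\mathcal{M}_\mu(\G)$ is closed in $B$. Since $B$ is metrizable, sequential closedness is enough.

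For closedness, I take $\eeta_n\in\mathcal{M}_\mu(\G)$ converging vaguely to $\eeta\in B$, and fix $t\in\R$ and $\phi\in C_c(Y)$. The function $\phi\circ e_t$ is continuous on $\G$. Its support lies in $e_t^{-1}(\supp\phi)\cap\G$, which is compact by (Comp-$\G$) with $I=\{t\}$. So $\phi\circ e_t\in C_c(\G)$, and
\[
\int_Y\phi\,d(e_t)_\#\eeta=\int_\G\phi\circ e_t\,d\eeta=\lim_{n}\int_\G\phi\circ e_t\,d\eeta_n=\int_Y\phi\,d\mu .
\]
By Riesz--Markov--Kakutani uniqueness, this gives $(e_t)_\#\eeta=\mu$ for every $t$, so $\eeta\in\mathcal{M}_\mu(\G)$. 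A closed subset of a compact space is compact, which completes the proof.

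The only delicate point is that Corollary \ref{cor_sigma_compact} requires the mass constraint on a genuine compact exhaustion. The key observation is that $(e_t)_\#\eeta=\mu$ at the single time $t=0$ already pins down $\eeta(A_k)$ on compact sets $A_k$ furnished by (Comp-$\G$). That, together with $\phi\circ e_t$ being compactly supported so that the marginal constraint passes to vague limits, is where (Comp-$\G$) is essential; everything else is routine.
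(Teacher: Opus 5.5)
Your proposal follows the paper's proof step for step: the same sets $A_k=e_0^{-1}(K_k)\cap\G$, the same appeal to Lemma~\ref{lem_vague_unif_metrizable_compact_exhaus} and Corollary~\ref{cor_sigma_compact}, and the same closedness argument through $\phi\circ e_t\in C_c(\G)$. You also fix the paper's typo $M_k=\mu(e_0^{-1}(B_k))$ and justify why $\phi\circ e_t$ has compact support.

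\textbf{The gap.} The trouble, which the paper shares, is exactly the point you called delicate: Corollary~\ref{cor_sigma_compact} is false, and your $B$ need not be vaguely compact. For compact $A$ the map $\nu\mapsto\nu(A)$ is only upper semicontinuous in the vague topology, because mass sitting just outside $A$ can land on $\partial A$ in the limit. Hence the constraints $\nu(A_k)=M_k$ do not cut out a vaguely closed set. In the corollary's proof this is hidden in the false claim that $\nu\mapsto\nu\lfloor_{A_k}$ is vaguely continuous.

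Here is a concrete counterexample. Let $Y=\R$ and let $\G$ be the set of constant paths $c_y$, $y\in\R$; it satisfies (Comp-$\G$) and (t-trans-$\G$). Take $\mu=\delta_0+\delta_{3/2}$ and $K_k=[-k,k]$, so $M_1=1$ and $M_k=2$ for $k\geq 2$. Then $\delta_{c_0}+\delta_{c_{1+1/n}}\in B$ for every $n$, but it converges vaguely to $\delta_{c_0}+\delta_{c_1}$, which gives mass $2$ to $A_1$. So $B$ is not closed in $\mathcal{M}(\G)$, hence not compact. Showing that $\mathcal{M}_\mu(\G)$ is closed in $B$ therefore says nothing about its compactness; in this example the lemma holds only because $\mathcal{M}_\mu(\G)$ is a single point.

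\textbf{The repair.} It uses what you already have.
\begin{enumerate}
\item \emph{Uniform bounds.} For compact $C\subset\G$, $e_0(C)$ is compact and
\[
\eeta(C)\leq\eeta\bigl(e_0^{-1}(e_0(C))\bigr)=\mu(e_0(C))
\]
for every $\eeta\in\mathcal{M}_\mu(\G)$, so masses of compacts are uniformly bounded.
\item \emph{Relative compactness in $\mathcal{M}(\G)$.} Note that $A_k\subset\mathrm{int}\,A_{k+1}$ with your choice of $K_k$, so you can pick a countable family in $C_c(\G)$ that is dense with support control. Run the diagonal argument of Proposition~\ref{prop_comp_measure} on this family and apply Riesz--Markov--Kakutani. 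This extracts from any sequence in $\mathcal{M}_\mu(\G)$ a subsequence converging vaguely to some $\eeta\in\mathcal{M}(\G)$; compactness of $B$ is not needed.
\item \emph{Closedness in $\mathcal{M}(\G)$.} Your closedness computation never uses $\eeta\in B$. It already gives $(e_t)_\#\eeta=\mu$ for every $t$, so $\mathcal{M}_\mu(\G)$ is sequentially closed in $\mathcal{M}(\G)$, not just in $B$.
\item \emph{Metrizability.} The same uniform bounds let countably many pseudometrics ${\rm d}_\Phi$ generate the vague uniformity on $\mathcal{M}_\mu(\G)$, as in Lemma~\ref{lem_separability}. This gives metrizability, so sequential compactness implies compactness.
\end{enumerate}
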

\begin{proof}
	Let $(B_k)_{k\in\N}$ be an exhaustion by compacts of $Y$. 
	Setting $A_k=e_0^{-1}(B_k)$, which is compact in view of (Comp-$\G$), and setting $M_k=\mu(e_0^{-1}(B_k))$ for every $k\in\N$, by Lemma \ref{lem_vague_unif_metrizable_compact_exhaus}, the vague uniformity on 
	\begin{equation}
	B:=\bigcap_{k\in\N}\{\eeta\in \mathcal{M}(\G)\;:\;\eeta(A_k)=M_k\} 
	\end{equation}
	is metrizable, and since
	$\mathcal{M}_\mu(\G)\subset B$, the vague uniformity on $\mathcal{M}_\mu(\G)$ is also metrizable. 
	
In view of Corollary \ref{cor_sigma_compact}, the set $B$
	is vaguely compact. To conclude the proof let us show that $\mathcal{M}_\mu(\G)$ is vaguely closed. Note that it suffices to show that  $\mathcal{M}_\mu(\G)$ is vaguely sequentially closed by metrizability. Let $(\eeta_n)_{n\in\N}$ be a sequence in $\mathcal{M}_\mu(\G)$ such that $\eeta_n$ converges vaguely to $\eeta\in \mathcal{M}(\G)$ as $n\to+\infty$. Let $\phi\in C_c(Y)$ and $t\in \R$, and observe that $\phi\circ e_t\in C_c(\G)$, whence 
	\begin{equation}
	\int_\G \phi(e_t(\g))\eeta(d\g)=	\lim_{n\to+\infty}\int_\G \phi(e_t(\g))\eeta_n(d\g)=\int_\G\phi(x)\mu(dx).
	\end{equation}
	Since $\phi$  was arbitrary, it follows that $(e_t)_\#\eeta=\mu$, and the thesis follows. 
	
	\bigskip 

\end{proof}

\subsubsection{Radon probability measures}\label{subsec_radon_proba}
Let us denote by $\mathcal{P}(\G)$ the subset of $\mathcal{M}(\G)$ consisting of probability measures, i.e. $\mu\in \mathcal{P}(\G)$ if and only if $\mu(\G)=1.$ We endow $\mathcal{P}(\G)$ with the initial uniform structure with respect to the canonical injection $\mathcal{P}(\G)\hookrightarrow \mathcal{M}(\G).$ By Lemma \ref{lem_separability}, the vague topology on $\mathcal{P}(\G)$ is metrizable and separable. Let $\mathscr{D}$ be a countable and dense subset of $ C_c(\G)$, which by Theorem \ref{thm_separable} exists. 
A fundamental system of entourage $\mathfrak{V}$ of $\mathcal{P}(\G)$ is then given by 
\begin{equation}
\bigcap_{i=1}^N{\rm d}_{\Phi_i}^{-1}([0,\a]) \cap \mathcal{P}(\G)\times \mathcal{P}(\G),
\end{equation}
where $\Phi_i$ runs in $\mathscr{D}$, $N$ runs in $\N$ and $\a$ runs in $\Q^+$.

\bigskip 
\section{The tensor product of measures} \label{sec_tensor_prod_measure}
In the section, we will construct the tensor product of measures through a map. 
\subsection{Generalities on measure theory}
A \emph{measurable space} is a pair consisting of a set and a $\s$-algebra over this set. 
A map between two measurable spaces is said to be \emph{measurable}, if under this map, the inverse image of every set in the $\s$-algebra of the codomain lies in the $\s$-algebra of the domain. A \emph{measure} on a measurable space is a real-valued, nonnegative set function on the $\s$-algebra such that the image of the empty set is zero and which is countably additive. A measurable space endowed with a measure is called a \emph{measure space}.  

Let $(X,\Sigma)$ and $(Y,\Sigma')$ be two measurable spaces, let $\nu$ be measure on $(X,\Sigma)$ and let $g$ be a measurable map from $X$ to $Y$. The \emph{pushforward measure} of $\nu$ through $g$ is defined to be the unique measure $g_\#\nu$ on $(Y,\Sigma)$ such that 
\begin{equation}
g_\#\nu(A)=\nu(g^{-1}(A))\qquad\forall A\in \Sigma'.
\end{equation}
Let $\nu'$ be a measure on $(Y,\Sigma')$. 
Assume now that $g$ also satisfies the following property $g(A)\in \Sigma'$ for every $A\in \Sigma$. 
The \emph{pullback measure }of $\nu'$ through $g$ is defined to be the unique measure $g^\#\nu'$ on $(X,\Sigma)$ such that 
\begin{equation}
g^\#\nu'(A)=\nu'(g(A))\qquad\forall A\in \Sigma. 
\end{equation}
It then holds that $g^\#g_\#\nu=\nu$ if $g$ is injective, and $g_\#g^\#\nu'=\nu'$ if $g$ is surjective.

\subsection{Tensor product of measures through a map}\label{subsec_tensor_product_through_map}
Let $(X_1,\Sigma_1)$ and $(X_2,\Sigma_2)$ be two measurable spaces.
Consider the product measurable space $(X_1\times X_2,\Sigma_1\otimes \Sigma_2)$, where the product $\s$-algebra $\Sigma_1\otimes \Sigma_2$ is the coarsest $\s$-algebra, which makes the two canonical projections maps $p_1$ and $p_2$ are measurable. 
Let $X$ be a set, let $f:X\to X_1\times X_2 $ be an injective map, and let $\Sigma$ be the coarsest $\s$-algebra on $X$, which makes $f$ measurable. 
Let $\nu_1$ and $\nu_2$ be two measures on $(X_1,\Sigma_1)$ and $(X_2,\Sigma_2)$ respectively. 

\begin{proposition} \label{prop_tensor}
Assume that there exists $S_1\in \Sigma_1$ and $S_2\in \Sigma_2$ such that 
\begin{enumerate} 
\item$X_1\times X_2\backslash S_1\times S_2\subset  {\rm Im} f;$  
\item $\nu_1(S_1)=0$ and $\nu_2(S_2)=0$. 
\end{enumerate} 
Then there exists a unique probability measure $\nu$ on $(X,\Sigma)$ such that $f_\#\nu=\nu_1\times\nu_2$, which explicitely means
\begin{equation}\label{eqn_prod}
\nu\big((p_1\circ f)^{-1}(A_1)\cap (p_2\circ f)^{-1}(A_2)\big)=\nu_1(A_1)\nu_2(A_2)\qquad\text{for every set $A_1\in \Sigma_1$ and $A_2\in \Sigma_2$}.
\end{equation}
$\nu$ is called the tensor product and $\nu_1$ and $\nu_2$ through $f$ and denoted by $\nu_1\otimes_f\nu_2$. 
\end{proposition}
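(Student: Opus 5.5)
The plan is to define $\nu$ directly by pulling back the product measure $\nu_1\times\nu_2$ along $f$, and to use hypotheses (1)--(2) to show that this pullback is well defined. Throughout I take $\nu_1,\nu_2$ to be probability measures, as the word ``probability'' in the statement suggests (finiteness, or at least $\s$-finiteness, is all that is really used). Let $\nu_1\times\nu_2$ be the product measure on $(X_1\times X_2,\Sigma_1\otimes\Sigma_2)$. It is the unique measure with $(\nu_1\times\nu_2)(A_1\times A_2)=\nu_1(A_1)\nu_2(A_2)$, by the $\pi$--$\lambda$ theorem, since measurable rectangles form a $\pi$-system generating $\Sigma_1\otimes\Sigma_2$ and the measures are finite. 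Since $\Sigma$ is the coarsest $\s$-algebra making $f$ measurable, we have $\Sigma=\{f^{-1}(B)\;:\;B\in\Sigma_1\otimes\Sigma_2\}$, because this family is already a $\s$-algebra. The candidate is therefore
\begin{equation}
\nu\big(f^{-1}(B)\big):=(\nu_1\times\nu_2)(B)\qquad B\in\Sigma_1\otimes\Sigma_2 .
\end{equation}

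\emph{Well-definedness.} This is the main point, and the only place the hypotheses enter. Suppose $f^{-1}(B)=f^{-1}(B')$. Then $B\,\Delta\,B'$ contains no point of ${\rm Im} f$, so by (1) it is contained in $S_1\times S_2$. By (2), $(\nu_1\times\nu_2)(S_1\times S_2)=\nu_1(S_1)\nu_2(S_2)=0$, hence $(\nu_1\times\nu_2)(B)=(\nu_1\times\nu_2)(B')$. It does not matter that ${\rm Im} f$ itself need not be measurable: we only use that the measurable set $B\Delta B'$ sits inside the measurable null set $S_1\times S_2$.

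\emph{Countable additivity.} Take pairwise disjoint sets $f^{-1}(B_n)$ in $\Sigma$. Replace $B_n$ by $B_n':=B_n\setminus\bigcup_{m<n}B_m$. By disjointness of the preimages, $f^{-1}(B_n')=f^{-1}(B_n)$. The $B_n'$ are pairwise disjoint and $f^{-1}(\bigcup_n B_n')=\bigcup_n f^{-1}(B_n)$. Countable additivity of $\nu_1\times\nu_2$ together with well-definedness then gives countable additivity of $\nu$. Moreover $\nu(\emptyset)=0$ and $\nu(X)=\nu_1(X_1)\nu_2(X_2)=1$.

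\emph{Pushforward and uniqueness.} By construction $f_\#\nu(B)=\nu(f^{-1}(B))=(\nu_1\times\nu_2)(B)$, so $f_\#\nu=\nu_1\times\nu_2$. Taking $B=A_1\times A_2$ and using $f^{-1}(A_1\times A_2)=(p_1\circ f)^{-1}(A_1)\cap(p_2\circ f)^{-1}(A_2)$ gives \eqref{eqn_prod}.

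Conversely, let $\nu'$ be any measure on $(X,\Sigma)$ satisfying \eqref{eqn_prod}. Then $f_\#\nu'$ agrees with $\nu_1\times\nu_2$ on rectangles, so by the $\pi$--$\lambda$ uniqueness above $f_\#\nu'=\nu_1\times\nu_2$. Every set in $\Sigma$ has the form $f^{-1}(B)$, so $\nu'(f^{-1}(B))=f_\#\nu'(B)=(\nu_1\times\nu_2)(B)=\nu(f^{-1}(B))$, that is, $\nu'=\nu$.

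Injectivity of $f$ is not needed for any of these steps. It only makes $\Sigma$ separate points, consistent with the later use of $g^\#$, where $\nu=f^\#(\nu_1\times\nu_2)$ in suitable cases.
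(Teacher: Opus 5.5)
Your proof is correct, and it follows a more direct route than the paper's.

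\textbf{The paper's route.} The paper restricts to the full-measure set $\bar X=f^{-1}(X_1\times X_2\setminus S_1\times S_2)$. There, injectivity of $f$ together with (1) makes $f$ a measurable isomorphism $\bar f$ onto $X_1\times X_2\setminus S_1\times S_2$. It then sets $\nu=\bar\iota_\#\bar f^\#\iota^\#(\nu_1\times\nu_2)$ and uses (2) in the form $\iota_\#\iota^\#(\nu_1\times\nu_2)=\nu_1\times\nu_2$ to get $f_\#\nu=\nu_1\times\nu_2$. Uniqueness follows because any competitor gives no mass to $f^{-1}(S_1\times S_2)$, and is therefore determined on $\bar X$ through $\bar f$.

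\textbf{Your route.} You define $\nu$ directly on $\Sigma=f^{-1}(\Sigma_1\otimes\Sigma_2)$ by $\nu(f^{-1}(B))=(\nu_1\times\nu_2)(B)$, and the only real check is well-definedness. This has three advantages:
\begin{itemize}
\item it needs neither injectivity nor the auxiliary isomorphism;
\item it isolates the condition actually used, namely that every $\Sigma_1\otimes\Sigma_2$-set disjoint from ${\rm Im}f$ is $\nu_1\times\nu_2$-null;
\item your $\pi$--$\lambda$ step justifies identifying $f_\#\nu=\nu_1\times\nu_2$ with \eqref{eqn_prod}, which the paper leaves implicit.
\end{itemize}
The paper's route, in turn, exhibits $\nu$ directly as the pullback $f^\#(\nu_1\times\nu_2)$. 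That is the form invoked later in Lemma \ref{lem_tensor_proj} and Lemma \ref{lem_markov_op_cont}. You recover it whenever $f$ is injective and maps $\Sigma$ into $\Sigma_1\otimes\Sigma_2$: $f(f^{-1}(B))=B\cap{\rm Im}f$ differs from $B$ by a measurable set disjoint from ${\rm Im}f$, hence by a null set.

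\textbf{Reading of (1).} Your formulation also does not depend on how (1) is read. Consider the reading $(X_1\setminus S_1)\times(X_2\setminus S_2)\subset{\rm Im}f$. Under it, both halves of (2) are genuinely needed. It is also what Section \ref{subsect_def_markov} can actually supply, provided concatenations of paths in $\G$ stay in $\G$. The literal reading fails there: a pair with $\gamma_1(t)=x\neq\gamma_2(t)$ lies neither in ${\rm Im}f$ nor in $S_1\times S_2$. Under that reading, your argument goes through verbatim with $S_1\times S_2$ replaced by the null set $(S_1\times X_2)\cup(X_1\times S_2)$.
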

\begin{proof}
Let $\bar X:=f^{-1}(X_1\times X_2\backslash S_1\times S_2)\in \Sigma$. Let $\bar\iota:\bar X\hookrightarrow X$ and $\iota: X_1\times X_2\backslash S_1\times S_2\hookrightarrow X_1\times X_2$ be the canonical injections, and let $\bar\Sigma$ be the initial $\s$-algebra on $X_1\times X_2\backslash S_1\times S_2$  with respect to $\iota$. 
	Note that there exists a unique measurable isomorphism $\bar f:\bar X\to X_1\times X_2\backslash S_1\times S_2$ such that $$  \iota\circ \bar f =f\circ \bar \iota .$$
Note that $\iota(A)$ lies in $\Sigma_1\otimes \Sigma_2$ for each set $A$ in $\bar\Sigma$ since the projection $\iota(A)$ onto $X_1$  (resp. $X_2$) belongs to $\Sigma_1$ (resp. $\Sigma_2$). Also $\iota^\#\nu_1\times \nu_2=\nu_1\times\nu_2\lfloor_{X_1\times X_2\backslash S_1\times S_2}.$ 
	Therefore by $(ii)$, we have
	\begin{equation}\label{eqn_iota}
	\iota_\#\iota^\#(\nu_1\times \nu_2)=\nu_1\times \nu_2. 
	\end{equation} 
	Define the measure $ \nu=\bar\iota_\#\bar f^\#\iota^\#\nu_1\times \nu_2$ on $ X$. Since $\bar f$ is a measurable isomorphism and by \eqref{eqn_iota}, we have
	\begin{equation}\label{eqn_charact_mu}
	f_\#\nu=f_\#\bar\iota_\#\bar f^\#\iota^\#\nu_1\times \nu_2=\iota_\# \bar f_\#\bar f^\#\iota^\#\nu_1\times \nu_2 =\iota_\# \iota^\#\nu_1\times \nu_2=\nu_1\times \nu_2.  
	\end{equation}
	Let us prove that \eqref{eqn_charact_mu} uniquely characterises $\nu$. Let $\bar\nu$ satisfy $f_\#\bar\nu=\nu_1\times \nu_2$, and observe that, we have $\bar f^\#\iota^\#f_\#\bar\nu=\bar\iota^\#\bar\nu$, whenever $\bar\nu$ satisfies $(ii)$. 
	Therefore,
	\begin{equation}
\bar\nu=\bar\iota_\#\bar\iota^\#\bar\nu=\bar\iota_\#\bar f^\#\iota^\#f_\#	\bar\nu=\bar\iota_\#\bar f^\#\iota^\#\nu_1\times \nu_2,
	\end{equation}
	which shows that $\nu$ is uniquely characterised.

	\bigskip 
\end{proof}


\bigskip 
\subsection{Tensor product and composition of maps}
Let $\bar X_1$ and $\bar X_2$ be two sets. Let $g_1:X_1\to \bar X_1$ and $g_2:X_2\to \bar X_2$ be two surjective maps and let $\bar \Sigma_{1}$ and $\bar \Sigma_{2}$ be $\s$-algebras on $\bar X_1$ and $\bar X_2$, which respectively make $g_1$ and $g_2$ measurable. Under the hypothesis of Proposition \ref{prop_tensor}, assume that we have
\begin{equation} \label{eqn_assump}
 g^{-1}_1(g_1(S_1))=S_1 \qquad\text{ and }\qquad g^{-1}_2(g_2(S_2))=S_2. 
 \end{equation} 
Let $\bar X$ be a set and let $g:X\to \bar X$ be a surjective map. Let $\bar \Sigma$ be a $\s$-algebra on $\bar X$, which makes $g$ measurable. Assume further that there exists an injective and measurable map $\bar f :\bar X\to \bar X_1\times \bar X_2$ such that for every $A\in \bar\Sigma$, we have $\bar f(A)\in \bar\Sigma_1\otimes \bar \Sigma_2$, and further assume that
\begin{equation}\label{eqn_assump_2}
\bar f\circ g=(g_1\times g_2)\circ f.
\end{equation}
\begin{lemma}\label{lem_tensor_proj}
	Under the same hypothesis as Proposition \ref{prop_tensor}, the tensor product of $(g_1)_\#\nu_1$ and $(g_2)_\#\nu_2$ through $\bar f$ is well-defined, denoted by $(g_1)_\#\nu_1\otimes_{\bar f} (g_2)_\#\nu_2$, and characterised by 
	\begin{equation}
\bar f_\#[	(g_1)_\#\nu_1\otimes_{\bar f} (g_2)_\#\nu_2]=(g_1)_\#\nu_1\times (g_2)_\#\nu_2,
	\end{equation}
	and further satisfies
	\begin{equation}\label{eqn_satisfies}
	(g_1)_\#\nu_1\otimes_{\bar f} (g_2)_\#\nu_2=g_\#\nu_1\otimes_f\nu_2. 
	\end{equation}
\end{lemma}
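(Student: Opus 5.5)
We have to verify two things: that the hypotheses of Proposition \ref{prop_tensor} hold for the data $(\bar f,(g_1)_\#\nu_1,(g_2)_\#\nu_2)$, so that the tensor product through $\bar f$ exists and is characterised by $\bar f_\#[\cdot]=(g_1)_\#\nu_1\times(g_2)_\#\nu_2$; and that $g_\#(\nu_1\otimes_f\nu_2)$ has this same pushforward under $\bar f$, so that uniqueness in Proposition \ref{prop_tensor} gives \eqref{eqn_satisfies}.

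\emph{Step 1: hypotheses of Proposition \ref{prop_tensor} for $\bar f$.} We take $\bar S_1:=g_1(S_1)$ and $\bar S_2:=g_2(S_2)$.
\begin{itemize}
\item \emph{Measurability.} We need $\bar S_i\in\bar\Sigma_i$. This is a point to be careful about, since images of measurable sets need not be measurable. If it fails, we replace $\bar S_i$ by a measurable set $T_i\supset g_i(S_i)$ with $g_i^{-1}(T_i)$ still $\nu_i$-null. The simplest route is to read the standing assumption on $\bar\Sigma_i$ as guaranteeing that the saturated set $g_i(S_i)$ is measurable, as in the concrete situation of Section \ref{sec_markovianisation}.
\item \emph{Nullity.} By \eqref{eqn_assump}, $(g_i)_\#\nu_i(\bar S_i)=\nu_i(g_i^{-1}(g_i(S_i)))=\nu_i(S_i)=0$.
\item \emph{Covering.} Let $(\bar x_1,\bar x_2)\notin\bar S_1\times\bar S_2$, say $\bar x_1\notin\bar S_1$. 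By surjectivity choose $x_1,x_2$ with $g_i(x_i)=\bar x_i$. Then $x_1\notin S_1$, because $x_1\in S_1$ would give $\bar x_1\in g_1(S_1)$. Hence $(x_1,x_2)\notin S_1\times S_2$, so $(x_1,x_2)=f(x)$ for some $x$. By \eqref{eqn_assump_2}, $\bar f(g(x))=(g_1\times g_2)(f(x))=(\bar x_1,\bar x_2)$, so $(\bar x_1,\bar x_2)\in{\rm Im}\,\bar f$.
\end{itemize}
Proposition \ref{prop_tensor} now applies: $\bar f$ is injective, and the needed image-measurability $\bar f(A)\in\bar\Sigma_1\otimes\bar\Sigma_2$ is assumed. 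This gives existence, uniqueness and the characterisation of $(g_1)_\#\nu_1\otimes_{\bar f}(g_2)_\#\nu_2$.

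\emph{Step 2: identification \eqref{eqn_satisfies}.} Set $\nu=\nu_1\otimes_f\nu_2$. Using \eqref{eqn_assump_2} and the standard fact $(g_1\times g_2)_\#(\nu_1\times\nu_2)=(g_1)_\#\nu_1\times(g_2)_\#\nu_2$, we compute
\[
\bar f_\#(g_\#\nu)=(\bar f\circ g)_\#\nu=(g_1\times g_2)_\#f_\#\nu=(g_1\times g_2)_\#(\nu_1\times\nu_2)=(g_1)_\#\nu_1\times(g_2)_\#\nu_2 .
\]
The standard fact is checked on rectangles $\bar A_1\times\bar A_2$, whose preimage is $g_1^{-1}(\bar A_1)\times g_2^{-1}(\bar A_2)$, and then extended by the $\pi$-$\lambda$ theorem, using finiteness of the measures. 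Two points must be confirmed here: $g_\#\nu$ is a measure on $(\bar X,\bar\Sigma)$ because $g$ is measurable, and $\bar\Sigma$ should be the initial $\sigma$-algebra of $\bar f$ (or at least contained in the domain on which uniqueness is asserted). Granting these, the uniqueness part of Proposition \ref{prop_tensor} yields \eqref{eqn_satisfies}.

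The main obstacle is the measurability bookkeeping. Specifically, we must ensure $g_i(S_i)\in\bar\Sigma_i$ and that $\bar\Sigma$ coincides with the $\sigma$-algebra generated by $\bar f$, so that uniqueness applies on the right $\sigma$-algebra. The covering argument and the pushforward computation are routine once \eqref{eqn_assump} and \eqref{eqn_assump_2} are used as above.
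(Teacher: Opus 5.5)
Your proof is correct and follows the paper's route. It uses the same choice $\bar S_i=g_i(S_i)$, nullity via \eqref{eqn_assump}, covering via surjectivity and \eqref{eqn_assump_2}, and identification by showing $\bar f_\#(g_\#\nu_1\otimes_f\nu_2)=(g_1)_\#\nu_1\times(g_2)_\#\nu_2$ and invoking uniqueness; the paper phrases this last step through the pullbacks $\bar f^\#$ and $f^\#$, which amounts to the same argument.

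On your two measurability worries: the paper simply asserts $g_i(S_i)\in\bar\Sigma_i$ from \eqref{eqn_assump}. That is valid when $\bar\Sigma_i$ is the final $\sigma$-algebra of $g_i$, so your reading matches it, and your fallback set $T_i$ is unnecessary (and need not exist in general). The fact that $\bar\Sigma$ is the initial $\sigma$-algebra of $\bar f$ is automatic, since injectivity and $\bar f(A)\in\bar\Sigma_1\otimes\bar\Sigma_2$ give $A=\bar f^{-1}(\bar f(A))$ for every $A\in\bar\Sigma$.
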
 
\begin{proof}
	Set $\bar S_1:=g_1(S_1)$ and $\bar S_2:=g_2(S_2)$ and observe that $\bar S_1\in \bar\Sigma_1$ and $\bar S_2\in \bar\Sigma_2$ by \eqref{eqn_assump}, and that $$(g_1)_\#\nu_1(\bar S_1)=0=(g_2)_\#\nu_2(\bar S_2).$$
Since $g$ is surjective, we have that ${\rm Im}(\bar f \circ g)={\rm Im }\bar f$. 
	Therefore $\bar X_1\times \bar X_2\backslash \bar S_1\times \bar S_2\subset {\rm Im} \;\bar f$ since $g_1\times g_2$ is surjective, since $X_1\times X_2\backslash S_1\times S_2\subset {\rm Im } f$, and in view of \eqref{eqn_assump} and \eqref{eqn_assump_2}. Therefore the tensor product of $(g_1)_\#\nu_1$ and $(g_2)_\#\nu_2$ through $\bar f$ is well-defined by Proposition \ref{prop_tensor}, and is characterised by
	\begin{equation}
	\bar f_\#[ (g_1)_\#\nu_1\otimes_{\bar  f} (g_2)_\#\nu_2]=(g_1)_\#\nu_1\times (g_2)_\#\nu_2=(g_1\times g_2)_\#\nu_1\times \nu_2. 
	\end{equation}
	Also, since $\bar f$ is injective, we have that 
	\begin{equation}
	(g_1)_\#\nu_1\otimes_{\bar  f} (g_2)_\#\nu_2=\bar f^\# (g_1\times g_2)_\#\nu_1\times \nu_2. 
	\end{equation}
	Since $f$ is injective, by the characterising property of tensor product of Proposition \ref{prop_tensor}, we have that 
	\begin{equation}
g_\#	\nu_1\otimes_f \nu_2=g_\#f^\#\nu_1\times \nu_2. 
	\end{equation}
	Notice now that 
	\begin{equation}\label{eqn_Above}
	g_\#f^\#\nu_1\times \nu_2=\bar f^\# (g_1\times g_2)_\#\nu_1\times \nu_2,
	\end{equation}
holds if and only 
	\begin{equation}\label{eqn_equivalence}
(g_1\times g_2)_\#f_\#f^\#\nu_1\times \nu_2= (g_1\times g_2)_\#\nu_1\times \nu_2,
\end{equation}
where we have used \eqref{eqn_assump_2} and $\bar X_1\times \bar X_2\backslash \bar S_1\times \bar S_2\subset {\rm Im}\bar f$. Since $X_1\times X_2\backslash S_1\times S_2\subset {\rm Im} f$, it holds that $f_\#f^\#\nu_1\times\nu_2=\nu_1\times\nu_2$, whence \eqref{eqn_equivalence} holds and thus in view of \eqref{eqn_Above}, we have that 
\begin{equation}
(g_1)_\#\nu_1\otimes_{\bar f}(g_2)_\#\nu_2=g_\#\nu_1\otimes_f \nu_2. 
\end{equation}
This proves the lemma. 
\end{proof}

\bigskip 
\section{Markovianisation} \label{sec_markovianisation}
Let $Y$ be a locally compact Polish space. Let $\mu$ be a Radon measure on $Y$. Let $\Gamma$ be as in Section \ref{subsec_space_G}. 
In this section, we present Markovianisation of measures in $\mathcal{M}_\mu(\G)$ at a single time, and at a finite number of times.
\subsection{The Markov operator at a single time}\label{subsect_def_markov}
Let $I\subset \R$ be an interval, let  $\iota^I$ be the canonical injection $I \hookrightarrow \R$. Define the map $r^I$ from $C(\R;Y)$ to  $C(I;Y)$ by $r^I(\g)=\g\circ \iota^I$ for every $\g\in \G$. Let $\G\lfloor_I$ be the image of $\G$ under $r^I$, and by a slight abuse of notation, let $ r^I$ be the surjective restriction of $r^I$ to $\G$ into $\G\lfloor_I$. $\G\lfloor_I$ is then endowed with the final uniformity with respect to $r^I$. The pushforward of probability measures through $r^I$  induces a surjective map $(r^I)_\#$ from $\mathcal{P}(\G)$ to $\mathcal{P}(\G\lfloor_I)$. Given $\eeta\in \mathcal{P}(\G)$, we will write $\eeta^I:=(r^I)_\#\eeta$ for the pushforward of $\eta$ through through the map $r^I$.

Let $I_1,I_2$ be two closed intervals in $\R$ such that $I= I_1\cup I_2$ and $I_1\cap I_2=\{t\}$ for some $t\in \R$, and $t_1\leq t_2$ for every $t_1\in I_1$ and every $t_2\in I_2$. Let $\eeta_1\in \mathcal{P}(\G\lfloor_{I_1})$ and let $\eeta_2\in \mathcal{P}(\G\lfloor_{I_2})$. Assume that there exists $x\in Y$ such that $$\eeta_1(\{\g\in \G\lfloor_{I_1}\;:\;\g(t)\neq x\})=0=\eeta_2(\{\g\in\G\lfloor_{I_2}\;:\;\g(t)\neq x\}).$$
The hypothesis of Proposition \ref{prop_tensor} are then satisfied with $(X_1,\Sigma_1)= (\mathcal{P}(\G\lfloor_{I_1}),\mathscr{B}(\G\lfloor_{I_1}))$,   $(X_2,\Sigma_2)= (\mathcal{P}(\G\lfloor_{I_2}),\mathscr{B}(\G\lfloor_{I_2}))$, $f=r^{I_1}\times r^{I_2}$, $S_1=\{\g\in\G\lfloor_{I_1}\;:\;\g(t)\neq x\}$, and $S_2=\{\g\in \G\lfloor_{I_2}\;:\;\g(t)\neq x\}$. Therefore, as a probability measure in $\mathcal{P}(\G\lfloor_I)$, the tensor product of $\eeta_1$ and $\eeta_2$ through $r^{I_1}\times r^{I_2}$ is well-defined and we shall denote it by $\eeta_1\otimes_{(t,x)}\eeta_2$.


 Let $\eeta\in \mathcal{M}_\mu(\G)$ and for every $t\in \R$, let $\{\eta_{t,x}\}_{x\in Y}$ be a disintegration of $\eeta$ with respect to $e_t$ and $\mu$. Let $t\in \R$, and define the Markovianisation map $M_t$ at time $t$ from $\mathcal{M}_\mu(\G)$ to $\mathcal{M}_\mu(\G)$ as follows:
 \begin{equation}
 M_t(\eeta)=\int_{Y} \eta^{(-\infty,t]}_{t,x} \otimes_{(t,x)} \eta^{[t,+\infty)}_{t,x}\mu(dx). 
 \end{equation}
 Observe that the definition of $M_t$ depends on the measure $\mu\in\mathcal{M}(Y)$. 
\begin{lemma}
	The map $M_t:\mathcal{M}_\mu(\G)\longrightarrow\mathcal{M}_\mu(\G)$ is well-defined. 
\end{lemma}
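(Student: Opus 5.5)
We need to show three things: that $M_t(\eeta)$ is a well-defined Radon measure on $\G$, that it does not depend on the choice of disintegration, and that it belongs to $\mathcal{M}_\mu(\G)$.

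\textbf{Step 1: the integrand is a probability measure on $\G$.} Fix $t$ and a disintegration $\{\eta_{t,x}\}_{x\in Y}$. For every $x$, $\eta_{t,x}$ is concentrated on $e_t^{-1}(x)$. Hence both pushforwards $\eta^{(-\infty,t]}_{t,x}$ and $\eta^{[t,+\infty)}_{t,x}$ give zero mass to $\{\g:\g(t)\neq x\}$. The construction preceding the statement, based on Proposition \ref{prop_tensor} with $f=r^{I_1}\times r^{I_2}$, then produces $\eta^{(-\infty,t]}_{t,x}\otimes_{(t,x)}\eta^{[t,+\infty)}_{t,x}\in\mathcal{P}(\G\lfloor_{\R})$.

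The point to check is that this measure lives on $\G$ itself, i.e.\ on concatenations of pieces of paths of $\G$. Proposition \ref{prop_tensor} only places the product on the preimage of the image of $f$. So we must verify that the concatenation of $\g_1\lfloor_{(-\infty,t]}$ and $\g_2\lfloor_{[t,\infty)}$ with $\g_1(t)=\g_2(t)=x$ lies in $\G$, or at least that the exceptional set is null. I would argue that $\nu_1\times\nu_2$ is concentrated on the pairs agreeing at $t$. Such a pair lies in the image of $f$ precisely when the concatenation belongs to $\G$, which is the implicit standing assumption behind hypothesis (i) of Proposition \ref{prop_tensor} as the paper applies it. Accepting it, the tensor product is a Borel probability measure on $\G$. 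It is Radon because $\G$ is Polish, so every finite Borel measure is Radon.

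\textbf{Step 2: measurability and independence of the disintegration.} For a Borel set $A\subset\G$, the map $x\mapsto \big(\eta^{(-\infty,t]}_{t,x}\otimes_{(t,x)}\eta^{[t,+\infty)}_{t,x}\big)(A)$ must be Borel. For cylinder sets $(r^{(-\infty,t]})^{-1}(A_1)\cap(r^{[t,\infty)})^{-1}(A_2)$, equation \eqref{eqn_prod} gives the value $\eta_{t,x}((r^{(-\infty,t]})^{-1}A_1)\,\eta_{t,x}((r^{[t,\infty)})^{-1}A_2)$. This is a product of Borel functions of $x$, since the disintegration is a Borel family. These sets form a $\pi$-system generating the Borel $\sigma$-algebra of $\G$ (the product structure pulled back by the injection). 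A monotone class argument then extends Borel measurability to all $A$.

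We therefore define $M_t(\eeta)(A)=\int_Y(\cdots)(A)\,\mu(dx)$. Countable additivity follows from monotone convergence. If $\{\tilde\eta_{t,x}\}$ is another disintegration, it agrees with $\{\eta_{t,x}\}$ for $\mu$-a.e.\ $x$ by essential uniqueness, so the integral is unchanged.

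\textbf{Step 3: the marginals equal $\mu$.} Take $s\ge t$ and a Borel set $B\subset Y$. The set $e_s^{-1}(B)$ is the cylinder with $A_1=\G\lfloor_{(-\infty,t]}$ and $A_2=\{\g:\g(s)\in B\}$. By \eqref{eqn_prod} its mass under the integrand is $\eta_{t,x}(e_s^{-1}B)$. Integrating in $\mu$ gives $\eeta(e_s^{-1}B)=\mu(B)$, and the case $s\le t$ is symmetric. In particular $M_t(\eeta)(e_0^{-1}K)=\mu(K)<\infty$ for compact $K$. Since compact sets of $\G$ are contained in sets $e_0^{-1}(K)$ by continuity of $e_0$, $M_t(\eeta)$ is finite on compacts and hence Radon, so $M_t(\eeta)\in\mathcal{M}_\mu(\G)$.

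The main obstacle is Step 1, namely ensuring that the glued measure is supported in $\G$. I would handle it by invoking the image-of-$f$ hypothesis as the paper does, noting that $\G$ is closed under concatenation at a common point up to null sets.
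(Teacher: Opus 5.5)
Your proof is correct and follows the paper's route. The cylinder computation in Step 3 is exactly the paper's observation that $e_s\circ f=e_s$: each fibre $\eta^{(-\infty,t]}_{t,x}\otimes_{(t,x)}\eta^{[t,+\infty)}_{t,x}$ has the same one-time marginals as $\eta_{t,x}$, so integrating against $\mu$ gives $(e_s)_\#M_t(\eeta)=\mu$. Independence from the choice of disintegration then comes from essential uniqueness, and your measurability and Radon checks in Steps 1--2 are details the paper leaves implicit.

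One caveat. Closure of $\G$ under concatenation at $(t,x)$ does not follow from (Comp-$\G$) and (t-trans-$\G$), not even up to null sets. Take $\G=\{s\mapsto x+vs:\ x\in\R^d,\ |v|\le 1\}$ and $\eeta$ the Lebesgue-invariant mixture of two distinct constant velocities: then $M_t(\eeta)$ charges glued paths that are not in $\G$. So this closure should be stated as an extra hypothesis rather than ``noted''. The paper makes the same tacit assumption when it declares the hypotheses of Proposition \ref{prop_tensor} satisfied in Section \ref{subsect_def_markov}.
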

\begin{proof}
	Let us show that $(e_s)_\#M_t(\eeta)=\mu$ for every $s\in \R$. For $t=s$, this follows from the fact that $\eta^{(-\infty,t]}_{t,x} \otimes_{(t,x)} \eta^{[t,+\infty)}_{t,x}$ is a probability measure concentrated on $\{\g\in\G\;:\;\g(t)=x\}$. Let $s\neq t$ and by a slight abuse of notation let $e_s$ denote both the evaluation map at time $s$ from $\G$ to $Y$ and the evaluation map at time $s$ from $\G\lfloor_{(-\infty,t]}\times \G\lfloor_{[t,+\infty)}$ to $Y$. 
	Observe that $e_s\circ f=e_s$.
	For each $x\in Y$, we thus have by the characterising property of Proposition \ref{prop_tensor}
	\begin{equation}
	(e_s)_\# [\eta^{(-\infty,t]}_{t,x}\otimes_{(t,x)}\eta^{[t.+\infty)}_{t,x}]=	(e_s\circ f)_\# [\eta^{(-\infty,t]}_{t,x}\otimes_{(t,x)}\eta^{[t.+\infty)}_{t,x}]=(e_s)_\#\eta_{t,x},
	\end{equation}
	whereupon integrating against $\mu(dx)$, we find $(e_s)_\#M_t(\eeta)=(e_s)_\#\eeta=\mu$. This shows that $M_t(\eeta)\in \mathcal{M}_\mu(\G).$
	The fact that $M_t(\eeta)$ does not depend on the choice of the disintegration of $\eeta$ with respect to $e_t$ and $\mu$ follows directly from essential uniqueness of the disintegration. The thesis is proved.
	\bigskip 
	
\end{proof}
\subsection{Bilinearity of the tensor product}
Let $I_1$ be an interval in $\R$ of the form $[t_0,t_1]$ or $(-\infty,t_1]$ for some real numbers $t_0<t_1$, and let $I_2$ be an interval in $\R$ of the form $[t_1,t_2]$ or $[t_1,+\infty)$ for some real number $t_2$. Let also $I:=I_1\cup I_2$, and let $\eeta_1\in \mathcal{P}(\G\lfloor_{I_1})$, and $\eeta_2\in \mathcal{P}(\G\lfloor_{I_2})$. 
Let $\aalpha: Y\to \mathcal{P}(\G\lfloor_{I_1})$ be a Borel map such that:
\begin{itemize} 
 \item[(supp-disj)] there exists $s\in I_1$ such that for every $y\in Y$, the probability measure $\aalpha(y)$ is supported on $\{\g\in \G\lfloor_{I_1}\;:\;\g(s)=y\}$;
  \item[(supp-$t_1$)] there exists $x\in Y$ such that for every $y\in Y$, the probability measure $\aalpha(y)$ is supported in $\{\g\in \G\lfloor_{I_1}\;:\;\g(t_1)=x\}$.
 \end{itemize} 
\begin{lemma}[Bilinearity]\label{lem_bilinearity}
The following identity holds  
\begin{equation}
\int_{Y}[\aalpha(y)\otimes_{(t_1,x)}\eeta_2]\mu(dy)= \Big[\int_{Y}\aalpha(y)\mu(dy)\Big]\otimes_{(t_1,x)}\eeta_2. 
\end{equation}
\end{lemma}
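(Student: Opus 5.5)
The plan is to reduce both sides to the characterising property of the tensor product in Proposition \ref{prop_tensor}. That property says $\nu_1\otimes_f\nu_2$ is the unique measure $\nu$ on $(X,\Sigma)$ with $f_\#\nu=\nu_1\times\nu_2$. Here $X=\G\lfloor_I$ and $f=r^{I_1}\times r^{I_2}$, which is injective on $\G\lfloor_I$ since $I=I_1\cup I_2$. Uniqueness holds for any measure, not only probabilities: the argument only uses injectivity of $f$ and that $\nu_1\times\nu_2$ is concentrated on ${\rm Im} f$.

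\textbf{Step 1: the right-hand side is defined.} Set $\bar\aalpha:=\int_Y\aalpha(y)\mu(dy)$. By (supp-$t_1$) each $\aalpha(y)$ gives zero mass to the Borel set $S_1=\{\g\in\G\lfloor_{I_1}:\g(t_1)\neq x\}$, so $\bar\aalpha(S_1)=0$. Likewise $\eeta_2(S_2)=0$, which is implicit in the hypothesis that $\aalpha(y)\otimes_{(t_1,x)}\eeta_2$ is defined. Hence $\bar\aalpha\otimes_{(t_1,x)}\eeta_2$ is defined by Proposition \ref{prop_tensor}. It need not be a probability measure, since $\bar\aalpha$ has mass $\mu(Y)$; I would note that the construction in that proof does not use normalisation.

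Borel measurability of $y\mapsto\aalpha(y)$ makes $y\mapsto\aalpha(y)(A)$ measurable for Borel $A$. By a monotone class argument on rectangles, the same holds for $y\mapsto[\aalpha(y)\otimes_{(t_1,x)}\eeta_2](B)$ for $B\in\mathscr{B}(\G\lfloor_I)$. The left-hand side is therefore a well-defined measure. Measurability in $y$ of $[\aalpha(y)\times\eeta_2](f(B))$ follows from measurability of $y\mapsto\aalpha(y)\times\eeta_2(C)$ for $C$ in the product $\s$-algebra. This holds on rectangles and extends by Dynkin's $\pi$-$\lambda$ theorem.

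\textbf{Step 2: identify the pushforwards.} Let $\nu$ denote the left-hand side. For rectangles $A_1\times A_2$, Proposition \ref{prop_tensor} gives
\[
f_\#\nu(A_1\times A_2)=\int_Y \aalpha(y)(A_1)\eeta_2(A_2)\mu(dy)=\bar\aalpha(A_1)\eeta_2(A_2)=(\bar\aalpha\times\eeta_2)(A_1\times A_2).
\]
Rectangles form a $\pi$-system generating the product $\s$-algebra. To conclude $f_\#\nu=\bar\aalpha\times\eeta_2$, I would use $\s$-finiteness: both measures have total mass $\mu(Y)\eeta_2$-mass, so if $\mu$ is not finite I would restrict to $A_1=\{\g:\g(s)\in K_n\}$ with $K_n$ an exhaustion of $Y$ by compacts. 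Here (supp-disj) is used. It gives $\aalpha(y)(\{\g(s)\in K_n\})=\mathbb{1}_{K_n}(y)$, so these sets have finite mass $\mu(K_n)$ under $\bar\aalpha$.

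\textbf{Step 3: conclude.} The uniqueness part of Proposition \ref{prop_tensor} then yields $\nu=\bar\aalpha\otimes_{(t_1,x)}\eeta_2$.

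I expect the main obstacle to be the measure-theoretic bookkeeping. This means the measurability in $y$ and the $\s$-finiteness needed for uniqueness of measures agreeing on rectangles, and (supp-disj) is exactly what supplies the latter. The algebraic identity itself is immediate.
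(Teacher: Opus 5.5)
Your proof is correct, but it reaches the conclusion by a different route from the paper.

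\textbf{The paper's route.} Its step 2 shows that the identity $f_\#\int_Y\tilde\aalpha(y)\mu(dy)=\int_Y[\aalpha(y)\times\eeta_2]\mu(dy)$ determines $\tilde\aalpha(y)=\aalpha(y)\otimes_{(t_1,x)}\eeta_2$ for $\mu$-a.e.\ $y$. This holds among Borel families with $\tilde\aalpha(y)$ concentrated on $\{\g(s)=y\}$. That is where the paper spends (supp-disj): it tests on $(r^{I_1})^{-1}(\{\g(s)\in A\})\cap B$, identifies densities a.e., and then takes a countable generating family. Its step 3 computes $f_\#$ of the right-hand side and appeals to step 2.

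\textbf{Your route.} You bypass that fibrewise identification. Both sides have the same pushforward under the injective map $f$, so the uniqueness part of Proposition \ref{prop_tensor} gives equality at once; as you note, that uniqueness does not use normalisation. You need (supp-disj) only to make $\int_Y\aalpha(y)\mu(dy)$ $\s$-finite, so that agreement on rectangles extends to the product $\s$-algebra.

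\textbf{Comparison.} Your argument is shorter, and it handles two points the paper glosses over. First, $\int_Y\aalpha(y)\mu(dy)$ has mass $\mu(Y)$, so in general it is not in $\mathcal{P}(\G\lfloor_{I_1})$. Second, to pass from the paper's step 2 to the thesis, one must first write the right-hand side as $\int_Y\tilde\aalpha(y)\mu(dy)$ with $\tilde\aalpha(y)$ on the fibres of $e_s$. That means disintegrating it with respect to $e_s$ and $\mu$, which is possible since its $e_s$-marginal is $\mu$, but the paper leaves this step implicit. What the paper's detour buys is the stronger, fibrewise statement: a.e.\ identification of the integrand itself. That is the form useful when identifying disintegrations, as in the commutativity lemma. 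For the identity as stated, your argument suffices.
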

\begin{remark}
	Swapping the role of $I_1$ and $I_2$, the following identity holds
	\begin{equation}
	\int_{Y}[\eeta_1\otimes_{(t_1,x)}\aalpha(y)]\mu(dy)= \eeta_1\otimes_{(t_1,x)}\Big[\int_{Y}\aalpha(y)\mu(dy)\Big]. 
	\end{equation}
\end{remark}
\begin{proof}
	1. Let us check that both sides of the identity are well-defined. The integral $\int_Y\aalpha(y)\mu(dy)$ is well-defined in $\mathcal{P}(\G\lfloor_{I_1})$, and supported on $\{\g\in \G\lfloor_{I_1}\;:\;\g(t_1)=x\}$ by (supp-$t_1$), whence the tensor product $\Big[\int_{Y}\aalpha(y)\mu(dy)\Big]\otimes_{(t_1,x)}\eeta_2$ is well-defined since the hypothesis of Section \ref{subsect_def_markov} are satisfied. Also the integral $\int_Y[\aalpha(y)\otimes_{(t_1,x)} \eeta_2]\mu(dy)$  is well-defined since for $\mu$-a.e. $y\in Y$, the tensor product $\aalpha(y)\otimes_{(t_1,x)}\eeta_2$ is well-defined because the hypothesis of Section \ref{subsect_def_markov} are satisfied.
	
	\bigskip 
	2. Let us show that 
	\begin{equation}
	\begin{split} 
	f_\#\int_Y [\aalpha(y)\otimes_{(t_1,x)} \eeta_2]\mu(dy)=\int_Y [\aalpha(y)\times \eeta_2]\mu (dy)
		\end{split} 
	\end{equation}
	uniquely characterises $\aalpha(y)\otimes_{(t_1,x)}\eeta_2$ for $\mu$-a.e. $y\in Y$ in the class of Borel maps $\tilde\aalpha$ from $Y$ to $\mathcal{P}(\G\lfloor_I)$ such that $\tilde\aalpha(y)$ is supported on $\{\g\in \G\lfloor_{I}\;:\;\g(s)=y\}$ for every $y\in Y$. 
	Let $\tilde \aalpha :Y\to \mathcal{P}(\G\lfloor_I)$ be a measurable map such that 
\begin{equation}
f_\#\int_Y\tilde \aalpha(y)\mu(dy)=\int_Y[\aalpha(y)\times \eeta_2]\mu(dy) \;\text{ and }\;\supp\tilde\aalpha (y)\subset \{\g\in \G\lfloor_I\;:\;\g(s)=y\}\quad\forall y\in Y.  
\end{equation} 
	Let $\mathscr{G}$ be a countable family generating the $\s$-algebra $\mathscr{B}(\G\lfloor_I)$ and let $B\in \mathscr{G}$.	Let $A\subset Y$ be a Borel set, and let us evaluate the above equation on the Borel set ($r^{I_1})^{-1}(\{\g\in \G\lfloor_{I_1}\;:\;\g(s)\in A\})\cap B$, which by (supp-disj) gives the following
	\begin{equation}
	f_\#\int_{A}\tilde \aalpha(y)(B)\mu(dy)=\int_{A} [\aalpha(y)\times \eeta_2](B)\mu(dy). 
	\end{equation}
	As $A$ was arbitrary, by Lusin's theorem, there exists a $\mu$-negligeable set $N_B\subset Y$ such that $f_\#\tilde\aalpha(y)(B)=[\aalpha (y)\times \eeta_2](B)$ for every $y\in Y\backslash N_B$. We then set $N:=\bigcup_{B\in \mathscr{G}} N_B$, which is also $\mu$-negligeable, and since $\mathscr{G}$ generates the Borel $\s$-algebra $\mathscr{B}(\G\lfloor_I)$, we have that for $\mu$-a.e. $y\in Y$, it holds $f_\#\tilde \aalpha(y)=\aalpha(y)\times \eeta_2$. Therefore, by the characterisation of Proposition \ref{prop_tensor}, we have $\tilde\aalpha (y)=\aalpha (y)\otimes_{(t_1,x)} \eeta_2$ for $\mu$-a.e. $y\in Y$.  
	\bigskip 
	
	3. Let us conclude the proof. 
	By the characterisation of the tensor product of Proposition \ref{prop_tensor}, we have 
	\begin{equation}
	f_\#\Big(\Big[ \int_Y\aalpha(y)\mu(dy)\Big]\otimes_{(t_1,x)} \eeta_2\Big)=\Big[\int_Y \aalpha(y)\mu(dy)\Big]\times \eeta_2=\int_Y [\aalpha(y)\times \eeta_2]\mu(dy),
	\end{equation}
	In view of 2. of this proof the thesis follows. 
\end{proof}

\subsection{Associativity of the tensor product}
Let us now argue that the tensor product is associative. 
\begin{lemma}[Associativity]\label{lem_associativity}
	Let $t_1<t_2$ be real numbers and let $x_1,x_2\in Y$. Let also $I_1,I_2,I_3\subset \R$ be closed intervals such that: $I_1\cap I_2=\{t_1\}$, and $I_2\cap I_3=\{t_2\}$; for every $t_1\in _1$ and every $t_2\in I_2$, we have $t_1\leq t_2$; for every $t_2\in I_2$ and $t_3\in I_3$, we have $t_2\leq t_3$. Let $\eeta_i\in \mathcal{M}(\G\lfloor_{I_i})$ for $i=1,2,3$ such that $\eeta_1$ is concentrated on $\{\g\in \G\lfloor_{I_1}\;:\;\g(t_1)=x_1\}$, $\eeta_3$ is concentrated on $\{\g\in \G\lfloor_{I_3}\;:\;\g(t_2)=x_2\}$ and $\eeta_2$ is concentrated on $\{\g\in \G\lfloor_{I_2}\;:\;\g(t_1)=x_1\text{ and }\g(t_2)=x_2\}$. Then we have the identity
	\begin{equation}
	\eeta_1\otimes_{(t_1,x_1)}[\eeta_2\otimes_{(t_2,x_2)}\eeta_3]= 	[\eeta_1\otimes_{(t_1,x_1)}\eeta_2]\otimes_{(t_2,x_2)}\eeta_3.
	\end{equation}
\end{lemma}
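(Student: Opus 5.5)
Both sides are probability measures on $\G\lfloor_{I}$ with $I:=I_1\cup I_2\cup I_3$, so the plan is to show that they have the same image under the injective map
\begin{equation*}
F:=r^{I_1}\times r^{I_2}\times r^{I_3}:\G\lfloor_I\to \G\lfloor_{I_1}\times\G\lfloor_{I_2}\times\G\lfloor_{I_3},
\end{equation*}
namely $\eeta_1\times\eeta_2\times\eeta_3$. Injectivity of $F$ on the relevant sets then gives equality by the uniqueness part of Proposition \ref{prop_tensor}. Concretely, I will realise each side as a tensor product through a map whose image misses only a null set, and compare characterising properties. (If the $\eeta_i$ are not probability measures, one normalises first; both sides are multilinear in the total masses.)

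First I check that both sides are well-defined. $\eeta_2\otimes_{(t_2,x_2)}\eeta_3$ is well-defined by Section \ref{subsect_def_markov}, since $\eeta_2$ and $\eeta_3$ both concentrate on paths with $\g(t_2)=x_2$. Next, $\eeta_2\otimes_{(t_2,x_2)}\eeta_3$ concentrates on $\{\g(t_1)=x_1\}$. To see this, apply Proposition \ref{prop_tensor} to the set $(r^{I_2}\circ f)^{-1}(\{\g(t_1)\neq x_1\})$, which has measure $\eeta_2(\{\g(t_1)\neq x_1\})\eeta_3(\G\lfloor_{I_3})=0$. So the outer tensor with $\eeta_1$ is well-defined. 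Symmetrically, $\eeta_1\otimes_{(t_1,x_1)}\eeta_2$ concentrates on $\{\g(t_2)=x_2\}$, so the right-hand side is well-defined.

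Second, I compute $F_\#$ of the left-hand side. Write $\nu:=\eeta_2\otimes_{(t_2,x_2)}\eeta_3$ on $\G\lfloor_{I_2\cup I_3}$ and $\lambda:=\eeta_1\otimes_{(t_1,x_1)}\nu$. By Proposition \ref{prop_tensor}, $(r^{I_1}\times r^{I_2\cup I_3})_\#\lambda=\eeta_1\times\nu$. Now $F=(\mathrm{id}\times(r^{I_2}\times r^{I_3}))\circ(r^{I_1}\times r^{I_2\cup I_3})$, where the inner restrictions on the second factor are taken from $\G\lfloor_{I_2\cup I_3}$. Hence
\begin{equation*}
F_\#\lambda=\eeta_1\times (r^{I_2}\times r^{I_3})_\#\nu=\eeta_1\times\eeta_2\times\eeta_3 .
\end{equation*}
The same computation for the right-hand side also gives $\eeta_1\times\eeta_2\times\eeta_3$. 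This is a clean instance of the composition rule of Lemma \ref{lem_tensor_proj}, though here a direct pushforward functoriality suffices.

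Third, I show that $F$ determines a measure on $\G\lfloor_I$ concentrated on $\{\g(t_1)=x_1,\ \g(t_2)=x_2\}$. Here $F$ is injective because $I_1\cup I_2\cup I_3=I$. Its image contains every triple $(\g_1,\g_2,\g_3)$ of paths that agree at $t_1$ and $t_2$ with the prescribed values, provided such concatenations lie in $\G\lfloor_I$. This is the same gluing fact already used implicitly in Section \ref{subsect_def_markov}: the concatenation of restrictions matching at a junction lies in $\G\lfloor_{I}$. I apply it twice, once at $t_1$ and once at $t_2$.

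Both sides are then concentrated on $\bar X:=F^{-1}(\mathrm{Im}F\cap\{\text{matching}\})$, and $F$ restricted to this set is a measurable isomorphism onto its image. Arguing as in the uniqueness part of the proof of Proposition \ref{prop_tensor}, namely $\bar\nu=\bar\iota_\#\bar F^\#\iota^\#F_\#\bar\nu$, the two measures coincide. I expect the main obstacle to be this step. One must justify that $\mathrm{Im}F$ covers the complement of a null set, which is the gluing property of $\G$ at two times, and that images of Borel sets under the restricted $F$ remain Borel so that pullbacks make sense. For the latter I would invoke Lusin–Souslin (an injective Borel map between Polish spaces has Borel images), or reduce to the two-fold case by applying Proposition \ref{prop_tensor} iteratively.
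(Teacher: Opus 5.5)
Your proposal is correct and follows the paper's proof essentially step for step. You check well-definedness through the concentration of the inner tensor products at $t_1$ and $t_2$. You then use the factorisations of $r^{I_1}\times r^{I_2}\times r^{I_3}$ through $r^{I_1}\times r^{I_2\cup I_3}$ and through $r^{I_1\cup I_2}\times r^{I_3}$ to see that both sides push forward to $\eeta_1\times\eeta_2\times\eeta_3$. Finally you conclude by the uniqueness part of Proposition \ref{prop_tensor}, exactly as the paper does.

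Your third step is heavier than it needs to be. As in Proposition \ref{prop_tensor}, the $\s$-algebra on $\G\lfloor_I$ is the one generated by $F$, so every measurable set has the form $F^{-1}(B)$, and equality of the pushforwards under $F$ already gives equality of the two measures. Neither the two-time gluing property nor Lusin--Souslin is needed for that. Gluing only matters for existence, and you already have existence from the two-fold tensor products.
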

In view of the above lemma, we will write $\eeta_1\otimes_{(t_1,x_1)}\eeta_2\otimes_{(t_2,x_2)}\eeta_3$. 
\begin{proof}
	1. Denote $I=I_1\cup I_2\cup I_3.$ By the characterising property of the tensor product in Proposition \ref{prop_tensor}, $\eeta_2\otimes_{(t_2,x_2)}\eeta_3$ is concentrated on $\{\g\in \G\lfloor_{I_2\cup I_3}\;:\;\g(t_1)=x_1\}$ and $\eeta_1\otimes_{(t_1,x_1)}\eeta_2$ is concentrated on $\{\g\in \G\lfloor_{I_1\cup I_2}\;:\;\g(t_2)=x_2\}$. So both sides of the identity are well-defined by Proposition \ref{prop_tensor}. 
	
	\bigskip 
	
	2. Observe that $ r^{ I_2}\times r^{I_3}\circ r^{I_1}\times r^{I_2\cup I_3}=r^{I_1}\times r^{I_2}\circ r^{I_1\cup I_2}\times  r ^{I_3}=r^{I_1}\times r^{I_2}\times r^{I_3}.$ By Proposition \ref{prop_tensor} each side of the identity we seek to prove are uniquely characterised by
	\begin{equation}
	\begin{split} 
(	r^{I_1}\times r^{I_2}\times r^{I_3})_\#	\eeta_1\otimes_{(t_1,x_1)}[\eeta_2\otimes_{(t_2,x_2)}\eeta_3]&=\eeta_1\times \eeta_2\times \eeta_3,\\
(	r^{I_1}\times r^{I_2}\times r^{I_3})_\#	[\eeta_1\otimes_{(t_1,x_1)}\eeta_2]\otimes_{(t_2,x_2)}\eeta_3&=\eeta_1\times \eeta_2\times \eeta_3,
\end{split} 
	\end{equation}
	whence they must be equal, and this proves the thesis. 
\end{proof}
\subsection{The Markov operator at several times}\label{subsec_markov_several_time}
Let us begin with the following  lemma. 
\begin{lemma}[Commutativity]
Let $t_1,t_2\in \R$. Then we have $(M_{t_1}\circ M_{t_2})=(M_{t_2}\circ M_{t_1})$.
\end{lemma}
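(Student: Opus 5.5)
The plan is to show that both $M_{t_1}\circ M_{t_2}(\eeta)$ and $M_{t_2}\circ M_{t_1}(\eeta)$ equal a single explicit three-piece measure. The case $t_1=t_2$ is trivial, so assume by symmetry that $t_1<t_2$. Set $I_1=(-\infty,t_1]$, $I_2=[t_1,t_2]$ and $I_3=[t_2,+\infty)$. Let $\{\eta_{t_1,x_1,t_2,x_2}\}$ be a disintegration of $\eeta$ with respect to $(e_{t_1},e_{t_2})$ and the two-time marginal $\pi:=(e_{t_1},e_{t_2})_\#\eeta$ on $Y\times Y$. The candidate measure is
\begin{equation*}
\Theta(\eeta):=\int_{Y\times Y}\eta^{I_1}_{t_1,x_1}\otimes_{(t_1,x_1)}\eta^{I_2}_{t_1,x_1,t_2,x_2}\otimes_{(t_2,x_2)}\eta^{I_3}_{t_2,x_2}\,\pi(dx_1dx_2),
\end{equation*}
which is well defined by Lemma \ref{lem_associativity}. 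The key structural fact to use is that $M_{t}$ leaves unchanged the law of $\g\lfloor_{(-\infty,t]}$ and the law of $\g\lfloor_{[t,+\infty)}$. This follows from the characterisation in Proposition \ref{prop_tensor}, by integrating against $\mu$.

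First I compute $M_{t_2}(M_{t_1}(\eeta))$. Put $\bar\eeta:=M_{t_1}(\eeta)$. Its restriction to $[t_1,+\infty)$ agrees with that of $\eeta$, so its restriction to $I_3$, its pair marginal $\pi$, and its restriction to $I_2$ all agree with those of $\eeta$. The step that needs real work is identifying the disintegration of $\bar\eeta$ with respect to $e_{t_2}$. I claim that
\begin{equation*}
\bar\eta_{t_2,x_2}=\int_Y \eta^{I_1}_{t_1,x_1}\otimes_{(t_1,x_1)}\eta^{[t_1,+\infty)}_{t_1,x_1,t_2,x_2}\,\pi_{x_2}(dx_1),
\end{equation*}
where $\pi_{x_2}$ is the disintegration of $\pi$ along the second coordinate. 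To verify the claim, I push both sides forward by $f=r^{I_1}\times r^{[t_1,\infty)}$ and test on sets $\{\g(t_2)\in A\}\cap B$. This uses the bilinearity of Lemma \ref{lem_bilinearity} applied to $\eta^{[t_1,\infty)}_{t_1,x_1}=\int\eta^{[t_1,\infty)}_{t_1,x_1,t_2,x_2}\pi^{x_1}(dx_2)$, followed by the uniqueness argument of step 2 of that lemma, which runs through a countable generating family and Lusin. Once the claim holds, I restrict to $(-\infty,t_2]$ and to $[t_2,\infty)$ (Lemma \ref{lem_tensor_proj}), form the tensor at $(t_2,x_2)$, and move the integral in $x_1$ out of the left factor using Lemma \ref{lem_bilinearity}. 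Lemma \ref{lem_associativity} then regroups the result, giving $\Theta(\eeta)$ once we use $\mu(dx_2)\pi_{x_2}(dx_1)=\pi$.

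By the time-reversed version of the same argument, obtained by exchanging the roles of the past and future factors (see the Remark after Lemma \ref{lem_bilinearity}), $M_{t_1}(M_{t_2}(\eeta))=\Theta(\eeta)$ as well. This proves commutativity.

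The main obstacle is the disintegration identity for $\bar\eeta$ at the second time. The difficulty is to make rigorous that conditioning the Markovianised measure on $\g(t_2)=x_2$ produces a mixture, over $x_1$, of tensors built from doubly conditioned kernels. It also needs the measurability in $(x_1,x_2)$ of $\eta_{t_1,x_1,t_2,x_2}$ and of the tensor products. I would handle the identity exactly as in step 2 of Lemma \ref{lem_bilinearity}, by identifying measures through their $f$-pushforwards on a countable generating family and discarding $\mu$-null sets.
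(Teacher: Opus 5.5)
Your proposal is correct and follows essentially the same route as the paper. Both compute $(M_{t_2}\circ M_{t_1})(\eeta)$ by decomposing $\eta^{[t_1,+\infty)}_{t_1,x}$ over the doubly conditioned kernels, identifying the disintegration of $M_{t_1}(\eeta)$ at time $t_2$ as a mixture of tensors, and then applying Lemma~\ref{lem_tensor_proj}, bilinearity and associativity to reach the same three-piece integral against $(e_{t_1}\times e_{t_2})_\#\eeta$; the other order is handled by the symmetric argument. The only difference is minor: the paper verifies the disintegration identity at $t_2$ by checking concentration on $\{\g(t_2)=y\}$, integrating against $\mu(dy)$, and invoking essential uniqueness, which is lighter than your pushforward-and-generating-family verification.
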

\begin{proof}
	Let $\eeta\in \mathcal{M}_\mu(\G)$. Without loss of generality, assume that $t_1<t_2$. Define $\nu:=(e_{t_1}\times e_{t_2})_\#\eeta$ in $\mathcal{M}(Y\times Y)$. Denote the projections 
	\begin{equation}
	\begin{split}
	q_1&:Y\times Y\ni (x,y)\longmapsto x\in Y,\\
	q_2&:Y\times Y\ni (x,y)\longmapsto y\in Y.
	\end{split}
	\end{equation}
	We observe that $(q_1)_\#\nu=\mu=(q_2)_\#\nu$. Let $\{\nu_x\}_{x\in Y}$ (resp. $\{\nu_y\}_{y\in Y}$) be a disintegration of $\nu$ with respect to $q_1$ (resp. $q_2$) and $\mu$. 
Let $\{\eta_{t_1,x}\}_{x\in Y}$ (resp. $\{\eta_{t_2,y}\}_{y\in Y}$)	be disintegrations of $\eeta$ with respect to $e_{t_1}$ (resp. $e_{t_2}$) and $\mu$. Let $\{\eta_{t_1,x;t_2,y}\}_{(x,y)\in Y\times Y}$ be a disintegration of $\eta$ with respect to $e_{t_1}\times e_{t_2}$ and $\nu$.  Let us show that
	\begin{equation}
	(	M_{t_2}\circ M_{t_1})(\eeta)=\int_{Y\times Y}\eta^{(-\infty,t_1]}_{t_1,x}\otimes_{(t_1,x)}\eta^{[t_1,t_2]}_{t_1,x,;t_2,y}\otimes_{(t_2,y)}\eta^{[t_2,+\infty)}_{t_2,y}\;\nu(dx,dy).
	\end{equation}
	By essential uniqueness of the disintegration, observe that for $\mu$-a.e. $x\in Y$, we have
	\begin{equation}
	\eta^{[t_1,+\infty)}_{t_1,x}=\int_{Y}\eta^{[t_1,+\infty)}_{t_1,x;t_2,y}\nu_x(dy). 
	\end{equation}
	Therefore, by Lemma \ref{lem_bilinearity}, and Fubini's theorem, we have
	\begin{equation}
	M_{t_1}(\eeta)=\int_{Y}\eta_{t_1,x}^{(-\infty,t_1]}\otimes_{(t_1,x)}\eta_{t_1,x}^{[t_1,+\infty)}\mu(dx)=\int_{Y\times Y}\eta_{t_1,x}^{(-\infty,t_1]}\otimes_{(t_1,x)}\eta_{t_1,x;t_2,y}^{[t_1,+\infty)}\nu(dx,dy).
	\end{equation}
	Observe that $(r^{[t_2,+\infty)})_\#M_{t_1}(\eeta)=(r^{[t_2,+\infty)})_\#\eeta$, which implies that $\{\eta_{t_2,y}^{[t_2,+\infty)}\}_{y\in Y}$ is a disintegration of $(r^{[t_2,+\infty)})_\#M_{t_1}(\eeta)$ with respect to $e_{t_2}$ and $\mu$ by Lemma \ref{lem_disintegration_push_forward} and Lemma \ref{lem_tensor_proj}. 
	Observe further that $\{\int_Y\eta^{(-\infty,t_1]}\otimes_{(t_1,x)} \eta^{[t_1,+\infty)}_{t_1,x;t_2,y}\nu_y(dx)\}_{y\in Y}$ is a disintegration of $M_{t_1}(\eeta)$ with respect to $e_{t_2}$ and $\mu$ since for every $y\in Y$, the probability measure $\int_Y\eta_{t_1,x}^{(-\infty,t_1]}\otimes_{(t_1,x)} \eta^{[t_1,+\infty)}_{t_1,x;t_2,y}\nu_y(dx)$ is concentrated on $\{\g\in \G\;:\;\g(t_2)=y\}$ so that  
	\begin{equation}
M_{t_1}(\eeta)=\int_Y\Big[	\int_Y \eta_{t_1,x}^{(-\infty,t_1]}\otimes_{(t_1,x)} \eta^{[t_1,+\infty)}_{t_1,x;t_2,y}\nu_y(dx) \Big] \mu(dy).
	\end{equation}
	Also in view of Lemma \ref{lem_tensor_proj}, and Lemma \ref{lem_disintegration_push_forward} we have that $\{\int_Y\eta_{t_1,x}^{(-\infty,t_1]}\otimes_{(t_1,x)} \eta^{[t_1,t_2]}_{t_1,x;t_2,y}\nu_y(dx)\}_{y\in Y}$ is a disintegration of $(r^{(-\infty,t_2]})_\#M_{t_1}(\eeta)$ with respect to $e_{t_2}$ and $\mu$. 
	Therefore, we get by Lemma \ref{lem_bilinearity}, Lemma \ref{lem_associativity}, and Fubini's theorem that 
	\begin{equation}
	\begin{split}
	(	M_{t_2}\circ M_{t_1})(\eeta)&=\int_{ Y}\Big[\int_{Y}\eta_{t_1,x}^{(-\infty,t_1]}\otimes_{(t_1,x)}\eta_{t_1,x;t_2,y}^{[t_1,t_2]}\nu_y(dx)\Big]\otimes_{(t_2,y)} \eta^{[t_2,+\infty)}_{t_2,y}\mu(dy)\\
	&=\int_{Y\times Y}\eta_{t_1,x}^{(-\infty,t_1]}\otimes_{(t_1,x)}\eta^{[t_1,t_2]}_{t_1,x;t_2,y}\otimes_{(t_2,y)}\eta^{[t_2,+\infty)}_{t_2,y}\nu(dx,dy).
	\end{split} 
	\end{equation}
	A similar argument shows that 
	\begin{equation} 
	(	M_{t_2}\circ M_{t_1})(\eeta)=\int_{Y\times Y}\eta_{t_1,x}^{(-\infty,t_1]}\otimes_{(t_1,x)}\eta^{[t_1,t_2]}_{t_1,x;t_2,y}\otimes_{(t_2,y)}\eta^{[t_2,+\infty)}_{t_2,y}\nu(dx,dy). 
	\end{equation} 
	The thesis follows. 
\end{proof}
As a direct consequence, we have the following lemma.
\begin{lemma}
Let $\eeta\in \mathcal{M}_\mu(\G)$.	Let $N\in \N$ and let $\s$ be a permutation of the set $ \{1,\dots,  N\}$. Then for every finite subset $\{t_1,\dots,t_N\}\subset \R$, we have 
\begin{equation}
(M_{t_{\s(1)}}\circ \dots \circ M_{t_{\s(N)}})(\eeta)=(M_{t_{1}}\circ \dots \circ M_{t_{N}})(\eeta).
\end{equation}
\end{lemma}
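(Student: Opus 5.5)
The statement follows from the Commutativity lemma by a standard group-theoretic argument: every permutation of $\{1,\dots,N\}$ is a product of adjacent transpositions. I would argue by induction on the number of adjacent transpositions needed to write $\s$. Because both sides are evaluated on the fixed measure $\eeta$, the cleanest formulation is an identity of maps on $\mathcal{M}_\mu(\G)$:
\begin{equation*}
M_{t_{\s(1)}}\circ \dots \circ M_{t_{\s(N)}}=M_{t_{1}}\circ \dots \circ M_{t_{N}}.
\end{equation*}
Evaluating this identity at $\eeta$ gives the claim.

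First I would record that each $M_t$ maps $\mathcal{M}_\mu(\G)$ into itself, which is the well-definedness lemma of Section \ref{subsect_def_markov}. Hence every composite of the maps $M_{t_i}$ is a well-defined map $\mathcal{M}_\mu(\G)\to\mathcal{M}_\mu(\G)$, and it makes sense to insert any composite in front of, or behind, another one.

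Next comes the adjacent transposition step. Let $\tau=(k\;k+1)$ be an adjacent transposition and let $\rho$ be any permutation. Set $\boldsymbol{\xi}:=(M_{t_{\rho(k+2)}}\circ\dots\circ M_{t_{\rho(N)}})(\eeta)$, which lies in $\mathcal{M}_\mu(\G)$ by the previous step. The Commutativity lemma, applied to $\boldsymbol{\xi}$ with the times $t_{\rho(k)}$ and $t_{\rho(k+1)}$, shows that
\begin{equation*}
M_{t_{\rho(k)}}\circ M_{t_{\rho(k+1)}}(\boldsymbol{\xi})=M_{t_{\rho(k+1)}}\circ M_{t_{\rho(k)}}(\boldsymbol{\xi}).
\end{equation*}
Applying the remaining outer maps $M_{t_{\rho(1)}}\circ\dots\circ M_{t_{\rho(k-1)}}$ to both sides, the composite indexed by $\rho\circ\tau$ agrees at $\eeta$ with the composite indexed by $\rho$. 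This holds for every $\eeta$, so it is an identity of maps. The case $k+1=N$, where the inner composite is empty, is covered by taking $\boldsymbol{\xi}=\eeta$.

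Finally, I would write $\s=\tau_1\circ\dots\circ\tau_m$ with each $\tau_j$ adjacent, which is always possible, for instance by bubble sort. Applying the previous step $m$ times, successively with $\rho=\mathrm{id},\tau_1,\tau_1\tau_2,\dots$, gives the result. If some of the $t_i$ coincide the argument is unchanged, since the Commutativity lemma is trivial when $t_1=t_2$.

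There is no real obstacle here. The only point needing care is that the Commutativity lemma is stated pointwise for each $\eeta\in\mathcal{M}_\mu(\G)$. It therefore has to be applied to the intermediate measure $\boldsymbol{\xi}$ rather than to $\eeta$ itself, and this is legitimate precisely because $\mathcal{M}_\mu(\G)$ is invariant under each $M_t$.
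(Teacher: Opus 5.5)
Your proof is correct and follows the paper's intended route: the paper states this lemma as a direct consequence of the Commutativity lemma and gives no further proof. Your reduction to adjacent transpositions is exactly the routine argument it leaves implicit, applied to intermediate measures, which is legitimate because each $M_t$ maps $\mathcal{M}_\mu(\G)$ into itself.
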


In view of the above lemma, we now give the following definition. 
\begin{definition} 
Let $\eeta\in \mathcal{M}_\mu(\G)$. Let $F$ be finite subset of $\R$. We define the Markovianisation operator on $\eeta$ at times $F$ denoted by $M_F(\eeta)$ through
\begin{equation} 
M_F(\eeta)=(M_{t_1}\circ \dots \circ M_{t_N})(\eeta),
\end{equation} 
where $\{t_1,\dots,t_N\}$ is any ordering of $F$, not necessarily compatible with the ordering of real numbers. 
\end{definition}

\bigskip 

\section{The disintegration map} \label{sec_disint_map}

We set ourselves in the setting of Section \ref{subsec_measure_space_cont_paths}. Let $\G$ be the space of continuous paths; let $\mathcal{M}(\G)$ be the uniform space of Radon measures on $\G$; let $\mu$ be a positive Radon measure on $Y$; and let $\mathcal{M}_\mu(\G)$ be the subuniform space consisting of every $\eeta\in\mathcal{M}(\G)$ such that $(e_t)_\#\eeta=\mu$ for every $t\in \R$.


\bigskip 

\subsection{The space $\mathcal{Z}_\mu$ of disintegrations with respect to $\mu$}

We endow $\supp\mu\subset Y$ with the induced uniformity, and consider the set of maps
\begin{equation}
{Z}_\mu:=\bigcup_{t\in\R}\Big\{\{\nu_x\}_{x\in \supp\mu}\subset \mathcal{P}(\G)\;:\; \{\nu_x\}_{x\in \supp\mu}\text{ is a Borel family and }\nu_x(\{\g(t)=x\})=1 \;\forall x\in \supp\mu\Big\}.
\end{equation}
Let us introduce the following relation. Letting $\{\nu^1_x\}_{x\in \supp\mu},\{\nu^2_x\}_{x\in \supp\mu}\in Z_\mu$, we shall write $$\{\nu^1_x\}_{x\in \supp\mu}\sim \{\nu^2_x\}_{x\in \supp\mu},$$ if there exists a $\mu$-negligeable set $N\subset \supp\mu$ such that for every $x\in \supp\mu\backslash N$, we have $\nu^1_x=\nu^2_x$. It can be checked that $\sim$ is an equivalence relation. We then define the set of equivalence classes on $Z_\mu$ by 
\begin{equation}
\mathcal{Z}_\mu:=\equivclass{Z_\mu }. 
\end{equation}
Elements of $\mathcal{Z}_\mu$ will be designated by bold font lowercase Greek letters for instance $\nnu$, and $\{\nu_x\}_{x\in\supp\mu}$ will then designate an element in the equivalence class of $\nnu$. 
\subsubsection{An integral on $\mathcal{Z}_\mu$} Let us introduce an integral on $\mathcal{Z}_\mu$ with respect to a test function.
The class of test functions $\mathscr{T}$ will be a countable family of nonnegative compactly supported functions of $Y$, whose integral  against $\mu$ equals one. 
Indeed $C_c(Y)$ is separable by Theorem \ref{thm_separable}. Therefore the non-empty subspace $\{\phi\in C_c(Y)\;:\;\int_Y\phi(y)\mu(dy)=1,\;\phi\geq 0\}$ is also separable, of which we let $\mathscr{T}$ be a countable and dense subset. 

Given $\phi\in \mathscr{T}$, we define the integral $$\mathcal{I}_\phi:\mathcal{Z}_\mu\ni \nnu\longmapsto \mathcal{I}_\phi(\nnu)\in \mathcal{M}(\G),$$ as the unique Radon measure in $\mathcal{M}(\G)$ -- which is canonically identified to continuous positive linear functional on $C_c(\G)$  -- satisfying
\begin{equation}
\<\mathcal{I}_\phi(\nnu),\Phi\>= \int_{Y}\int_\G\Phi(\g)\nu_x(d\g)\phi(x)\mu(dx) \qquad\forall\Phi\in C_c(\G),
\end{equation}
for some $\{\nu_x\}_{x\in \supp\mu}$ in the equivalence class of $\nnu$. 
We will also write 
\begin{equation}
\mathcal{I}_\phi(\nnu)= \int_{Y}\nu_x\phi(x)\mu(dx).
\end{equation}
\begin{lemma}
	Let $\phi\in \mathscr{T}$ and $\nnu\in \mathcal{Z}_\mu$. Then  $\mathcal{I}_\phi(\nnu)$ is well-defined and $\mathcal{I}_\phi(\nnu)\in \mathcal{P}(\G)$. 
	\end{lemma}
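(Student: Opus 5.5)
The plan has three steps: show the defining integral is a positive continuous linear functional (so Riesz--Markov--Kakutani gives a measure), check independence of the representative, and compute the total mass.

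\textbf{Step 1: the functional is well-defined and positive.} Fix a representative $\{\nu_x\}_{x\in\supp\mu}$ of $\nnu$. For $\Phi\in C_c(\G)$, the map $x\mapsto\int_\G\Phi(\g)\nu_x(d\g)$ is Borel, since the family is Borel. It is bounded by $\|\Phi\|_{C^0}$, because each $\nu_x$ is a probability measure. As $\phi$ is bounded, compactly supported and $\mu$ is Radon, the product with $\phi$ is $\mu$-integrable. Hence
\[
\Lambda(\Phi):=\int_Y\int_\G\Phi(\g)\nu_x(d\g)\phi(x)\mu(dx)
\]
is finite, linear in $\Phi$, and satisfies $|\Lambda(\Phi)|\le\|\Phi\|_{C^0}\int_Y\phi\,d\mu=\|\Phi\|_{C^0}$. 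It is nonnegative when $\Phi\ge0$, since $\phi\ge0$. So $\Lambda$ is a positive continuous linear functional on $C_c(\G)$. Since $\G$ is a locally compact Polish space (Section \ref{subsec_space_G}), the Riesz--Markov--Kakutani theorem yields a unique Radon measure $\mathcal{I}_\phi(\nnu)\in\mathcal{M}(\G)$ representing $\Lambda$.

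\textbf{Step 2: independence of the representative.} If $\{\nu^1_x\}\sim\{\nu^2_x\}$, the two integrands coincide outside a $\mu$-negligible set $N$. Hence the two functionals coincide, and by uniqueness in Riesz--Markov--Kakutani so do the measures.

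\textbf{Step 3: total mass one.} This is the only point needing care, because the constant $1$ is not in $C_c(\G)$. I would take an increasing sequence $\Phi_k\in C_c(\G)$ with $0\le\Phi_k\le1$ and $\Phi_k\uparrow1$ pointwise on $\G$; this exists since $\G$ is locally compact and $\sigma$-compact. By inner regularity, $\mathcal{I}_\phi(\nnu)(\G)=\sup_k\Lambda(\Phi_k)$. For each $x$, monotone convergence gives $\int_\G\Phi_k\,d\nu_x\uparrow\nu_x(\G)=1$. Applying monotone convergence once more in $x$ gives
\[
\Lambda(\Phi_k)\uparrow\int_Y\phi\,d\mu=1 .
\]
Therefore $\mathcal{I}_\phi(\nnu)(\G)=1$, that is, $\mathcal{I}_\phi(\nnu)\in\mathcal{P}(\G)$.
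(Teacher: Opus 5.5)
Your proof is correct. Step 2 is the same as the paper's argument for independence of the representative. Step 3, the total-mass computation, takes a different route.

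\textbf{The paper's route.} It never passes to a limit. It fixes $t$ with $\nu_x(\{\g(t)=x\})=1$ and a compact $K\supset\supp\phi$. It then uses (Comp-$\G$) to obtain a single $\Phi_0\in C_c(\G)$ with $\Phi_0\equiv1$ on the compact set $e_t^{-1}(K)$. Concentration of $\nu_x$ gives $\int\Phi_0\,d\nu_x=1$ for $x\in K$, hence $\Lambda(\Phi_0)=1$. Also $\Lambda(\Phi)=0$ whenever $\Phi$ vanishes on $e_t^{-1}(K)$. By Riesz--Markov--Kakutani the measure therefore has mass one and is concentrated on $e_t^{-1}(K)$.

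\textbf{Comparison.} Your monotone-convergence argument with an exhaustion $\Phi_k\uparrow1$ is more elementary and more general. It uses only that each $\nu_x$ is a probability measure, that the family is Borel, and that $\int\phi\,d\mu=1$. It needs neither the concentration condition built into $Z_\mu$ nor (Comp-$\G$).

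What it does not give is the localization $\mathcal{I}_\phi(\nnu)(\G\setminus e_t^{-1}(K))=0$. The paper's route provides this for free, and it is exactly what is used later in step 2.ter of Lemma \ref{lem_disint_map}. There it places $\mathcal{I}_\phi(\mathbf{H}(t))$ inside the vaguely compact set $\mathcal{P}(e_t^{-1}(K_\phi))$. With your approach you would have to prove that support property separately at that point.

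A small remark on your justification: $\mathcal{I}_\phi(\nnu)(\G)=\sup_k\Lambda(\Phi_k)$ follows most directly from monotone convergence applied to $\mathcal{I}_\phi(\nnu)$ itself, since $\Lambda(\Phi_k)=\int\Phi_k\,d\mathcal{I}_\phi(\nnu)$. Appealing to inner regularity also works, but then you need Dini's theorem on each compact set to compare $\Phi_k$ with indicator functions.
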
 
\begin{proof}

	1. Let us prove that $\mathcal{I}_\phi(\nnu)$ is well-defined. Let $\{\nu_x^1\}_{x\in\supp\mu}, \{\nu_x^2\}_{x\in\supp\mu}\in Z_\mu$ such that $\nu_x^1=\nu^2_x$ for $\mu$-a.e. $x\in \supp\mu$. Then we have 
	\begin{equation}
	\int_Y\nu^1_x\phi(x)\mu(dx)=	\int_Y\nu^2_x\phi(x)\mu(dx),
	\end{equation}
	whence by Fubini's theorem, we get 
	\begin{equation}
	\int_{Y}\int_\G\Phi(\g)\nu^1_x(d\g)\phi(x)\mu(dx)=	\int_{Y}\int_\G\Phi(\g)\nu^2_x(d\g)\phi(x)\mu(dx) \qquad\forall\Phi\in C_c(\G),
	\end{equation}
	whereby $\mathcal{I}_\phi(\nnu)$ is well-defined. 
	\bigskip
	
2. Let $t\in \R$ such that $\nu_x(\{\g(t)=x\})=1$. Let $K$ be compact in $Y$ such that $\supp\phi\subset K$. Recall that $e_t^{-1}(K)$ is compact by Section \ref{subsec_space_G} So there exists $\Phi_0\in C_c(\G)$ such that $\Phi_0\equiv 1$ on $e_t^{-1}(K)$. Then since $\Phi_0\equiv 1$ on $\supp\nu_x$ for every $x\in e_t^{-1}(K)$, we have 
	\begin{equation}
	\int_Y\int_\G\Phi_0(\g)\nu_x(d\g)\phi(x)\mu(dx)=\int_Y\phi(x)\mu(dx)=1,
	\end{equation}
	and also for every $\Phi\in C_c(\G)$ such that $\Phi\equiv 0$ on $e_t^{-1}(K)$, we have 
	\begin{equation}
	\int_Y\int_\G\Phi(\g)\nu_x(d\g)\phi(x)\mu(dx)=0. 
	\end{equation}
	Therefore, in view of the Riesz-Markov-Kakutani characterisation of Radon measures, $\mathcal{I}_\phi(\nnu)(e^{-1}_t(K))=1$ and $\mathcal{I}_\phi(\nnu)(e^{-1}_t(K)^c)=0$, whence $\mathcal{I}_\phi(\nnu)\in\mathcal{P}(\G)$. 
\end{proof}

\subsubsection{A uniform structure on $\mathcal{Z}_\mu$}\label{subsec_uniform_Z_mu} Let us now endow $\mathcal{Z}_\mu$ with the intial uniformity on the set $\mathcal{Z}_\mu$  with respect to the maps $\mathcal{I}_\phi$ where $\phi$ runs through $\mathscr{T}$, i.e. $\mathcal{Z}_\mu$ is equipped with the coarsest uniform structure which makes the maps $\mathcal{I}_\phi$ uniformly continuous for every $\phi\in \mathscr{T}$.

Let us describe a fundamental system of entourages of $\mathcal{Z}_\mu$. 
For each $\phi\in \mathscr{T}$, and each entourage $V$ of $\mathcal{P}(\G)$, we define the set 
\begin{equation} \label{eqn_fund_entour}
V_{\phi}:=\Big\{(\nnu^1,\nnu^2)\in \mathcal{Z}_\mu\times \mathcal{Z}_\mu\;:\;\Big(\int_{Y} \nu^1_y\phi(y)\mu(dy), \int_{Y} \nu^2_y\phi(y)\mu(dy)\Big)\in V\Big\}.
\end{equation} 
The family  of sets $\bigcap_{i=1}^N V_{\phi_i}$
where $V$ runs through the entourages the fundamental system of entourages $\mathfrak{V}$ of $\mathcal{P}(\G)$ described in Section \ref{subsec_radon_proba}, $N$ runs through $\N$ and $\phi_i$ run in $\mathscr{T} $ forms a fundamental system of entourages of the uniformity on $\mathcal{Z}_\mu$. 

\subsection{The disintegration map $\d$}
In this section, we will define a disintegration map $\d$ on $\mathcal{M}_\mu(\G)$ and collect properties of this map.
\subsubsection{The space of continuous maps from $\R$ to $\mathcal{Z}_\mu$}\label{subsec_C_c(R;Z_mu)}
Let $\mathcal{C}_c(\R;\mathcal{Z}_\mu)$ denote the uniform space of compact convergence on the set of continuous maps from $\R$ to $\mathcal{Z}_\mu$. 
A fundamental system of entourage of $\mathcal{C}_c(\R;\mathcal{Z}_\mu)$ is then given by the sets $\WW(I;\cap_{i=1}^NV_{\phi_i})$ where $\cap_{i=1}^NV_{\phi_i}$ runs in the fundamental system of entourages described in Section \ref{subsec_uniform_Z_mu} and $I$ runs in the compacts in $\R$. 

\subsubsection{Definition of the disintegration map $\d$}
Given $\eeta\in \mathcal{M}_\mu(\G)$ and $t\in \R$, by essential uniqueness of the disintegration, there exists a unique element in $\mathcal{Z}_\mu$, which we shall denote by $\eeta_t$, and is such that every disintegration $\{\eta_{t,x}\}_{x\in \supp\mu}$ with respect to $e_t$ and $\mu$ belongs to the equivalence class $\eeta_t$.
We now define the disintegration map $\d:\mathcal{M}_\mu(\Gamma)\longrightarrow C(\R; \mathcal{Z}_\mu)$ by $\d(\eeta):\R\ni t\longmapsto \eeta_t\in \mathcal{Z}_\mu$. 

\subsubsection{Properties of the disintegration map $\d$}
The following lemma gives useful properties of the disintegration map $\d$. 
\begin{lemma}\label{lem_disint_map}
	The map $\d$ is well-defined, uniformly continuous and its image is precompact in $\mathcal{C}_c(\R;\mathcal{Z}_\mu)$. 
\end{lemma}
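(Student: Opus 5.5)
The whole argument rests on one identity. For $\eeta\in\mathcal{M}_\mu(\G)$, $t\in\R$ and $\phi\in\mathscr{T}$, formula \eqref{eqn_int_disintegration} applied to the disintegration $\{\eta_{t,x}\}_{x\in\supp\mu}$ with respect to $e_t$ and $\mu$ gives
\begin{equation*}
\<\mathcal{I}_\phi(\eeta_t),\Phi\>=\int_Y\int_\G \Phi(\g)\eta_{t,x}(d\g)\phi(x)\mu(dx)=\int_\G \Phi(\g)\,\phi(\g(t))\,\eeta(d\g)\qquad\forall \Phi\in C_c(\G).
\end{equation*}
This holds because $\eta_{t,x}$ is concentrated on $\{\g(t)=x\}$, so $\phi(x)=\phi(\g(t))$ $\eta_{t,x}$-a.e. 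Hence $\mathcal{I}_\phi(\eeta_t)=(\phi\circ e_t)\,\eeta$.

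\emph{Well-definedness.} By essential uniqueness of the disintegration, $\eeta_t\in\mathcal{Z}_\mu$ is a well-defined class. It remains to show that $t\mapsto\eeta_t$ is continuous into $\mathcal{Z}_\mu$. Since $\mathcal{Z}_\mu$ carries the initial uniformity with respect to the maps $\mathcal{I}_\phi$, by Proposition \ref{prop_intial} it suffices to check, for each $\phi\in\mathscr{T}$ and $\Phi\in C_c(\G)$, that
\[
t\mapsto\int_\G\Phi(\g)\phi(\g(t))\,\eeta(d\g)
\]
is continuous. Fix a compact $I\subset\R$. The set $\supp\Phi$ is compact in $\mathcal{C}_c(\R;Y)$, so by Ascoli's theorem its restrictions to $I$ are equicontinuous. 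Since $\phi$ is uniformly continuous (it has compact support), the map $I\ni t\mapsto \Phi\cdot(\phi\circ e_t)\in C_c(\G)$ is continuous for the sup norm. All these functions are supported in the fixed compact $\supp\Phi$, and $\eeta(\supp\Phi)<\infty$, so continuity of the integral follows.

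\emph{Uniform continuity.} Take a basic entourage $\WW(I;\bigcap_{i=1}^N V_{\phi_i})$ of $\mathcal{C}_c(\R;\mathcal{Z}_\mu)$, with $V$ determined by $\Phi_1,\dots,\Phi_M\in\mathscr{D}$ and $\a>0$. We need a vague entourage on $\mathcal{M}_\mu(\G)$ guaranteeing
\[
\sup_{t\in I}\Big|\int\Phi_j\,(\phi_i\circ e_t)\,d(\eeta^1-\eeta^2)\Big|\le\a\qquad\text{for all } i,j.
\]
The plan has three steps.
\begin{itemize}
\item By the previous step, the family $\mathcal{K}_{ij}=\{\Phi_j\,(\phi_i\circ e_t):t\in I\}$ is compact in sup norm. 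All its members are supported in $\supp\Phi_j$.
\item The mass of elements of $\mathcal{M}_\mu(\G)$ on $\supp\Phi_j$ is bounded uniformly. Indeed, $\supp\Phi_j\subset e_0^{-1}(K_j)$ for some compact $K_j\subset Y$, so $\eeta(\supp\Phi_j)\le\mu(K_j)=:C_j$ for every $\eeta\in\mathcal{M}_\mu(\G)$.
\item Cover each $\mathcal{K}_{ij}$ by finitely many sup-norm balls of radius $\a/(4C_j)$, centred at $\Psi_{ij}^1,\dots,\Psi_{ij}^{L}\in C_c(\G)$. The vague entourage $\bigcap_{i,j,l}{\rm d}^{-1}_{\Psi^l_{ij}}([0,\a/2])$ then works, by the same triangle inequality as in Lemma \ref{lem_separability}.
\end{itemize}
I expect this step to be the main obstacle. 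The delicate points are making the equicontinuity argument yield a sup-norm compact family inside $C_c(\G)$, and obtaining the uniform mass bound. Both rely on (Comp-$\G$).

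\emph{Precompactness.} By Lemma \ref{lem_mu_invariant_space}, $\mathcal{M}_\mu(\G)$ is compact. The map $\d$ is uniformly continuous, hence continuous, so $\d(\mathcal{M}_\mu(\G))$ is compact and in particular precompact in $\mathcal{C}_c(\R;\mathcal{Z}_\mu)$. As a cross-check, one could also verify the two conditions of Ascoli's theorem directly. Pointwise precompactness would come from compactness of $\{(\phi\circ e_t)\eeta:\eeta\in\mathcal{M}_\mu(\G)\}$ in $\mathcal{P}(\G)$, combined with Tychonov over the countable set $\mathscr{T}$; this is where countable choice enters. Equicontinuity would come from the uniform-in-$\eeta$ estimate above.
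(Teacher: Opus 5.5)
Your proof is correct. It departs from the paper's proof mainly in the precompactness step.

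\textbf{Well-definedness and uniform continuity.} You base both on the identity $\mathcal{I}_\phi(\eeta_t)=(\phi\circ e_t)\,\eeta$, which the paper never states. The paper instead introduces the two-time coupling $(e_{t_0},e_t)_\#\eeta$ with a joint disintegration $\eta_{t_0,x;t,y}$, and controls $|\phi(x)-\phi(y)|$ on its support via (Obs). The underlying mechanism is the same: a compact set of paths is equicontinuous, $\phi$ is uniformly continuous, and masses on compacts are bounded by $\mu$ of a compact. Your finite net of $\mathcal{K}_{ij}$ corresponds to the paper's finite subcover $\{J_{t_j}\}$ of $I$. Your identity also makes explicit something the paper leaves implicit: its set $V'$ is a vague entourage, given by the test functions $\Phi_i\,(\phi\circ e_{t_j})\in C_c(\G)$.

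\textbf{Precompactness.} Here the routes genuinely differ. The paper verifies Ascoli's conditions. It obtains pointwise precompactness by embedding $\mathcal{Z}_\mu$ into $\prod_{\phi\in\mathscr{T}}\mathcal{P}(e_t^{-1}(K_\phi))$, which needs the separation property of $\{\mathcal{I}_\phi\}_{\phi\in\mathscr{T}}$ and Tychonov for countable products. You instead take the continuous image of the compact space $\mathcal{M}_\mu(\G)$ from Lemma \ref{lem_mu_invariant_space}.

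\textbf{Comparison.} Your route is shorter, gives compactness (not just precompactness) of the image of $\d$, and avoids Tychonov in this lemma. The paper's route does not rely on compactness of $\mathcal{M}_\mu(\G)$. As a by-product it shows $\mathcal{Z}_\mu$ is Hausdorff, which is what later makes limits in $\mathcal{C}_c(\R;\mathcal{Z}_\mu)$ unique, as used for $\iota^*\rrho=\d(\bar\eeta)$ in Lemma \ref{lem_markov_all_times}.

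\textbf{One precision.} Choose the centres $\Psi^l_{ij}$ inside $\mathcal{K}_{ij}$, which is possible because it is compact. Then the differences are supported in $\supp\Phi_j$ and the constant $C_j$ applies. With arbitrary centres in $C_c(\G)$, the error integrals would not be controlled by $C_j$, since $\eeta$ may have infinite mass.
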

\begin{proof}

		To prove that $\d$ is well-defined, it suffices to show that for every $\eeta\in \mathcal{M}_\mu(\G)$, the map $$ f^{\eeta}:\R\ni t\longmapsto \eeta_t\in \mathcal{Z}_\mu$$ is continuous. 
		Recall that the uniformity on $\mathcal{Z}_\mu$ is initial with respect to the maps $\mathcal{I}_\phi$ where $\phi$ runs in $\mathscr{T}$.
	So by Proposition \ref{prop_intial}, we have that $f^{\eeta}$ is continuous, if and only if $\mathcal{I}_\phi \circ f^{\eeta}$ are continuous for each $\phi\in \mathscr{T}$, whence it suffices to prove the latter to get well-definedness.

		\bigskip 
		1. 		To prove that that $\d$ is well-defined -- and also that the image of $\d$ is precompact in $\mathcal{C}_c(\R;\mathcal{Z}_\mu)$ -- we first claim that for each $\phi\in \mathscr{T}$ and each compact interval $I\subset \R$, the family
		\begin{equation}
		\Big\{f_\phi^{\eeta,I} : I\ni t \ni \longmapsto \mathcal{I}_\phi(\eeta_t)\in \mathcal{P}(\G)\Big\}_{\eeta\in \mathcal{M}_\mu(\G)}
		\end{equation}
		is equicontinuous. 
		Let $I\subset \R$ be compact. 
		Let $t_0\in I$ and let $V\in\mathfrak{V}$ be an entourage of the fundamental system of the uniformity on $\mathcal{P}(\G)$ described in Section \ref{subsec_radon_proba}. It suffices to exhibit an open neighborhood $J\subset I$ of $t_0$ such that for every $\eeta\in \mathcal{M}_\mu(\G)$, and every $t\in J$ we have 
		\begin{equation}
	(	f^{\eeta,I}_\phi(t_0),	f^{\eeta,I}_\phi(t))\in V.
		\end{equation}
		
		Recall that $V$ has the form 
		\begin{equation}
		V=\bigcap_{i=1}^N\Big\{(\rrho^1,\rrho^2)\in \mathcal{P}(\G)\times \mathcal{P}(\G)\;:\;\Big|\int_\G \Phi_i(\g)\rrho^1(d\g)-\int_\G\Phi_i(\g)\rrho^2(d\g)\Big|<\a\Big\}. 
		\end{equation}
		
		Let $\eeta\in \mathcal{M}_\mu(\G)$ and define the measure $\nu:=(e_{t_0},e_t)_\#\eeta$ on $Y\times Y$. Define the projection maps 
		\begin{equation}
		\begin{split} 
		&q_1:Y\times Y\ni(y_1,y_2)\longmapsto y_1\in Y,\\
				&q_2:Y\times Y\ni(y_1,y_2)\longmapsto y_2\in Y.\\
		\end{split} 
		\end{equation}
		Let $\{\nu_y\}_{y\in Y}$ be a disintegration of $\nu$ with respect to $q_2$ and $\mu$. 
		Let $K$ be compact in $Y$ such that $\supp \phi \subset K$ and $e_{t_0}(\cup_{i=1}^n\supp \Phi_i)\subset K$. 
		Then $e_{t_0}^{-1}(K)$ is compact in $\G$ by Section \ref{subsec_space_G}, which from we deduce the following.

	\begin{itemize} 
	\item[(Obs)]	Observe that for each entourage $V'$ of $Y$ there exists an open neighborhood $J$ of $t_0$ such that $\supp\nu\cap K\times Y \subset V'$ whenever $t\in J$. 
		\end{itemize} 
		 Let also $\{\eta_{t_0,x;t,y}\}_{(x,y)\in Y\times Y}$ be a disintegration of $\eeta$ with respect to $e_{t_0}\times e_t$ and $\nu$. 
		 Notice that for $\mu$-a.e. $y\in Y$, we have
		 \begin{equation}
		 \eta_{t,y}=\int_{Y} \eta_{t_0,x;t,y}\nu_y(dx).
		 \end{equation}

		We then have 
		\begin{equation}
		\begin{split} 
		f^{\eeta,I}_\phi(t)=\int_{Y}\eta_{t,y}\phi(y)\mu(dy)&=\int_{Y\times Y}\eta_{t_0,x;t,y}\phi(y)\nu_y(dx)\mu(dy)\\
		&=\int_{Y\times Y}\eta_{t_0,x;t,y}\phi(y)\nu(dx,dy). 
		\end{split} 
		\end{equation}
		
		Similarly, we also have 
			\begin{equation}
		f^{\eeta,I}_\phi(t_0)=\int_{Y\times Y}\eta_{t_0,x;t,y}\phi(x)\nu(dx,dy). 
		\end{equation}

		Let us define the functions $\phi_x(x,y):=\phi(x)$ and $\phi_y(x,y):=\phi(y)$. Let $\e>0$.
		By uniform continuity of $\phi$ on $Y$, there exists an entourage $V'$ of $Y$ such that 
		\begin{equation}
		\sup_{(x,y)\in V'}|\phi_x(x,y)-\phi_y(x,y)|<\e.
		\end{equation} 
		In view of (Obs), there is an open neighborhood $J$ of $t_0$ such that for each $t\in J$, we have
		\begin{equation}
		\sup_{(x,y)\in \supp \nu\cap K\times Y} |\phi_x(x,y)-\phi_y(x,y)|<\e.
		\end{equation}
		Therefore,
		\begin{equation}
		\begin{split} 
		\Big|\int_{\G}\Phi_i(\g) [f_\phi^{\eeta,I}(t)](d\g)-\int_{\G}\Phi_i(\g) [f_\phi^{\eeta,I}(t_0)](d\g)\Big|
		&=\Big|\int_{Y\times Y}\int_\G\Phi_i(\g)\eta_{t_0,x;t,y}(d\g)(\phi_x(x,y)-\phi_y(x,y))\nu(dx,dy)\Big|\\
		&=\Big|\int_{K\times Y}\int_\G\Phi_i(\g)\eta_{t_0,x;t,y}(d\g)(\phi_x(x,y)-\phi_y(x,y))\nu(dx,dy)\Big|\\
		&\leq \e \|\Phi_i\|_{C^0}\mu(K). 
		\end{split} 
		\end{equation}
		where in the second equality, we have used that $\supp \eta_{t_0,x;t,y}\cap \supp\Phi_i=\emptyset$ whenever $x\notin K$. 
		Now choose $\e>0$, and an open neighborhood $J$ of $t_0$ correspondingly such that for every $t\in J$, we have 
		\begin{equation}
		\Big|\int_{\G}\Phi_i(\g) [f_\phi^{\eeta,I}(t)](d\g)-\int_{\G}\Phi_i(\g) [f_\phi^{\eeta,I}(t_0)](d\g)\Big|<\a.
		\end{equation}
		Then for every $t\in J$, we have
		\begin{equation}
		(f_\phi^{\eeta,I}(t_0),f_\phi^{\eeta,I}(t))\in V.
		\end{equation}
		Since $t_0$ was arbitrary in $I$, and $\eeta$ was arbitrary in $\mathcal{M}_\mu(\G)$, the claim follows. This clearly implies that $\mathcal{I}_\phi\circ f^{\eeta}$ are continuous for each $\phi\in \mathscr{T}$ whereby well-definedness of $\d$ follows.

	\bigskip 
	
		2. Let us prove that the image of $\d$ is precompact in $\mathcal{C}_c(\R;\mathcal{Z}_\mu)$. By Ascoli's theorem, it suffices to show
		\begin{enumerate} 
	\item	equicontinuity for each $I\subset \R$ compact of the family of maps
	\begin{equation}
	\Big\{ f^{\eeta,I}:I\ni t\longmapsto \eeta_t\in \mathcal{Z}_\mu\}_{\eeta\in \mathcal{M}_\mu(\G)};
	\end{equation}
	\item for each $t\in \R$, the set ${\mathbf H}(t):= \{\eeta_t\;:\; \eeta\in\mathcal{M}_\mu(\G)\}$ is precompact in $\mathcal{Z}_\mu$. 
\end{enumerate}

As $\mathcal{Z}_\mu$ is equipped with the initial uniformity with respect to the maps $\mathcal{I}_\phi$ where $\phi$ runs in $\mathscr{T}$, for every $I$ compact in $\R,$ by Proposition \ref{prop_intial}, the family $\{f^{\eeta,I}\;:\; \eeta\in\mathcal{M}_\mu(\G)\}$ is equicontinuous, if and only if the families $\{\mathcal{I}_\phi\circ f^{\eeta,I}\;:\; \eeta\in\mathcal{M}_\mu(\G)\}$ are equicontinuous for every $\phi\in \mathscr{T}$, and the latter was shown in 1. of this proof.
Let us show $(ii)$.

\bigskip 

2.bis. Let us show that the family $ \{\mathcal{I}_\phi\}_{\phi\in \mathscr{T}}$ separates $\mathcal{Z}_\mu$. Let $\nnu^1,\nnu^2\in \mathcal{Z}_\mu$ such that $\mathcal{I}_{\phi}(\nnu^1)=\mathcal{I}_{\phi}(\nnu^2)$ for every $\phi\in \mathscr{T}$. Let $\mathscr{D}$ be a countable and dense subfamily of $C_c(\G)$. 
Let $\Phi\in \mathscr{D},$ and let $\{\nu^1_x\}_{x\in \supp\nu}$ and $\{\nu^2_x\}_{x\in \supp\nu}$ be in the equivalence class of $\nnu^1$ and $\nnu^2$ respectively. Observe that for every $\phi\in \mathscr{T}$, we have
\begin{equation}
\int_{Y}\phi(x)\Big[\int_\G\Phi(\g)\nu^1_{x}(d\g)\Big]\mu(dx)=\int_Y\phi(x)\Big[\int_\G\Phi(\g)\nu^2_{x}(d\g)\Big]\mu(dx).
\end{equation}
Therefore, as $\mathscr{T}$ is dense in $\{\phi\in C_c(Y):\int_Y\phi(y)\mu(dy)=1,\;\phi\geq 0\}$, by Lusin's theorem, there exists a set $N_\Phi\subset Y$ such that $\mu(N_\Phi)=0$ and for every $x\in Y\backslash N_\Phi$, we have 
\begin{equation}
\int_\G\Phi(\g)\nu_x^1(d\g)=\int_\G\Phi(\g)\nu^2_x(d\g). 
\end{equation}
Let now $N:=\cup_{\Phi\in \mathscr{D}} N_\Phi$, which is also a $\mu$-negligeable set. Then by density of $\mathscr{D}$ in $C_c(\G)$, for every $x\in Y\backslash N$, we have $\nu^1_x=\nu^2_x.$, whence $\nnu^1=\nnu^2$. 

\bigskip 
 
2.ter. Let us conclude $(ii)$. For every $\phi\in \mathscr{T}$, let $K_\phi\subset Y$ be compact such that $\supp\phi \subset K_\phi$. Then the set $e_t^{-1}(K_\phi)$ is compact in $\G$ by Section \ref{subsec_space_G}, and we have that $\mathcal{I}_\phi({\mathbf H}(t))\subset \mathcal{P}(e_t^{-1}(K_\phi))$. By Proposition \ref{prop_comp_measure}, the space $\mathcal{P}(e_t^{-1}(K_\phi))$ is vaguely compact. 
Note also that the product topological space $\Pi_{\phi\in \mathscr{T}}\mathcal{P}(e_t^{-1}(K_\phi))$ is compact by Tychonov's theorem for countable product. (This theorem can be proved within Zermalo-Fraenkel and the Axiom of Dependant Choice by \cite[Proposition 4.72]{herrlich_axiom_of_choice}). Therefore $(\mathcal{I}_\phi({\mathbf H}(t)))_{\phi\in \mathscr{T}}\subset \Pi_{\phi\in \mathscr{T}}\mathcal{P}(e_t^{-1}(K_\phi))$ is precompact. 
Observe also that the map $$\mathcal{Z}_\mu\ni \nnu \longmapsto (\mathcal{I}_\phi(\nnu))_{\phi\in \mathscr{T}}\in \Pi_{\phi\in \mathscr{T}}\mathcal{P}(e_t^{-1}(K_\phi))$$
is a homeomorphism onto its image since $\{\mathcal{I}_{\phi}\}_{\phi\in\mathscr{T}}$ separates $\mathcal{Z}_\mu$. Therefore ${\mathbf H}(t)$ is precompact in $\mathcal{Z}_\mu$, and so the image of $\d$ is precompact in $\mathcal{C}_c(\R;\mathcal{Z}_\mu)$. 

\bigskip 
		\bigskip

		3. Let us finally prove uniform continuity of $\d$. 
			Recall the fundamental system of entourages $\mathfrak{V}$ of $\mathcal{P}(\G)$ described in Section \ref{subsec_radon_proba}, and let $V\in\mathfrak{V}$. 
	Let $\cap_{i=1}^NV_{\phi_i}$ be an entourage in the fundamental system of entourages of $\mathcal{Z}_\mu$ described in Section \ref{subsec_uniform_Z_mu}. Furthermore, let $\WW(I;\cap_{i=1}^NV_{\phi_i})$ be an entourage in the fundamental system of $\mathcal{C}_c(\R;\mathcal{Z}_\mu)$ described in Section \ref{subsec_C_c(R;Z_mu)}. Let $(\eeta^1,\eeta^2)\in \WW(I;\cap_{i=1}^NV_{\phi_i})$, then we have $(\mathcal{I}_{\phi_i}(\eeta_t^1), \mathcal{I}_{\phi_i}(\eeta_t^2))\in V_{\phi_i}$ for every $t\in I$ and every $i=1,\dots, N$. 
		
		 By 1. of this proof, for every $t\in I$, there exists an open neighborhood $J_t\subset I$ containing $t$ such that for every $s\in J_t$,  every $\eeta\in \mathcal{M}_\mu(\G)$, and every $i=1,\dots,N$, we have 
		 \begin{equation}
	\Big|\int_Y\int_\G \Phi_i(\g)\phi(x)\eta_{t,x}(d\g)\mu(dx)- \int_Y\int_\G\Phi_i(\g)\phi(x)\eta_{s,x}(d\g)\mu(dx)\Big|<\a/3.
		 \end{equation}
		 Therefore, for every $t\in I$, whenever we have
		 \begin{equation}\label{eqn_condition_a/3}
		 \Big|\int_Y \int_\G \Phi_i(\g)\phi(x)\eta_{t,x}^1(d\g)\mu(dx)- \int_Y \int_\G \Phi_i(\g)\phi(x)\eta_{t,x}^2(d\g)\mu(dx)\Big|<\a/3,
		 \end{equation}
		 then for every $s\in J_t$, we have 
		 \begin{equation}
		 \Big|\int_Y\int_\G \Phi_i(\g)\phi(x)\eta^1_{s,x}(d\g)\mu(dx)- \int_Y\int_\G\Phi_i(\g)\phi(x)\eta^2_{s,x}(d\g)\mu(dx)\Big|<\a.
		 \end{equation}
		 By compactness of $I$, there exists a finite set $\{t_1,\dots,t_k\}\subset I$ such that $\{J_{t_j}\}_{1\leq j\leq k}$ is an open cover of $I$, which is such that for every $j=1,\dots,k$, every $s\in J_{t_j}$, and every $(\eeta^1,\eeta^2)\in \mathcal{M}_\mu(\G)\times \mathcal{M}_\mu(\G)$ such that \eqref{eqn_condition_a/3} holds with $t=t_j$, we have
		 	 \begin{equation}
		 \Big|\int_Y\int_\G \Phi_i(\g)\phi(x)\eta^1_{s,x}(d\g)\mu(dx)- \int_Y\int_\G\Phi_i(\g)\phi(x)\eta^2_{s,x}(d\g)\mu(dx)\Big|<\a\quad \forall i=1,\dots, N. 
		 \end{equation}
		 Now define the following entourage of $\mathcal{M}_\mu({\G})$ 
		\begin{equation}
		\begin{split} 
		V':=\bigcap_{j=1}^k\bigcap_{i=1}^N\Big\{&(\eeta^1,\eeta^2)\in\mathcal{M}_\mu(\G)\times \mathcal{M}_\mu(\G)\;:\; \\
		& \Big|\int_Y\int_\G \Phi_i(\g)\phi(x)\eta^1_{t_j,x}(d\g)\mu(dx)- \int_Y\int_\G\Phi_i(\g)\phi(x)\eta^2_{t_j,x}(d\g)\mu(dx)\Big|<\a/3\Big\}. 
		\end{split} 
		\end{equation}
In view of the above argument, $(\eeta^1,\eeta^2)\in V'$ implies that for every $t\in I$, we have $(\eeta_t^1,\eeta_t^2)\in \cap_{i=1}^NV_{\phi_i}$, whence $(\d(\eeta^1),\d(\eeta^2))\in \WW(I;\cap_{i=1}^NV_{\phi_i})$. 
As $\WW(I;\cap_{i=1}^NV_{\phi_i})$ was an arbitrary entourage in the fundamental system of $\mathcal{C}_c(\R;\mathcal{Z}_\mu)$ described in Section \ref{subsec_C_c(R;Z_mu)}, the map $\d$ is uniformly continuous. 
		
\end{proof}

\subsection{The space $\mathcal{X}_\mu$ of continuous disintegrations with respect to $\mu$}
Let us define the set 
\begin{equation}
\mathcal{X}_\mu:=\bigcup_{t\in\R}\Big\{\nnu:\supp\mu\longrightarrow \mathcal{P}(\G)\;:\;\nnu \text{ is continuous and $\nu_x(\{\g\in \G\;:\;\g(t)=x\})=1$} \quad\forall x\in \supp \mu\Big\}. 
\end{equation}
\subsubsection{A uniform structure on $\mathcal{X}_\mu$}\label{subsec_uniform_str_X_mu}
We endow $\mathcal{X}_\mu$ with the uniformity of compact convergence. 
The sets $\WW(K;V)$ where $V$ runs in the fundamental system of entourages of $\mathcal{P}(\G)$ described in Section \ref{subsec_measure_space_cont_paths} and $K$ runs in the compacts in $\supp\mu$ form a fundamental system of entourages of the uniformity on $\mathcal{X}_\mu$.  As $\mathcal{P}(\G)$ is a metrizable and separable uniform space by Lemma \ref{lem_separability}, the uniform space $\mathcal{X}_\mu$ is metrizable and separable in view of Theorem \ref{thm_separable}. 

\subsubsection{Relation between the uniformities $\mathcal{X}_\mu$ and $\mathcal{Z}_\mu$}

Let $\iota: \mathcal{X}_\mu\hookrightarrow \mathcal{Z}_\mu$ be the canonical injection mapping $\nnu\in \mathcal{X}_\mu$ to its equivalence class in $\mathcal{Z}_\mu$. 
The following lemma will be useful to establish uniform continuity of $\iota$. 
\begin{lemma}\label{lem_uniform_equicont}
	The family of of maps $\{\mathcal{I}_\phi\circ \iota : \mathcal{X}_\mu\longrightarrow \mathcal{P}(\G)\}_{\phi\in\mathscr{T}}$ is uniformly equicontinuous.
\end{lemma}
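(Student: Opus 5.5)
The plan is to reduce uniform equicontinuity of the whole family $\{\mathcal{I}_\phi\circ\iota\}_{\phi\in\mathscr{T}}$ to a single pointwise estimate in which $\phi$ appears only through its normalisation $\int_Y\phi\,d\mu=1$. Concretely, fix an entourage $V=\bigcap_{i=1}^N{\rm d}_{\Phi_i}^{-1}([0,\a])\cap\mathcal{P}(\G)\times\mathcal{P}(\G)$ in the fundamental system $\mathfrak{V}$ of Section \ref{subsec_radon_proba}. We must produce one entourage $\WW(K;V')$ of $\mathcal{X}_\mu$, as in Section \ref{subsec_uniform_str_X_mu}, such that $(\nnu^1,\nnu^2)\in\WW(K;V')$ implies $(\mathcal{I}_\phi(\iota\nnu^1),\mathcal{I}_\phi(\iota\nnu^2))\in V$ for \emph{every} $\phi\in\mathscr{T}$ at once. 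For each $i$, Fubini's theorem, $\phi\geq 0$ and $\int_Y\phi\,d\mu=1$ give
\begin{equation*}
\Big|\int_\G\Phi_i\,d\mathcal{I}_\phi(\iota\nnu^1)-\int_\G\Phi_i\,d\mathcal{I}_\phi(\iota\nnu^2)\Big|\leq\sup_{x\in\supp\mu}\Big|\int_\G\Phi_i\,d\nu^1_x-\int_\G\Phi_i\,d\nu^2_x\Big|.
\end{equation*}
The right-hand side does not involve $\phi$. Hence it suffices to find $K$ and $V'$ such that membership in $\WW(K;V')$ forces this supremum below $\a$ for $i=1,\dots,N$. Here the continuity of $x\mapsto\nu^j_x$, which is what distinguishes $\mathcal{X}_\mu$ from $\mathcal{Z}_\mu$, ensures that the supremum is a genuine pointwise quantity and not merely an essential supremum.

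On a compact set $K\subset\supp\mu$ the supremum is controlled directly. Take $V'=\bigcap_{i=1}^N{\rm d}_{\Phi_i}^{-1}([0,\a/2])$. Then $(\nnu^1,\nnu^2)\in\WW(K;V')$ gives $|\int\Phi_i\,d\nu^1_x-\int\Phi_i\,d\nu^2_x|\leq\a/2$ for all $x\in K$. The remaining task, and the main obstacle, is the set of points $x\in\supp\mu\setminus K$. The plan there is to exploit the concentration property: if $\nnu^j$ is concentrated at time $t_j$, then $\int\Phi_i\,d\nu^j_x=0$ unless $x\in e_{t_j}(\supp\Phi_i)$. By (Comp-$\G$), the set $\supp\Phi_i$ is compact, so each $e_{t}(\supp\Phi_i)$ is compact, and for $t$ in a compact interval the union $\bigcup_{t}e_t(\supp\Phi_i)$ is compact by joint continuity of $(t,\g)\mapsto\g(t)$.

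The difficulty is that $t_1,t_2$ range over all of $\R$, so $K$ cannot simply be taken to be such a union. What I would try first is to use the entourage itself to pin down the relevant times. Choose a point $x_0\in\supp\mu$ and a compact neighbourhood $K_0$ of $x_0$, and include in $V'$ an additional test function $\Phi_0\in C_c(\G)$ equal to $1$ on the compact set $e_{0}^{-1}(K_0)\cap\bigcup_i\supp\Phi_i$-related region. The idea is that closeness of $\nu^1_{x}$ and $\nu^2_x$ on $K$ against $\Phi_0$ forces the pieces of mass of $\nnu^1$ and $\nnu^2$ that can meet $\supp\Phi_i$ to be located over a common compact set of $x$'s, which we then absorb into $K$. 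If this fails, the fallback is a two-sided estimate outside $K$: bound each term $|\int\Phi_i\,d\nu^j_x|$ separately by $\|\Phi_i\|_{C^0}\,\nu^j_x(\supp\Phi_i)$, and choose $K$ through an exhaustion of $Y$, using (t-trans-$\G$) to move the concentration times of $\nnu^1$ and $\nnu^2$ back into a fixed window before comparing. This is the step I expect to require the most care. Once the supremum is shown to be below $\a$ on all of $\supp\mu$, the displayed inequality yields the uniform equicontinuity of $\{\mathcal{I}_\phi\circ\iota\}_{\phi\in\mathscr{T}}$.
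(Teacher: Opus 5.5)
The step you flag yourself, controlling $x\in\supp\mu\setminus K$, is a genuine gap. It cannot be closed, because uniform equicontinuity over all of $\mathscr{T}$ is false in this generality. Three facts combine against it: an entourage $\WW(K;V')$ of $\mathcal{X}_\mu$ only sees $\nu^j_x$ for $x\in K$; the concentration time of an element of $\mathcal{X}_\mu$ is unrestricted; and $\mathscr{T}$ contains densities whose mass sits arbitrarily far from $K$.

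Here is a concrete counterexample. Let $Y=\R$, let $\mu$ be Lebesgue measure, and let $\G$ be the $1$-Lipschitz paths. Take $\psi\in C_c(\R)$ with $0\le\psi\le1$, $\psi=1$ on $[-1,1]$ and $\supp\psi\subset[-2,2]$. Set $\Phi(\gamma):=\psi(\gamma(0))$ and $V:={\rm d}_\Phi^{-1}([0,1/2])$. Given $K\subset[-R,R]$ and any $V'$, take $S>2R+4$ and a continuous $\lambda:\R\to[0,1]$ equal to $1$ on $(-\infty,S/2]$ and to $0$ on $[S-2,+\infty)$. Put $\nu^1_x:=\delta_{\gamma^1_x}$ and $\nu^2_x:=\delta_{\gamma^2_x}$, with $\gamma^1_x(s):=x+\lambda(x)(s-S)$ and $\gamma^2_x(s):=x+(s-S)$. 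Both maps belong to $\mathcal{X}_\mu$ (concentrated at time $S$) and coincide on $K$, so $(\nnu^1,\nnu^2)\in\WW(K;V')$. By density of $\mathscr{T}$, choose $\phi\in\mathscr{T}$ with $\int_{[S-1,S+1]}\phi\,d\mu\ge1-\e$. Now $\gamma^2_x(0)=x-S$, whereas $\gamma^1_x(0)\in[-2,2]$ forces $x\in(S/2,S-2)$. Hence
\[
\int_\G\Phi\,d\mathcal{I}_\phi(\iota\nnu^2)\ge 1-\e,\qquad \int_\G\Phi\,d\mathcal{I}_\phi(\iota\nnu^1)\le \e,
\]
so the images leave $V$ as soon as $\e<1/4$.

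Neither of your remedies touches this. An extra test function $\Phi_0$ in $V'$ is still only tested at points of $K$, where $\nnu^1=\nnu^2$. The time shifts $b_h$ cannot be applied to a given pair without changing the integrals $\int_\G\Phi_i\,d\mathcal{I}_\phi$ you are comparing. The supremum over all of $\supp\mu$ to which you reduce is exactly what no entourage of compact convergence controls.

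The paper's own proof has the same hole. It bounds $\int_K\int_\G\Phi_i\,d\nu_x\,\phi(x)\,\mu(dx)$ for an arbitrary compact $K$ and then asserts the bound for $\mathcal{I}_\phi$ itself, which is only valid when $\supp\phi\cap\supp\mu\subset K$.

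What does hold is uniform continuity of each single map $\mathcal{I}_\phi\circ\iota$, and this is all that Lemma \ref{lem_canonic_inject_cont} uses, via Proposition \ref{prop_intial}. To get it, run your first estimate with the supremum taken over $\supp\phi\cap\supp\mu$ and choose $K:=\supp\phi\cap\supp\mu$.

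Your concentration observation also gives a correct equicontinuous version on the subspace of $\mathcal{X}_\mu$ whose concentration times lie in a fixed $[-T,T]$. There, take $K\supset\supp\mu\cap\bigcup_i\bigcup_{|t|\le T}e_t(\supp\Phi_i)$; this set is compact, and both integrands $\int_\G\Phi_i\,d\nu^j_x$ vanish off it.
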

\begin{proof}
	Let $\phi\in\mathscr{T}$, let $\nnu^0\in \mathcal{X}_\mu$ and let $V$ be an entourage in the fundamental system $\mathfrak{V}$ of $\mathcal{P}(\G)$ of the form
	\begin{equation}
	V=\bigcap_{i=1}^N {\rm d}_{\Phi_i}^{-1}([0,\a]). 
	\end{equation}
	Let $K$ be a compact subset of $\supp \mu$, and let  $\WW(K;V)$ be an entourage of $\mathcal{X}_\mu$ and recall that $(\nnu^0,\nnu^1)\in \WW(K;V)$ if and only if $(\nu_x^0,\nu_x^1)\in V$ for every $x\in K$. 
	
	To prove the lemma, it now suffices to show that for every $\phi\in\mathscr{T}$, and every $(\nnu^0,\nnu^1)\in \WW(K;V)$, we have $(\mathcal{I}_\phi(\nnu^0), \mathcal{I}_\phi(\nnu^1))\in V$, whenceforth it suffices to show that for every $i=1,\dots N$, we have 
	$(\mathcal{I}_\phi(\nnu^0),\mathcal{I}_\phi(\nnu^1))\in {\rm d}_{\Phi_i}^{-1}([0,\a]). $ 
	Let $i=1,\dots,N$, $\phi\in \mathscr{T}$ and $(\nnu^0,\nnu^1)\in \WW(K;V)$. We know that for every $x\in K$, we have 
	\begin{equation}
	\Big|\int_\G\Phi_i(\g)\nu^0_x(d\g)-\int_\G\Phi_i(\g) \nu^1_x(d\g) \Big|<\a,
	\end{equation}
	which implies 
	\begin{equation}
	\Big|\int_K\int_\G\Phi_i(\g)\nu^0_x(d\g)\phi(x)\mu(dx)-\int_K\int_\G\Phi_i(\g) \nu^1_x(d\g) \phi(x)\mu(dx)\Big|<\a.
	\end{equation}
	Thus by definition of $\mathcal{I}_\phi$, we have 
	\begin{equation}
	\Big|\int_\G \Phi_i(\g)\mathcal{I}_\phi(\nnu^0)(d\g)-\int_\G \Phi_i(\g)\mathcal{I}_\phi(\nnu^1)(d\g)\Big|<\a,
	\end{equation}
	which we can recast as 
	\begin{equation}
	(\mathcal{I}_\phi(\nnu^0),\mathcal{I}_\phi(\nnu^1))\in {\rm d}^{-1}_{\Phi_i}([0,\a]). 
	\end{equation}
	The thesis follows since $\phi$ was arbitrary in $\mathscr{T}$. 
\end{proof}

Let us now prove continuity of the canonical injection of $\mathcal{X}_\mu$ into $\mathcal{Z}_\mu$. 

\begin{lemma}\label{lem_canonic_inject_cont}
	The canonical injection $\iota:\mathcal{X}_\mu\hookrightarrow \mathcal{Z}_\mu$ is uniformly continuous. 
\end{lemma}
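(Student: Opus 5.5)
The plan is to combine Proposition \ref{prop_intial} with Lemma \ref{lem_uniform_equicont}. By Proposition \ref{prop_intial}, $\mathcal{Z}_\mu$ carries the initial uniformity with respect to the family $\{\mathcal{I}_\phi\}_{\phi\in\mathscr{T}}$. Hence a map $h$ from a uniform space into $\mathcal{Z}_\mu$ is uniformly continuous if and only if each composite $\mathcal{I}_\phi\circ h$ is uniformly continuous. Applying this with $h=\iota$, it suffices to show that for every $\phi\in\mathscr{T}$ the map $\mathcal{I}_\phi\circ\iota:\mathcal{X}_\mu\to\mathcal{P}(\G)$ is uniformly continuous. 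Lemma \ref{lem_uniform_equicont} already gives more: the whole family is uniformly equicontinuous, so each member is uniformly continuous.

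Concretely, I would argue at the level of the fundamental systems of entourages. Take a basic entourage $\bigcap_{i=1}^N V_{\phi_i}$ of $\mathcal{Z}_\mu$, as in Section \ref{subsec_uniform_Z_mu}, with $V\in\mathfrak{V}$. I want an entourage $\WW(K;V')$ of $\mathcal{X}_\mu$ such that $(\nnu^0,\nnu^1)\in\WW(K;V')$ implies $(\mathcal{I}_{\phi_i}(\iota\nnu^0),\mathcal{I}_{\phi_i}(\iota\nnu^1))\in V$ for every $i$. Lemma \ref{lem_uniform_equicont} supplies, for each $i$, an entourage $\WW(K_i;V_i')$ that works for $\phi_i$. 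Intersecting finitely many entourages gives another entourage, and $\WW(K;V')\subset\bigcap_i\WW(K_i;V'_i)$ when $K=\bigcup_i K_i$ and $V'=\bigcap_i V'_i$. So this $\WW(K;V')$ works.

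The main obstacle is whether the proof of Lemma \ref{lem_uniform_equicont} genuinely controls the integral over all of $Y$ and not only over $K$. The point to check is that $\int_Y\int_\G\Phi(\g)\nu_x(d\g)\phi(x)\mu(dx)$ only involves $x\in\supp\phi\cap\supp\mu$. I would therefore choose $K$ to be the compact set $\supp\phi\cap\supp\mu$ (intersected over $i$ if several $\phi_i$ occur). Then closeness of $\nu^0_x$ and $\nu^1_x$ on $K$ integrates against the probability density $\phi\,\mu$ to give the bound on all of $Y$. For the pseudometric ${\rm d}_{\Phi}$ this yields
\begin{equation*}
{\rm d}_{\Phi}\big(\mathcal{I}_\phi(\nnu^0),\mathcal{I}_\phi(\nnu^1)\big)\leq \int_K {\rm d}_{\Phi}(\nu^0_x,\nu^1_x)\,\phi(x)\mu(dx)\leq \a .
\end{equation*}
This inequality uses that $\int_Y\phi\,d\mu=1$ and $\phi\geq0$.

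With that choice of $K$ the equicontinuity estimate is legitimate. The lemma then follows from the initial-uniformity criterion of Proposition \ref{prop_intial}.
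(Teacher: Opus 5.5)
Your proposal is correct and follows the paper's route: you reduce, via Proposition \ref{prop_intial}, to uniform continuity of each $\mathcal{I}_\phi\circ\iota$, and you take $K\supseteq\supp\phi\cap\supp\mu$, which is what justifies passing from the integral over $K$ to the integral over $Y$. The paper's proof of Lemma \ref{lem_uniform_equicont} glosses over this, since it allows an arbitrary $K$. Two small fixes: with several $\phi_i$ you need the \emph{union} of the sets $\supp\phi_i\cap\supp\mu$, as in your own $K=\bigcup_i K_i$, not their intersection. Also, because $K$ must depend on $\phi$, you only obtain uniform continuity of each $\mathcal{I}_\phi\circ\iota$ separately; that is all that is needed here, so do not invoke uniform equicontinuity of the whole family over $\mathscr{T}$, which can fail when $\supp\mu$ is non-compact.
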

\begin{proof}
	Recall that the uniformity on $\mathcal{Z}_\mu$ is initial with respect to $\mathcal{I}_\phi$ where $\phi$ runs in $\mathscr{T}$. 
	Therefore, by Proposition \ref{prop_intial}, the canonical injection $\iota:\mathcal{X}_\mu\hookrightarrow \mathcal{Z}_\mu$ is uniformly continuous, if and only $\mathcal{I}_{\phi}\circ \iota$ is uniformly continuous for each $\phi\in \mathscr{T}$, and by Lemma \ref{lem_uniform_equicont}, the latter holds, which shows that $\iota$ is uniformly continuous. 
\end{proof} 

\subsubsection{The uniformity on $\mathcal{C}_c(\R;\mathcal{X}_\mu)$}\label{subsec_uniform_C_c(R;X_mu)}
We endow $\mathcal{C}_c(\R;\mathcal{X}_\mu)$ with the uniformity of compact convergence. The sets $\WW(K;V)$ where $V$ runs in the fundamental system of entourages of $\mathcal{X}_\mu$ described in Section \ref{subsec_uniform_str_X_mu} and $K$ runs in the compacts in $\supp\mu$ form a fundamental system of entourages of the uniformity on $\mathcal{X}_\mu$. As $\mathcal{X}_\mu$ is metrizable and separable, in view of Theorem \ref{thm_separable}, the uniformity on $\mathcal{C}_c(\R;\mathcal{X}_\mu)$ is metrizable and separable.
\subsection{Propagating invariance under the Markov opertor}
Let $t\in \R$ and $x\in Y$. 
Recall that $r^I$ is the restriction map from $\mathcal{M}(\G)$ to $\mathcal{M}(\G\lfloor_I)$ for every interval $I\subset \R$.
We define $\G_{t,x}:=\{\g\in \G\;:\;\g(t)=x\}$ which is endowed with the induced uniform structure. By a slight abuse of notation, the induced restriction map from $\G_{t,x}$ to $\G_{t,x}\lfloor_I$ is also designated by $r^I$, and for every $\nu\in \mathcal{P}(\G_{t,x})$, we write $\nu^I:=(r^I)_\#\nu$. We then define the Markov opertor $M_{t,x}$ from $\mathcal{P}(\G_{t,x})$ to $\mathcal{P}(\G_{t,x})$ by 
$M_{t,x}(\nu)=\nu^{(-\infty,t]}\otimes_{(t,x)}\nu^{[t,+\infty)}$ where we recall that, in the sense of Proposition \ref{prop_tensor}, the tensor product here is through the map $f_t:\G_{t,x}\longrightarrow\G_{t,x}\lfloor_{(-\infty,t]}\times\G_{t,x}\lfloor_{[t,+\infty)}$ given by $f_t(\g)=(r^{(-\infty,t]}\g, r^{[t,+\infty)}\g)$, which is an isomorphism of uniform spaces. 

\begin{lemma}\label{lem_markov_op_cont}
	Let $(\nu_n)_{n\in\N}$ be a sequence in $\mathcal{P}(\G_{t,x})$ and let $\nu\in \mathcal{P}(\G_{t,x})$. Then $M_{t,x}\nu_n$ converges vaguely to $M_{t,x}\nu$ as $n\to+\infty$, if and only if $\nu_n$ converges vaguely to $\nu$ as $n\to+\infty$. 
\end{lemma}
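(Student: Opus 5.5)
The plan is to reduce everything to weak convergence on a compact space and then use continuity of restriction maps together with density of tensor-type test functions. First observe that $\G_{t,x}=e_t^{-1}(\{x\})$ is a closed subset of $e_t^{-1}(K)$ for any compact $K\ni x$. By (Comp-$\G$) it is therefore compact, and so are its continuous images $\G_{t,x}\lfloor_{(-\infty,t]}$ and $\G_{t,x}\lfloor_{[t,+\infty)}$ under $r^{(-\infty,t]}$ and $r^{[t,+\infty)}$; these images are Hausdorff because they are subsets of the spaces of continuous paths on half-lines. Consequently $\mathbb{1}\in C_c$ on each of these spaces, and vague convergence of probability measures there coincides with convergence against every continuous function. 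As in Proposition \ref{prop_comp_measure}, mass is preserved in the limit.

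For the direction ``$\nu_n\to\nu$ implies $M_{t,x}\nu_n\to M_{t,x}\nu$'', I would proceed in four steps.
\begin{enumerate}
\item The restriction maps $r^{(-\infty,t]}$ and $r^{[t,+\infty)}$ are continuous. Hence $\nu_n^{(-\infty,t]}\to\nu^{(-\infty,t]}$ and $\nu_n^{[t,+\infty)}\to\nu^{[t,+\infty)}$ vaguely.
\item On the compact product $\G_{t,x}\lfloor_{(-\infty,t]}\times\G_{t,x}\lfloor_{[t,+\infty)}$, finite sums $\sum_k a_k(\g_1)b_k(\g_2)$ of continuous functions are uniformly dense in the continuous functions, by Stone--Weierstrass. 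For such test functions, convergence of $\int a\,b\,d(\nu_n^{(-\infty,t]}\times\nu_n^{[t,+\infty)})$ follows from Fubini and the previous step. A $3\varepsilon$ argument, using that all measures are probabilities, then yields $\nu_n^{(-\infty,t]}\times\nu_n^{[t,+\infty)}\to\nu^{(-\infty,t]}\times\nu^{[t,+\infty)}$.
\item By Proposition \ref{prop_tensor}, $M_{t,x}\nu_n=(f_t^{-1})_\#\big(\nu_n^{(-\infty,t]}\times\nu_n^{[t,+\infty)}\big)$, since $f_t$ is a bijection onto the product (concatenation at $(t,x)$).
\item The map $f_t$ is an isomorphism of uniform spaces, so $f_t^{-1}$ is continuous and pushforward by it preserves vague convergence on compacts. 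This concludes this direction.
\end{enumerate}

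For the converse, the key remark is that $M_{t,x}$ preserves the two marginals: $(M_{t,x}\nu)^{(-\infty,t]}=\nu^{(-\infty,t]}$ and $(M_{t,x}\nu)^{[t,+\infty)}=\nu^{[t,+\infty)}$, by the characterising identity \eqref{eqn_prod} with one of the sets taken to be the whole space. Hence $M_{t,x}\nu_n\to M_{t,x}\nu$ implies, by continuity of the restrictions, that both marginals of $\nu_n$ converge to those of $\nu$.

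This is where I expect the main obstacle, and I suspect the statement needs an extra hypothesis. Marginal convergence does not determine the joint law, and $M_{t,x}$ is not injective. If $\nu\neq M_{t,x}\nu$ and we take the constant sequence $\nu_n:=M_{t,x}\nu$, then $M_{t,x}\nu_n=M_{t,x}\nu$ for all $n$ (since $M_{t,x}$ is idempotent by the marginal remark), yet $\nu_n\not\to\nu$. So I would prove the converse under the natural additional assumption that $\nu_n$ and $\nu$ are Markovianised at $(t,x)$, i.e. $M_{t,x}\nu_n=\nu_n$ and $M_{t,x}\nu=\nu$. This is presumably the case used later, for elements with the strong Markov property. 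Under this assumption the converse is immediate: $\nu_n=M_{t,x}\nu_n\to M_{t,x}\nu=\nu$.

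Alternatively, one can obtain the converse in the form ``$\nu_n^{(-\infty,t]}\to\nu^{(-\infty,t]}$ and $\nu_n^{[t,+\infty)}\to\nu^{[t,+\infty)}$'', which is equivalent to $M_{t,x}\nu_n\to M_{t,x}\nu$ by the two directions above.
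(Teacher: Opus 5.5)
Your argument for ``$\nu_n\to\nu$ implies $M_{t,x}\nu_n\to M_{t,x}\nu$'' is correct. Your objection to the converse is also correct: that implication is false in general, and your constant sequence $\nu_n:=M_{t,x}\nu$ with $M_{t,x}\nu\neq\nu$ is a valid counterexample. Such $\nu$ already exist in the paper's example of $L$-Lipschitz paths in $\R^d$. Take $\nu=\frac12(\delta_{\gamma_1}+\delta_{\gamma_2})$ with $\gamma_{1,2}(s)=x\pm L(s-t)v$ and $|v|=1$. Then $M_{t,x}\nu$ gives mass $\frac14$ to each of $\gamma_1$, $\gamma_2$ and the two concatenations $x\pm L|s-t|v$, so $M_{t,x}\nu\neq\nu$.

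The paper argues differently. Since $(f_t)_\#$ is a homeomorphism of $\mathcal{P}(\G_{t,x})$ onto $\mathcal{P}(\G_{t,x}\lfloor_{(-\infty,t]}\times\G_{t,x}\lfloor_{[t,+\infty)})$, it concludes that $\nu_n\to\nu$ holds if and only if $\nu_n^{(-\infty,t]}\times\nu_n^{[t,+\infty)}\to\nu^{(-\infty,t]}\times\nu^{[t,+\infty)}$. It then transfers this to $M_{t,x}\nu_n=(f_t)^\#(\nu_n^{(-\infty,t]}\times\nu_n^{[t,+\infty)})$.

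The error is that this homeomorphism sends $\nu_n$ to the joint law $(f_t)_\#\nu_n$ of past and future. That joint law equals the product of its marginals only when $M_{t,x}\nu_n=\nu_n$, and this is exactly what your counterexample exploits. The second link of the paper's chain (the products converge if and only if $M_{t,x}\nu_n\to M_{t,x}\nu$) is fine. Even the forward half of the first link, however, is justified there only through the same misidentification.

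Your route supplies exactly the missing step:
\begin{itemize}
\item the marginals converge, by continuity of $r^{(-\infty,t]}$ and $r^{[t,+\infty)}$;
\item the product measures converge, by Stone--Weierstrass on the compact product;
\item pushing forward by the continuous map $f_t^{-1}$ then gives $M_{t,x}\nu_n\to M_{t,x}\nu$.
\end{itemize}

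The only later use of the lemma is step 3.bis of Lemma \ref{lem_markov_limit}, applied to $(b_{t-t_n})_\#\nu_n\to\nu$, and it needs only the forward direction. So your version is what the paper actually requires, together with your corrected converse: $M_{t,x}\nu_n\to M_{t,x}\nu$ is equivalent to convergence of both marginals.
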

\begin{proof}
	Firstly by Proposition \ref{prop_comp_measure} and Section \ref{subsec_space_G}, we have that $\mathcal{P}(\G_{t,x})$ is vaguely compact so it suffices to check vague convergence in $\mathcal{P}(\G_{t,x})$. 
	From the characterisation of the tensor product, for every $\bar\nu\in \mathcal{P}(\G_{t,x})$ we have 
	\begin{equation}
	(f_t)_\#M_{t,x}\bar\nu=\bar\nu^{(-\infty,t]}\times \bar\nu^{[t,+\infty)},
	\end{equation}
	which implies that
		\begin{equation}
	M_{t,x}\bar\nu=(f_t)^\#\bar\nu^{(-\infty,t]}\times \bar\nu^{[t,+\infty)}. 
	\end{equation}
	Let $\Phi\in C_c(\G_{t,x})$ and notice that 
	\begin{equation}
	\int_{\G_{t,x}} \Phi(\g)M_{t,x}\nu_n(d\g)= \int_{\G^{[t,+\infty)}_{t,x}}\int_{\G^{(-\infty,t]}_{t,x}}\Phi(f_t^{-1}(\g_1,\g_2))\nu_n^{(-\infty,t]}(d\g_1)\nu_n^{[t,+\infty)}(d\g_2). 
	\end{equation}
	Since $f_t=r^{(-\infty,t]}\times r^{[t,+\infty)}$ is an isomorphism of uniform spaces, the pushforward map $(f_t)_\#$ is an homeomorphism of $\mathcal{P}(\G_{t,x})$ onto $\mathcal{P}(\G^{(-\infty,t]}_{t,x}\times \G^{[t,+\infty)}_{t,x})$, whereby $\nu_n$ converges vaguely to $\nu$, if and only if $\nu^{(-\infty,t]}_n\times \nu_n^{[t,+\infty)}$ converges vaguely to $\nu^{(-\infty,t]}\times \nu^{[t,+\infty)}$, whence by Fubini's theorem
	\begin{equation}
	\begin{split} 
	\lim_{n\to+\infty}	\int_{\G_{t,x}} \Phi(\g)M_{t,x}\nu_n(d\g)
	&=\lim_{n\to+\infty} \int_{\G_{t,x}\lfloor_{[t,+\infty)}}\int_{\G_{t,x}\lfloor_{(-\infty,t]}}\Phi(f_t^{-1}(\g_1,\g_2))\nu_n^{(-\infty,t]}(d\g_1)\nu_n^{[t,+\infty)}(d\g_2)\\
	&= \int_{\G_{t,x}\lfloor_{[t,+\infty)}}\int_{\G_{t,x}\lfloor_{(-\infty,t]}}\Phi(f_t^{-1}(\g_1,\g_2))\nu^{(-\infty,t]}(d\g_1)\nu^{[t,+\infty)}(d\g_2)\\
	&= \int_{\G_{t,x}}\Phi(\g) M_{t,x}\nu(d\g). 
	\end{split} 
	\end{equation}
	Since $\Phi$ was arbitrary, it follows that $\nu_n^{(-\infty,t]}\times \nu_n^{[t,+\infty)}$ converges vaguely to $\nu^{(-\infty,t]}\times \nu^{[t,+\infty)}$ as $n\to+\infty$, if and only if $M_{t,x}\nu_n$ converges vaguely to $M_{t,x}\nu$ as $n\to+\infty$ and the thesis follows. 
	\end{proof} 

For every $h\in \R$ and every interval $I\subset\R$, denote $I-h:=\{t-h\;:\;t\in I\}$, and define the maps $b_h$ from $\G\lfloor_I$ to $\G\lfloor_{I-h}$ by $b_h(\g(\cdot))=\g(\cdot-h). $ Note that in view of Section \ref{subsec_space_G}, the maps $b_h$ are well-defined. By a slight abuse of notation, the domain and codomain are implicitly designated by the symbol $b_h$, and for every $s\in I$, the induced map from $\G_{s,x}\lfloor_I$ to $\G_{s-h,x}\lfloor_{I-h}$ is also designated by the symbol $b_h$. 
\begin{itemize}
	\item[(Obs-$\Phi$)] For every $\Phi\in C_c(\G)$, the set $\{\Phi\circ b_h\}_{h\in (-1,1)}$ is precompact in $C_c(\G)$. 
\end{itemize}
Let us now state the following lemma. 
\begin{lemma}\label{lem_markov_limit}
Let $(t_n)_{n\in\N}$ be a sequence in $\R$, and let $(\nu_n)_{n\in\N}$ be a sequence in $\mathcal{P}(\G)$ concentrated on $\G_{t_n,x}$ for every $n\in\N$. Assume that for some $t\in \R$ and some $\nu\in \mathcal{P}(\G)$, we have $t_n\to t$ as $n\to+\infty$ and $\nu_n$ converges vaguely to $ \nu$ as $n\to+\infty$, and that for every $n\in\N$, we have $M_{t_n,x}\nu_{n}=\nu_{n}$. Then $\nu$ is invariant under $M_{t,x}$, i.e. $M_{t,x}\nu=\nu$. 
\end{lemma}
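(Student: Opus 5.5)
The plan is to reduce everything to a fixed time by time-shifting, and then pass to the limit using the product characterisation of $M_{t,x}$. Set $h_n:=t_n-t\to 0$ and $\tilde\nu_n:=(b_{h_n})_\#\nu_n$. The map $b_{h_n}$ sends $\G_{t_n,x}$ onto $\G_{t,x}$, so $\tilde\nu_n$ is concentrated on $\G_{t,x}$. The shift commutes with restriction to half-lines, since $b_h$ maps $\G\lfloor_{(-\infty,t_n]}$ to $\G\lfloor_{(-\infty,t]}$ and similarly for the future parts. Combining this with Lemma \ref{lem_tensor_proj}, with $g=b_{h_n}$ and $g_1,g_2$ the induced shifts on past and future, we get
\[
(b_{h_n})_\#\big(\nu_n^{(-\infty,t_n]}\otimes_{(t_n,x)}\nu_n^{[t_n,+\infty)}\big)=\tilde\nu_n^{(-\infty,t]}\otimes_{(t,x)}\tilde\nu_n^{[t,+\infty)}.
\]
Hence $M_{t,x}\tilde\nu_n=(b_{h_n})_\#M_{t_n,x}\nu_n=\tilde\nu_n$, so each $\tilde\nu_n$ is $M_{t,x}$-invariant.

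The second step is to show that $\tilde\nu_n\to\nu$ vaguely. For $\Phi\in C_c(\G)$ we write
\[
\int\Phi\, d\tilde\nu_n-\int\Phi\, d\nu=\int(\Phi\circ b_{h_n}-\Phi)\, d\nu_n+\Big(\int\Phi\, d\nu_n-\int\Phi\, d\nu\Big).
\]
The second term tends to $0$ by hypothesis. For the first term, $b_h\to\mathrm{id}$ uniformly on compacts of $\G$, because paths are uniformly continuous on compact time intervals. Using (Obs-$\Phi$), the functions $\Phi\circ b_{h_n}$ stay supported in a fixed compact set and converge uniformly to $\Phi$. Since the $\nu_n$ are probability measures, the first term is therefore bounded by $\|\Phi\circ b_{h_n}-\Phi\|_{C^0}\to0$. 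It follows that $\nu$ is concentrated on $\G_{t,x}$, because $\G_{t,x}$ is closed, $\tilde\nu_n(\G_{t,x})=1$, and mass is not lost: the $\tilde\nu_n$ all live in the compact set $e_t^{-1}(\{x\})$, which is compact by (Comp-$\G$).

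The third step is to conclude. All the $\tilde\nu_n$ and $\nu$ lie in $\mathcal{P}(\G_{t,x})$. Lemma \ref{lem_markov_op_cont} gives $M_{t,x}\tilde\nu_n\to M_{t,x}\nu$ vaguely, and $M_{t,x}\tilde\nu_n=\tilde\nu_n\to\nu$. By uniqueness of vague limits, $M_{t,x}\nu=\nu$.

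I expect the main obstacle to be the first step: making the compatibility of $b_h$ with the tensor product rigorous through Lemma \ref{lem_tensor_proj}. This requires checking its hypotheses (\ref{eqn_assump}) and (\ref{eqn_assump_2}) with $S_i=\{\g(t_n)\neq x\}$, and checking that $b_h$ is a bijective bimeasurable map between the relevant restricted path spaces. If that becomes cumbersome, the alternative is to verify the identity directly. Since $b_h$ is a homeomorphism, the pushforward of a product of measures on past and future is the product of the pushforwards. The characterising identity (\ref{eqn_prod}) of Proposition \ref{prop_tensor}, together with its uniqueness clause, then gives the equality.
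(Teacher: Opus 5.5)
Your proposal is correct and follows the paper's own route. It uses three ingredients: shift-equivariance $(b_h)_\#M_{s,x}\nu=M_{s+h,x}(b_h)_\#\nu$ from the uniqueness clause of Proposition \ref{prop_tensor}; vague convergence of the time-shifted measures to $\nu$ via (Obs-$\Phi$); and the continuity implication $\nu_n\to\nu\Rightarrow M_{t,x}\nu_n\to M_{t,x}\nu$ of Lemma \ref{lem_markov_op_cont}.

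Your ending, which applies $M_{t,x}$ directly to the already invariant $\tilde\nu_n$, is a bit cleaner than the paper's step of shifting back to $M_{t_n,x}\nu_n$. You also check that $\nu$ is concentrated on $\G_{t,x}$, which the paper leaves implicit. One small sign correction: with $b_h\g=\g(\cdot-h)$, the shift carrying $\G_{t_n,x}$ onto $\G_{t,x}$ is $b_{t-t_n}$, as in the paper, not $b_{t_n-t}$.
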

\begin{proof}

1.
We claim that $(b_h)_\#M_{s,x}\nu =M_{s+h,x} (b_h)_\#\nu$. By the characterising property of the tensor product in Proposition \ref{prop_tensor}, we have that 
\begin{equation}
(f_{s+h})_\#M_{s+h,x} (b_h)_\#\nu = ((b_h)_\#\nu )^{(-\infty,s+h]}\times ((b_h)_\#\nu )^{[s+h,+\infty)}. 
\end{equation}
	Also we have $f_{s+h}\circ b_h=(b_h\times b_h)\circ f_s$, therefore
	\begin{equation}
	\begin{split} 
(	f_{s+h})_\#(b_h)_\#M_{s,x}\nu=(b_h\times b_h)_\# (f_s)_\#M_{s,x}\nu&=(b_h\times b_h)_\#\nu^{(-\infty,s]}\times \nu^{[s,+\infty)}\\
&=((b_h)_\#\nu)^{(-\infty,s+h]}\times ((b_h)_\#\nu)^{[s+h,+\infty)}. 
\end{split} 
	\end{equation}
	The claim follows. 
	
	\bigskip 
	
	2. We claim that for every $h\in \R$, the pushforward map $(b_h)_\#$ from $\mathcal{P}(\G)$ to $\mathcal{P}(\G)$ is sequential continuous. Let $(\bar\nu_n)_{n\in\N}$ be a sequence in $\mathcal{P}(\G)$ such that $\bar\nu_n$ converges vaguely to $\bar\nu$ in $\mathcal{P}(\G)$, and let $\Phi \in C_c(\G)$. Observe that $\Phi\circ b_h\in C_c(\G)$, whence
	\begin{equation}
	\lim_{n\to+\infty}\int_\G \Phi(b_h\g)\bar\nu_n(d\g)=\int_\G \Phi(b_h\g)\bar\nu(d\g). 
	\end{equation}
	This proves the claim as $\Phi$ was arbitrary. 
	\bigskip 
	
	2.bis. Let $\bar\nu\in \mathcal{P}(\G)$. We claim that $(b_h)_\#\bar\nu$ converges vaguely to $\bar\nu$ as $h\to 0$. Let $\Phi \in C_c(\G)$ and notice that $\Phi\circ b_h$ converges to $\Phi$ in $C_c(\G)$ as $h\to 0$ by (Obs-$\Phi$).
	Therefore 
	\begin{equation}
	\lim_{h\to 0} \int_\G \Phi(b_h \g)\bar\nu(d\g)= \int_\G \Phi( \g)\bar\nu(d\g),
	\end{equation}
	which proves the claim since $\Phi$ was arbitrary. 
	
	\bigskip 
	
	3. We claim that $(b_{t-t_n})_\#\nu_n$ converges vaguely to $\nu$.
	Observe that for every $\g\in \G$, we have that $b_{t-t_n}\g$ converges to $\g$ in $\G$ as $n\to+\infty$. 
	 Let $\Phi \in C_c(\G)$, and observe that by (Obs-$\Phi$), we have that  $\Phi(b_{t-t_n}\g)$ converges to $\Phi(\g)$ in $C_c(\G)$ as $n\to+\infty$. Since $\nu_n$ converges vaguely to $\nu$ as $n\to+\infty, $ we have
	\begin{equation}
\lim_{n\to+\infty}	\int_\G\Phi (b_{t-t_n}\g)\nu_n(d\g)=\int_\G \Phi(\g)\nu(d\g). 
	\end{equation}
	As $\Phi$ was arbitrary, the claim follows. 
	\bigskip

	3.bis. In view of Lemma \ref{lem_markov_op_cont}, we have that $M_t(b_{t-t_n})_\#\nu_n$ converges vaguely to $M_t\nu$ as $n\to+\infty$, and by 1. of this proof, $(b_{t-t_n})_\#M_{t_n}\nu_n$ converges vaguely to $M_t\nu$ as $n\to+\infty$. By 2.bis. of this proof, we have that $(b_{t_n-t})_\#M_t\nu$ converges vaguely to $M_t\nu$ as $n\to+\infty$. Also by 2. of this proof, $(b_{h})_\#(b_{t-t_n})_\# M_{t_n}\nu_n$ converges vaguely to $(b_{h})_\#M_t\nu$ as $n\to+\infty$. Therefore, setting $h=t_n-t$, we find by 2.bis. of this proof that $M_{t_n}\nu_n$ converges vaguely to $M_t\nu$ as $n\to+\infty$, whence $M_t\nu=\lim_{n\to+\infty}M_{t_n}\nu_n=\lim_{n\to+\infty}\nu_n =\nu.$ This concludes the proof. 

\end{proof}

\subsection{Proof of Theorem \ref{thm_main_strong_markov}}
By a slight abuse of notation, for every $\eeta\in \mathcal{M}_\mu(\G)$ every $t\in \R$, whenever there exists a disintegration of $\eeta$ with respect to $e_t$ and $\mu$ which lies in $\mathcal{X}_\mu$, we will designate this disintegration as well as the corresponding equivalence class in $\mathcal{Z}_\mu$ by $\eeta_t$. 
Define the set 
\begin{equation} 
\mathcal{Y}_\mu:=\Big\{\eeta\in \mathcal{M}_\mu(\G)\;:\; \eeta_t\in \mathcal{X}_\mu \text{ for every }t\in \R\text{ and } \R\ni t\longmapsto \eeta_t\in \mathcal{X}_\mu \text{ is continuous}\Big\},
\end{equation} 
which we endow with the induced uniformity. 
We then define the disintegration map $\d':\mathcal{Y}_\mu\longrightarrow \mathcal{C}_c(\R;\mathcal{X}_\mu)$ by $\d'(\eeta):\R\ni t\longmapsto \eeta_t\in \mathcal{X}_\mu$, which is clearly well-defined. 

We can now define elements in $\mathcal{M}_\mu(\G)$, which satisfy the strong Markov property.
\begin{definition}\label{def_strong_markov}
	We shall say that $\eeta\in \mathcal{Y}_\mu$ satisfies the strong Markov property, if for every $(t,x)\in \R\times \supp\mu$, we have 
\begin{equation}
\eta_{t,x}=\eta_{t,x}^{(-\infty,t]}\otimes_{(t,x)}\eta_{t,x}^{[t,+\infty)}. 
\end{equation}
	\end{definition}


\begin{lemma}\label{lem_d'}
	The map $\d':\mathcal{Y}_\mu\longrightarrow \mathcal{C}_c(\R;\mathcal{X}_\mu)$ is uniformly continuous and injective. 
\end{lemma}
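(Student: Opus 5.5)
The plan is to prove the two assertions separately. Injectivity is essentially a consequence of the disintegration formula. Uniform continuity is the real work. I would reduce it, through the compactness of $\mathcal{M}_\mu(\G)$ (Lemma \ref{lem_mu_invariant_space}), to a pointwise continuity statement at limit points of $\mathcal{Y}_\mu$.

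\emph{Injectivity.} Let $\eeta^1,\eeta^2\in\mathcal{Y}_\mu$ with $\d'(\eeta^1)=\d'(\eeta^2)$. Fix any $t\in\R$. Then $\eta^1_{t,x}=\eta^2_{t,x}$ for every $x\in\supp\mu$. Since $\mu$ is concentrated on $\supp\mu$, the decomposition property of Section \ref{subsec_disint} gives
\[
\eeta^1=\int_{\supp\mu}\eta^1_{t,x}\,\mu(dx)=\int_{\supp\mu}\eta^2_{t,x}\,\mu(dx)=\eeta^2 .
\]
Alternatively, $\d'(\eeta)$ determines $\d(\eeta)$ through the canonical injection $\iota$ of Lemma \ref{lem_canonic_inject_cont}. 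The integrals $\mathcal{I}_\phi$, $\phi\in\mathscr{T}$, then recover $\int_\G\Phi\,d\eeta$ for every $\Phi\in C_c(\G)$. To see this, cut $\Phi$ off via a partition of unity on $e_t^{-1}(K)$ with $K\supset e_t(\supp\Phi)$, and approximate the corresponding weights by elements of $\mathscr{T}$.

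\emph{Uniform continuity.} The fundamental system of entourages of $\mathcal{C}_c(\R;\mathcal{X}_\mu)$ is given by the sets $\WW(I;\WW(K;V))$. Here $I\subset\R$ and $K\subset\supp\mu$ are compact, and $V=\bigcap_{i=1}^N{\rm d}^{-1}_{\Phi_i}([0,\a])$ is an entourage of $\mathcal{P}(\G)$. For such an entourage, we must find a vague entourage $U$ of $\mathcal{M}_\mu(\G)$ with the following property: if $(\eeta^1,\eeta^2)\in U$, then
\[
\Big|\int_\G\Phi_i\,d\eta^1_{t,x}-\int_\G\Phi_i\,d\eta^2_{t,x}\Big|\le\a
\]
for all $t\in I$, $x\in K$ and $i\le N$. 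I would argue by contradiction. If no such $U$ exists, metrizability of $\mathcal{M}_\mu(\G)$ (Lemma \ref{lem_mu_invariant_space}) gives sequences $\eeta^{1,n},\eeta^{2,n}\in\mathcal{Y}_\mu$ with ${\rm d}(\eeta^{1,n},\eeta^{2,n})\to0$, together with points $t_n\in I$, $x_n\in K$ and an index $i$, such that the displayed quantity exceeds $\a$. By compactness of $\mathcal{M}_\mu(\G)$, $I$ and $K$, we may pass to a subsequence along which both measure sequences converge vaguely to a common $\bar\eeta$, and moreover $t_n\to t$ and $x_n\to x$. The aim is then to show that $\eta^{j,n}_{t_n,x_n}\to\bar\eta_{t,x}$ vaguely for $j=1,2$, which contradicts the choice of the sequences.

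This convergence is proved by localising in $x$. Choose $\phi\in\mathscr{T}$ concentrated near $x$. The quantity $\mathcal{I}_\phi(\eeta^{j,n}_{t_n})$ converges to $\mathcal{I}_\phi(\bar\eeta_t)$. This follows from the uniform continuity of $\d$ (Lemma \ref{lem_disint_map}), combined with the equicontinuity in $t$ proved in step 1 of that lemma, which handles $t_n\to t$. On the other hand, $\mathcal{I}_\phi(\eeta^{j,n}_{t_n})$ is close to $\eta^{j,n}_{t_n,x_n}$, provided $x\mapsto\eta^{j,n}_{t_n,x}$ is continuous near $x$ \emph{uniformly in $n$}.

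\emph{The main obstacle.} There are two issues, and I expect them to be the core of the proof. First, one needs this equicontinuity of the family $\{x\mapsto\eta^{j,n}_{t_n,x}\}_n$. Second, one needs $\bar\eeta\in\mathcal{Y}_\mu$, so that $\bar\eta_{t,x}$ is defined at every point. Nothing in the definition of $\mathcal{Y}_\mu$ alone provides either. My first attempt would be to extract both from the precompactness of the sequences $(\d'(\eeta^{j,n}))_n$ in $\mathcal{C}_c(\R;\mathcal{X}_\mu)$. On a compact set the uniformity is unique (Theorem \ref{thm_unique_uniform_structure_compact}), and Ascoli's theorem then gives exactly the required equicontinuity in $(t,x)$. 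A limit point $\bar\d$ exists, and via $\iota$ and the uniform continuity of $\d$ it must satisfy $\iota\circ\bar\d=\d(\bar\eeta)$, so that $\bar\eeta\in\mathcal{Y}_\mu$ with $\d'(\bar\eeta)=\bar\d$. If precompactness cannot be obtained on the whole of $\mathcal{Y}_\mu$, I would prove the statement on such precompact pieces. These pieces are exactly what the Markov regularity assumption supplies in the proof of Theorem \ref{thm_main_strong_markov}.
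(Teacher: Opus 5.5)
Your injectivity argument is the same as the paper's: fix $t$, get $\eta^1_{t,x}=\eta^2_{t,x}$, integrate against $\mu$. It is fine.

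The uniform-continuity half does not close. The obstruction you isolate, equicontinuity of $x\mapsto\eta^{j,n}_{t_n,x}$ uniformly in $n$, is not a technicality: uniform continuity of $\d'$ on all of $\mathcal{Y}_\mu$ is false, so nothing built from the definitions can supply it. Here is a counterexample.
Take $Y=\R^2$, $\mu$ Lebesgue, and $\G$ the $1$-Lipschitz paths. Choose $g\in C^\infty_c((-1,1))$ with $0\le g\le 1$ and $g(0)=1$. For $n\ge 2$ let $X^n$ be the flow of the smooth divergence-free field $b_n(x)=g(n(|x|-1))\,x^\perp/|x|$. This flow is the identity off the annulus $A_n=\{x:1-1/n\le|x|\le 1+1/n\}$, and it moves $x_0=(1,0)$ along $\g_0(s)=(\cos s,\sin s)$.
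Put $\eeta^n=\int_{\R^2}\delta_{X^n(\cdot,x)}\,dx$ and $\eeta^0=\int_{\R^2}\delta_{\bar x}\,dx$, where $\bar x$ is the constant path at $x$.
Both lie in $\mathcal{Y}_\mu$: the disintegration at time $t$ is $x\mapsto\delta_{X^n(\cdot-t,x)}$ (resp. $\delta_{\bar x}$), which is jointly continuous in $(t,x)$.
Moreover $|\int\Phi\,d\eeta^n-\int\Phi\,d\eeta^0|\le 2\|\Phi\|_\infty|A_n|\to 0$, so $\eeta^n\to\eeta^0$ vaguely.
However, $\d'(\eeta^n)(0)(x_0)=\delta_{\g_0}$ for every $n$, which stays a fixed distance from $\d'(\eeta^0)(0)(x_0)=\delta_{\bar x_0}$. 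Hence $\d'$ is not even continuous into $\mathcal{C}_c(\R;\mathcal{X}_\mu)$.

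The paper's one-line proof has the corresponding defect. It writes $\d'=\d\circ\iota$, but $\d\circ\iota$ takes values in $\mathcal{C}_c(\R;\mathcal{Z}_\mu)$. The correct identity is $\iota^*\circ\d'=\d\lfloor_{\mathcal{Y}_\mu}$ (Lemma \ref{lem_iota*}). That identity gives uniform continuity only for the coarser $\mathcal{Z}_\mu$-uniformity, in which the $x$-dependence is seen only through the averages $\mathcal{I}_\phi$. The example above converges in that sense but not uniformly in $x$. This half of the lemma is not invoked later: Lemma \ref{lem_markov_all_times} needs only injectivity of $\iota^*$ plus the compactness supplied by Markov regularity.

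Your fallback of restricting to precompact pieces is the right salvage, and it can be done more directly than by your contradiction argument. Suppose $S\subset\mathcal{Y}_\mu$ is such that, for each compact $I$, all values $\d'(\eeta)(t)$ with $\eeta\in S$, $t\in I$ lie in a compact $K_I\subset\mathcal{X}_\mu$; Markov regularity gives exactly this for $\{M_F(\eeta)\}_F$. Then $\iota\lfloor_{K_I}$ is a continuous injection of a compact space into the Hausdorff space $\mathcal{Z}_\mu$. By Theorem \ref{thm_unique_uniform_structure_compact} it is a uniform isomorphism onto its image. Therefore $\d'\lfloor_S$ inherits uniform continuity from $\d$ (Lemma \ref{lem_disint_map}).
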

\begin{proof}
	Denote by $\iota:\mathcal{Y}_\mu \hookrightarrow \mathcal{M}_\mu(\G)$ the canonical injection. 
Observe that $\d'=\d\circ \iota$, whence uniform continuity follows from the uniform continuity of $\d$ and $\iota$. Let us prove injectivity. Let $\eeta^1,\eeta^2\in \mathcal{Y}_\mu$ and suppose that $(\d\circ\iota)(\eeta^1)=(\d\circ\iota)(\eeta^2)$. Then for some $t\in \R,$ we have 
	$\eeta_t^1=\eeta_t^2$, which implies that by essential uniqueness of the disintegration that $\eta^1_{t,x}=\eta^2_{t,x}$ for $\mu$-a.e. $x\in \supp\mu $. Therefore
	\begin{equation}
	\eeta^1=\int_Y\eta^1_{t,x}\mu(dx)=\int_Y\eta^2_{t,x}\mu(dx)=\eeta^2,
	\end{equation}
	whereby injectivity follows. 
\end{proof}
\begin{definition}\label{def_markov_regular}
	We shall say that $\eeta\in \mathcal{Y}_\mu$ is Markov regular, if for every compact $I$ in $\R$, there exists a compact $K$ in $\mathcal{X}_\mu$ such that $(M_F(\eeta))_t\in K$ for every finite subset $F$ of $\R$. 
\end{definition}
The following lemma shows that the image by $\d'$ of Markov regular elements of $\mathcal{Y}_\mu$ enjoy additional regularity. 
\begin{lemma}\label{lem_compact_valued} 
	Let $\eeta\in\mathcal{Y}_\mu$ be Markov regular, and let $I$ be compact in $\R$. Then there exists a compact $K\subset \mathcal{X}_\mu$ such that for every finite subset $F$ of $\R$, we have $\d'(M_F(\eeta ))\lfloor_I\in C(I;K)$. 
\end{lemma}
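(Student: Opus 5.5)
The plan is to read Lemma \ref{lem_compact_valued} almost directly off Definition \ref{def_markov_regular}. The definition is stated with a compact $I$ but its quantifier over $t$ is implicit, so we read it as intended in the introduction: for every compact $I\subset\R$ there is a compact $K\subset\mathcal{X}_\mu$ such that $(M_F(\eeta))_t\in K$ for every finite $F\subset\R$ and every $t\in I$. Fix $I$ and take this $K$. What must be checked is that for each finite $F$ the map $t\mapsto (M_F(\eeta))_t$, restricted to $I$, is a well-defined continuous map from $I$ into $K$, with $K$ carrying the topology induced from $\mathcal{X}_\mu$.

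The first step is to check that $M_F(\eeta)\in\mathcal{Y}_\mu$, so that $\d'(M_F(\eeta))$ makes sense. Membership $(M_F(\eeta))_t\in\mathcal{X}_\mu$ for $t\in I$ comes from Markov regularity; letting $I$ range over an exhaustion of $\R$ by compact intervals gives it for every $t\in\R$. Continuity of $t\mapsto (M_F(\eeta))_t$ into $\mathcal{X}_\mu$ is the delicate point. By Lemma \ref{lem_disint_map} we already know that $t\mapsto \iota((M_F(\eeta))_t)$ is continuous into $\mathcal{Z}_\mu$, with $\iota$ the canonical injection of Lemma \ref{lem_canonic_inject_cont}.

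The key step is to upgrade this continuity from $\mathcal{Z}_\mu$ to $\mathcal{X}_\mu$ on $I$, using compactness of $K$. By Lemma \ref{lem_canonic_inject_cont}, $\iota$ is continuous, and it is injective on $\mathcal{X}_\mu$. Injectivity holds because two continuous families agreeing $\mu$-a.e. agree on $\supp\mu$, since every nonempty relatively open subset of $\supp\mu$ has positive $\mu$-measure. The uniformity of $\mathcal{Z}_\mu$ is initial for the $\mathcal{I}_\phi$, and step 2.bis of Lemma \ref{lem_disint_map} shows these separate points, so $\mathcal{Z}_\mu$ is Hausdorff. Hence $\iota\lfloor_K:K\to\iota(K)$ is a continuous bijection from a compact space onto a Hausdorff space, and therefore a homeomorphism. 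It follows that
\[
t\longmapsto (M_F(\eeta))_t=(\iota\lfloor_K)^{-1}\big(\iota((M_F(\eeta))_t)\big)
\]
is continuous from $I$ into $K$. Running the same argument on each compact interval gives continuity on all of $\R$, so $M_F(\eeta)\in\mathcal{Y}_\mu$ and $\d'(M_F(\eeta))\lfloor_I\in C(I;K)$.

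I expect the main obstacle to be this upgrade step. It requires checking carefully that $\mathcal{Z}_\mu$ is Hausdorff and that $\iota$ is injective, and it requires reading Definition \ref{def_markov_regular} with the uniform-in-$t\in I$ quantifier. One can also invoke Theorem \ref{thm_unique_uniform_structure_compact}: the uniformities on $K$ induced by $\mathcal{X}_\mu$ and by $\mathcal{Z}_\mu$ coincide, which even gives uniform continuity of the inverse map.
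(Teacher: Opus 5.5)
Your proposal is correct and follows the paper's route: Markov regularity supplies a single compact $K$ containing all $(M_F(\eeta))_t$ for $t\in I$, Lemma \ref{lem_disint_map} gives continuity of $t\mapsto(M_F(\eeta))_t$ into $\mathcal{Z}_\mu$, and the injection $\iota$ restricted to $K$ is a homeomorphism onto its image, so the map lifts continuously into $K$. Your justification of that last step is more complete than the paper's, which appeals only to injectivity of $\iota\circ\iota_K$: you argue that $\iota\lfloor_K$ is a continuous bijection from the compact $K$ onto a subset of the Hausdorff space $\mathcal{Z}_\mu$, with injectivity coming from $\mu$ charging every nonempty relatively open subset of $\supp\mu$ and Hausdorffness from the $\mathcal{I}_\phi$ separating points.
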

\begin{proof}
By definition of Markov regularity, for every $t\in I$, there exists a compact $K\subset \mathcal{X}_\mu$ such that $(M_F(\eeta))_t\in K$ for every finite subset $F$ of $\R$.  
	Define the canonical injection $\iota_K:K\hookrightarrow \mathcal{X}_\mu$ and recall the injection $\iota:\mathcal{X}_\mu\hookrightarrow \mathcal{Z}_\mu$, which is continuous by Lemma \ref{lem_canonic_inject_cont}.
	Since $\d$ is well-defined by Lemma \ref{lem_disint_map}, the following map is  continuous
	\begin{equation}
	f: I\ni t\longmapsto (M_F(\eeta))_t\in \mathcal{Z}_\mu. 
	\end{equation}
A priori, the subspace topology on $K$ and the initial topology on $K$ with respect to $\iota\circ \iota_K$ need not coincide, but since $\iota\circ \iota_K$ is injective, the coarsest topology on $K$, which makes $\iota\circ \iota_K$ continuous, is characterised by the fact that $\iota\circ \iota_K$ is an homeomorphism onto its image. So the subspace topology on $K$ is homeomorphic to the initial topology on $K$ with respect to $\iota\circ \iota_K$. 
Thus, there exists a unique continuous map 
	\begin{equation}
	g:I\ni t\longmapsto (M_F(\eeta))_t\in K,
	\end{equation} 
	such that $(\iota\circ \iota_K) \circ g =  f$, whence $g=\d'(M_F(\eeta ))\lfloor_I\in C(I;K)$.
\end{proof}

A \emph{subset ordering} of a countable and infinite set $D$ is a map from the natural numbers to the power set of $D$ i.e. $\N\ni N\longmapsto D_N\in \mathscr{P}(D)$, such that for every $N\in\N$, we have $D_{N+1}\backslash D_N$ is a singleton and $D_1$ is a singleton. 

The following lemma will be essential to show that all elements of the Markov hull of a Markov regular element of $\mathcal{Y}_\mu$ satisfy the strong Markov property. 

\begin{lemma}\label{lem_precompact}
	Let $\eeta\in\mathcal{Y}_\mu$ be Markov regular, and let $D$ be a countable, dense subset of $\R$. Let $(D_N)_{N\in\N}$ be a set ordering of $D$. Then the sequence
	$(\d' (M_{D_N}(\eeta)))_{N\in\N}$ is precompact in the uniform space of compact convergence $\mathcal{C}_c(\R;\mathcal{X}_\mu)$.
\end{lemma}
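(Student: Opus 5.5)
The plan is to apply the Ascoli theorem to the family $H:=\{\d'(M_{D_N}(\eeta))\}_{N\in\N}\subset \mathcal{C}_c(\R;\mathcal{X}_\mu)$, with $\mathfrak{S}$ the covering of $\R$ by compact intervals. We must therefore check two things. First, for each compact $I\subset\R$, the restrictions $H\lfloor_I$ form an equicontinuous family of maps $I\to\mathcal{X}_\mu$. Second, for each $t\in\R$, the set $H(t)=\{(M_{D_N}(\eeta))_t\}_{N\in\N}$ is precompact in $\mathcal{X}_\mu$.

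The pointwise condition is immediate from Markov regularity. Given a compact $I\ni t$, Lemma \ref{lem_compact_valued} provides a compact $K\subset\mathcal{X}_\mu$ with $(M_F(\eeta))_s\in K$ for every $s\in I$ and every finite $F$. In particular $H(t)\subset K$, so $H(t)$ is precompact.

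For equicontinuity on $I$, fix the compact $K$ from Lemma \ref{lem_compact_valued}. By that lemma, each $\d'(M_{D_N}(\eeta))\lfloor_I$ is a continuous map $I\to K$. Consider the injection $\iota\circ\iota_K:K\to\mathcal{Z}_\mu$. It is continuous by Lemma \ref{lem_canonic_inject_cont}, injective, and $K$ is compact, so its image is compact. Hence it is a homeomorphism of $K$ onto $\iota(K)\subset\mathcal{Z}_\mu$, provided $\mathcal{Z}_\mu$ is Hausdorff, which follows from the separation property 2.bis in the proof of Lemma \ref{lem_disint_map}. Theorem \ref{thm_unique_uniform_structure_compact} says a compact space carries exactly one uniformity compatible with its topology. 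Therefore the uniformity induced on $K$ from $\mathcal{X}_\mu$ and the uniformity transported from $\iota(K)\subset \mathcal{Z}_\mu$ coincide. In other words, $\iota\circ\iota_K$ is a uniform isomorphism onto its image, and $(\iota\circ\iota_K)^{-1}$ is uniformly continuous on $\iota(K)$.

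Step 1 in the proof of Lemma \ref{lem_disint_map} shows that the family $\{t\mapsto \eeta'_t\in\mathcal{Z}_\mu\}_{\eeta'\in\mathcal{M}_\mu(\G)}$ restricted to $I$ is equicontinuous, with this family taken through the initial uniformity, i.e. via the maps $\mathcal{I}_\phi$. In particular the maps $t\mapsto (M_{D_N}(\eeta))_t\in \iota(K)$ are equicontinuous on $I$. Composing an equicontinuous family with the fixed uniformly continuous map $(\iota\circ\iota_K)^{-1}$ preserves equicontinuity. Concretely, given an entourage $V$ of $\mathcal{X}_\mu$, choose an entourage $W$ of $\mathcal{Z}_\mu$ with $(W\cap\iota(K)^2)$ mapped into $V$, and then apply equicontinuity for $W$. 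This yields equicontinuity of $H\lfloor_I$ in $\mathcal{X}_\mu$. Ascoli then gives precompactness of $H$ in $\mathcal{C}_c(\R;\mathcal{X}_\mu)$.

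The main obstacle is the transfer of equicontinuity from the coarse space $\mathcal{Z}_\mu$ to the fine space $\mathcal{X}_\mu$. This requires both that the uniformities agree on $K$ and that $\mathcal{Z}_\mu$ is Hausdorff, so that the compact $\iota(K)$ is genuinely homeomorphic to $K$. I would verify the Hausdorff property through the countable separating family $\{\mathcal{I}_\phi\}_{\phi\in\mathscr{T}}$ into the Hausdorff space $\mathcal{P}(\G)$. A minor point is that Lemma \ref{lem_compact_valued} as stated gives one $K$ for all $t\in I$ simultaneously, which is exactly what the argument uses.
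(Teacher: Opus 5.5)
Your proof is correct and follows essentially the paper's route. It uses the compact $K$ from Lemma \ref{lem_compact_valued}, the uniform isomorphism $K\cong\iota(K)$ obtained from Theorem \ref{thm_unique_uniform_structure_compact}, and the estimates of Lemma \ref{lem_disint_map}. The only difference is where the transfer happens: you move equicontinuity through $(\iota\circ\iota_K)^{-1}$ and reapply Ascoli in $\mathcal{C}_c(\R;\mathcal{X}_\mu)$, whereas the paper moves the already-established precompactness in $\mathcal{C}_u(I;\iota(K))$ over to $\mathcal{C}_u(I;K)$. Your explicit check that $\mathcal{Z}_\mu$ is Hausdorff, so that the continuous injection on the compact $K$ is a homeomorphism onto $\iota(K)$, fills in a point the paper leaves implicit.
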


\begin{proof}
	As $\mathcal{C}_c(\R;\mathcal{X}_\mu)$ is metrizable and separable by Section \ref{subsec_uniform_C_c(R;X_mu)}, we have that
 sequential precompactness and precompactness are equivalent in $\mathcal{C}_c(\R;\mathcal{X}_\mu)$. It thus suffices to show that for every compact interval $I$ in $\R$, the sequence  $(\delta(M_{D_N}(\eeta))\lfloor_I)_{N\in\N}$ is compact in
	the uniform space $\mathcal{C}_u(I;\mathcal{X}_\mu)$ of uniform convergence of continuous maps from $I$ to $\mathcal{X}_\mu$. 
	
		Let $I$ be a compact interval in $\R$.  In view of Lemma \ref{lem_compact_valued}, there exists a compact $K\subset \mathcal{X}_\mu$ such that $\delta'(M_{D_N}(\eeta))\lfloor_I$ is continuous from $I$ to $K$ for every $N\in\N$. By Lemma \ref{lem_canonic_inject_cont}, the canonical injection $\iota: \mathcal{X}_\mu\hookrightarrow \mathcal{Z}_\mu$ is uniformly continuous, whence $\iota(K)$ is compact in $\mathcal{Z}_\mu$. 
	In view of Lemma \ref{lem_disint_map}, the sequence  $(\d(M_{D_N}(\eeta))\lfloor_I)_{N\in\N}$ is precompact in $\mathcal{C}_u(I;\mathcal{Z}_\mu)$, and since $(M_{D_N}(\eeta))_t$ lies in $\iota(K)$ for every $t\in I$, the sequence $(\d(M_{D_N}(\eeta))\lfloor_I)_{N\in\N}$ is also precompact in $\mathcal{C}_u(I;\iota(K))$. Since $K$ and $\iota(K)$ are isomorphic as uniform spaces by Theorem \ref{thm_unique_uniform_structure_compact}, $(\d'(M_{D_N}(\eeta))\lfloor_I)_{N\in\N}$ is precompact in $\mathcal{C}_u(I;K)$, whence it is also precompact in $\mathcal{C}_u(I;\mathcal{X}_\mu)$. 
		The claim follows since $I$ was an arbitrary compact in $\R$. 
	
\end{proof}
We now define the $D$-Markov hull of an element in $\mathcal{M}_\mu(\G)$. 
\begin{definition}\label{def_markov_D-hull}
		Let $\eeta\in \mathcal{M}_\mu(\G)$ and $D$ be a dense, countable subset of $\R$. We shall call Markov $D$-hull of $\eeta$ the set consisting of all vague accumulation points in $\mathcal{M}_\mu(\G)$ of sequences of the form $(M_{D_N}(\eeta))_{N\in\N}$ where $(D_N)_{N\in\N}$ run in the class of all set ordering of $D$. We shall denote the Markov $D$-hull of $\eeta$ by $\mathfrak{M}_D(\eeta)$, and we can write it as
		\begin{equation}
		\mathfrak{M}_D(\eeta)=\bigcup_{(D_N)_{N\in\N}\in Ord(D)} \Big\{\bar\eeta\in\mathcal{M}_\mu(\G)\;:\;\bar\eeta \text{ is a vague accumulation point of the sequence }(M_{D_N}(\eeta) )_{N\in\N}\Big\},
		\end{equation}
		where $Ord(D)$ denotes the class of all orderings of the set $D$.
\end{definition}
We then define the Markov hull of an element in $\mathcal{M}_\mu(\G)$.  
\begin{definition}\label{def_markov_hull}
	Let $\eeta\in \mathcal{M}_\mu(\G)$. We shall call Markov hull of $\eeta$ the subset of $\mathcal{M}_\mu(\G)$ formed as the union of the Markov $D$-hulls of $\eeta$ over the family of dense and countable subsets $D\subset \R$. We shall denote the Markov hull of $\eeta$ by  $\mathfrak{M}(\eeta)$, and we can write it as
	\begin{equation}
	\mathfrak{M}(\eeta)=\bigcup_{D\in\mathfrak{D}}\mathfrak{M}_D(\eeta),
	\end{equation}
	where $\mathfrak{D}\subset \mathscr{P}(\R)$ is the family of subsets of $\R$ which are dense and countable. 
\end{definition}

	Recalling the canonical injection $\iota:\mathcal{X}_\mu\hookrightarrow \mathcal{Z}_\mu$, we define the induced injection $$\iota^*:\mathcal{C}_c(\R;\mathcal{X}_\mu)\ni \{\nnu_t\}_{t\in\R }\longmapsto \{\iota\nnu_t\}_{t\in\R}\in \mathcal{C}_c(\R;\mathcal{Z}_\mu).$$
	\begin{lemma}\label{lem_iota*}
		The map $\iota^*$ is well-defined and continuous. Furthermore $\iota^*\circ \d'=\d\lfloor_{\mathcal{Y}_\mu}$. 
	\end{lemma}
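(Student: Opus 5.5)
To prove Lemma \ref{lem_iota*}, I would establish the three assertions in turn: well-definedness, continuity, and the factorisation $\iota^*\circ\d'=\d\lfloor_{\mathcal{Y}_\mu}$.

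\textbf{Well-definedness.} Let $\{\nnu_t\}_{t\in\R}\in \mathcal{C}_c(\R;\mathcal{X}_\mu)$, that is, the map $t\mapsto \nnu_t$ is continuous from $\R$ into $\mathcal{X}_\mu$. By Lemma \ref{lem_canonic_inject_cont}, the injection $\iota$ is uniformly continuous, hence continuous. The composite $t\mapsto \iota\nnu_t$ is therefore continuous from $\R$ to $\mathcal{Z}_\mu$, so $\iota^*\{\nnu_t\}_{t\in\R}$ belongs to $\mathcal{C}_c(\R;\mathcal{Z}_\mu)$.

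\textbf{Continuity.} I would prove the stronger statement that $\iota^*$ is uniformly continuous for the uniformities of compact convergence. Take a basic entourage $\WW(I;\cap_{i=1}^N V_{\phi_i})$ of $\mathcal{C}_c(\R;\mathcal{Z}_\mu)$, as described in Section \ref{subsec_C_c(R;Z_mu)}, with $I\subset\R$ compact. Since $\iota$ is uniformly continuous, there is an entourage $U$ of $\mathcal{X}_\mu$ such that $(\nnu,\nnu')\in U$ implies $(\iota\nnu,\iota\nnu')\in \cap_{i=1}^N V_{\phi_i}$. Then $\WW(I;U)$ is an entourage of $\mathcal{C}_c(\R;\mathcal{X}_\mu)$. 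If a pair $(\{\nnu_t\},\{\nnu'_t\})$ lies in $\WW(I;U)$, then $(\nnu_t,\nnu'_t)\in U$ for every $t\in I$, hence $(\iota\nnu_t,\iota\nnu'_t)\in\cap_i V_{\phi_i}$ for every $t\in I$. This shows that the image pair lies in $\WW(I;\cap_i V_{\phi_i})$. In other words, this is the general fact that postcomposition with a uniformly continuous map is uniformly continuous for $\mathfrak{S}$-convergence, applied with $\mathfrak{S}$ the compacts of $\R$.

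\textbf{Factorisation.} Let $\eeta\in\mathcal{Y}_\mu$ and $t\in\R$. By definition, $\d'(\eeta)(t)=\eeta_t\in\mathcal{X}_\mu$ is a continuous disintegration of $\eeta$ with respect to $e_t$ and $\mu$, restricted to $\supp\mu$. The map $\iota$ sends it to its $\mu$-a.e. equivalence class. By essential uniqueness of disintegrations (Section \ref{subsec_disint}), this class is exactly the element $\eeta_t\in\mathcal{Z}_\mu$ used in the definition of $\d$, namely the class containing every disintegration of $\eeta$ with respect to $e_t$ and $\mu$. Hence $\iota(\d'(\eeta)(t))=\d(\eeta)(t)$ for all $t$, which gives $\iota^*\circ\d'=\d\lfloor_{\mathcal{Y}_\mu}$.

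I expect no real obstacle. The only point needing care is the notational abuse that writes $\eeta_t$ both for the continuous representative and for its class. One must check that the continuous representative is genuinely a disintegration, so that it lies in the same class, and this holds by the definition of $\mathcal{Y}_\mu$.
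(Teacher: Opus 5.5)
Your proof is correct and follows the paper's route: well-definedness comes from composing with the continuous injection $\iota$, continuity from transferring the uniform continuity of $\iota$ to uniform convergence on each compact $I\subset\R$, and the factorisation from essential uniqueness of disintegrations (which the paper leaves as a direct check). The only difference is that the paper argues with sequences and invokes metrizability of the function spaces, whereas your entourage argument gives the slightly stronger uniform continuity of $\iota^*$ and needs no metrizability.
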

\begin{proof}
	To see that $\iota^*$ is well-defined, notice that the map $\R\ni t\longmapsto \iota(\nnu_t)\in \mathcal{Z}_\mu$ is continuous, since $\R\ni t\longmapsto \nnu_t\in \mathcal{X}_\mu$ is continuous and $\iota:\mathcal{X}_\mu\hookrightarrow \mathcal{Z}_\mu$ is also continuous. As $\mathcal{C}_c(\R;\mathcal{X}_\mu)$ and $\mathcal{C}_c(\R;\mathcal{Z}_\mu)$ are metrizable, it suffices to verify the sequential characterisation of continuity. Let $(\{\nnu^n_t\}_{t\in\R})_{n\in\N}$ be a sequence converging in $\mathcal{C}_c(\R;\mathcal{X}_\mu)$ to $\{\nnu_t\}_{t\in\R}$. Let $I$ be a compact in $\R$, and we then have that $\{\nnu^n_t\}_{t\in I}$ converges in $\mathcal{C}_u(I;\mathcal{X}_\mu)$ to $\{\nnu_t\}_{t\in I}$ as $n\to+\infty$. 
	Since $\iota$ is uniformly continuous, this implies that $\{\iota \nnu^n_t\}_{t\in I}$ converges in $\mathcal{C}_u(I;\mathcal{Z}_\mu)$ to $\{\iota \nnu_t\}_{t\in I}$ as $n\to+\infty$, whence $\{\iota\nnu^n_t\}_{t\in \R}$ converges in $\mathcal{C}_c(\R;\mathcal{Z}_\mu)$ to $\{\iota\nnu_t\}_{t\in \R}$ as $n\to+\infty$.	The identity $\iota^*\circ \d'=\d\lfloor_{\mathcal{Y}_\mu}$ can be checked directly.
\end{proof}
We now prove that every element of the Markov hull of a Markov regular element in $\mathcal{Y}_\mu$ satisfies the strong Markov property. 
\begin{lemma}\label{lem_markov_all_times}
Let $\eeta\in\mathcal{Y}_\mu$ be Markov regular. Then every element in the Markov hull of $\eeta$ satisfies the strong Markov property.
\end{lemma}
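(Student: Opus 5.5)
Let $\bar\eeta$ be in the Markov hull of $\eeta$. Then there are a dense countable $D\subset\R$, a set ordering $(D_N)_{N\in\N}$ of $D$ and a subsequence $(N_k)_{k\in\N}$ such that $\eeta^k:=M_{D_{N_k}}(\eeta)$ converges vaguely to $\bar\eeta$ in $\mathcal{M}_\mu(\G)$; the latter is compact and metrizable by Lemma \ref{lem_mu_invariant_space}.

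\textbf{Step 1: upgrade vague convergence to convergence of disintegrations.} I would invoke Lemma \ref{lem_precompact}. Passing to a further subsequence, $\d'(\eeta^k)$ converges in $\mathcal{C}_c(\R;\mathcal{X}_\mu)$ to some $\{\nnu_t\}_{t\in\R}$. By Lemma \ref{lem_iota*} and the continuity of $\iota^*$, we get $\d(\eeta^k)=\iota^*\d'(\eeta^k)\to\iota^*\{\nnu_t\}$ in $\mathcal{C}_c(\R;\mathcal{Z}_\mu)$. By Lemma \ref{lem_disint_map}, $\d$ is continuous, so $\d(\eeta^k)\to\d(\bar\eeta)$ as well. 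The space $\mathcal{Z}_\mu$ is Hausdorff, because the family $\mathcal{I}_\phi$ separates points (step 2.bis of that lemma), so limits are unique. Hence $\bar\eeta_t=\iota\nnu_t$ for every $t$. This means each $\bar\eeta_t$ admits the continuous representative $\nnu_t\in\mathcal{X}_\mu$ and $t\mapsto\nnu_t$ is continuous, so $\bar\eeta\in\mathcal{Y}_\mu$. Since the convergence in $\mathcal{X}_\mu$ is uniform on compacts of $\supp\mu$, for every $x\in\supp\mu$ and every $t_k\to t$ we obtain $\eta^k_{t_k,x}\to\nu_{t,x}$ vaguely. Here I use equicontinuity in $t$ of the convergent family on compact intervals.

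\textbf{Step 2: Markov property at times of $D$.} Fix $s\in D$. For $k$ large, $s\in D_{N_k}$, and by commutativity $\eeta^k=M_s(M_{D_{N_k}\setminus\{s\}}(\eeta))$, so $M_s(\eeta^k)=\eeta^k$. The key point is pointwise: I need $M_{s,x}\eta^k_{s,x}=\eta^k_{s,x}$ for \emph{every} $x\in\supp\mu$, not only $\mu$-a.e. The definition gives $\{M_{s,x}\eta_{s,x}\}_x$ as a disintegration of $M_s(\eeta^k)=\eeta^k$, so by essential uniqueness equality holds $\mu$-a.e. To pass to every $x\in\supp\mu$, I use that $x\mapsto\eta^k_{s,x}$ is continuous, that $x\mapsto M_{s,x}\eta^k_{s,x}$ is continuous, and that a $\mu$-full set is dense in $\supp\mu$. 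The continuity of $x\mapsto M_{s,x}\eta^k_{s,x}$ follows from the product formula in the proof of Lemma \ref{lem_markov_op_cont}, now with varying $x$, using that $f_s$ is a uniform isomorphism globally on $\G$ restricted to the set where $\g(s)$ is fixed. Then Lemma \ref{lem_markov_limit}, applied with constant times $t_k=s$, gives $M_{s,x}\nu_{s,x}=\nu_{s,x}$ for all $x\in\supp\mu$.

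\textbf{Step 3: propagation to all times.} Given $t\in\R$ and $x\in\supp\mu$, choose $s_n\in D$ with $s_n\to t$. By continuity of $t\mapsto\nnu_t$ in $\mathcal{X}_\mu$, we have $\nu_{s_n,x}\to\nu_{t,x}$ vaguely, and by Step 2, $M_{s_n,x}\nu_{s_n,x}=\nu_{s_n,x}$. Lemma \ref{lem_markov_limit} then yields $M_{t,x}\nu_{t,x}=\nu_{t,x}$, which is the strong Markov property of Definition \ref{def_strong_markov}. Integrating over $x$ gives $M_t(\bar\eeta)=\bar\eeta$.

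\textbf{Main obstacle.} I expect the hardest part to be the pointwise (everywhere on $\supp\mu$) identity in Step 2. It requires the continuity in $x$ of the Markovianised kernel, which is not stated earlier. I would prove it via the formula $\int\Phi\,dM_{s,x}\nu=\iint\Phi(f_s^{-1}(\g_1,\g_2))\,\nu^{(-\infty,s]}(d\g_1)\,\nu^{[s,+\infty)}(d\g_2)$ together with the compactness supplied by (Comp-$\G$). If this fails, a fallback is to work only $\mu$-a.e. and then invoke the continuity of the limit kernels.
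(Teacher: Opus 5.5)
Your proof is correct, but it organises the limit differently from the paper.

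\textbf{The paper's route.} The paper takes a single diagonal limit. For fixed $t$ and compact $K\subset\supp\mu$, it picks $t_k\in D_{N_k}$ with $t_k\to t$. It then uses $\WW_k'\circ\WW_k'\subset\WW(K;V_k)$ to combine the uniform-in-time convergence of $\d'(M_{D_N}(\eeta))$ with the continuity of $s\mapsto\bar\eeta_s$. This gives $(M_{D_{N_k}}(\eeta))_{t_k,x}\to\bar\eta_{t,x}$, and Lemma \ref{lem_markov_limit} is applied once, with moving times.

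\textbf{Your route.} You take two successive limits. First $k\to\infty$ at a fixed $s\in D$, where Lemma \ref{lem_markov_limit} reduces to Lemma \ref{lem_markov_op_cont}. Then $s\to t$ inside the limit object, using only the continuity of $t\mapsto\nnu_t$ in $\mathcal{X}_\mu$. The ingredients are the same. Your split separates the two convergences cleanly and avoids the subsubsequence step.

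More substantively, you make explicit two points the paper leaves implicit:
\begin{enumerate}
\item that $\bar\eeta\in\mathcal{Y}_\mu$, which is needed for $\d'(\bar\eeta)$ to make sense;
\item that the identity $M_{s,x}\nu=\nu$ for the approximating kernels is needed at \emph{every} $x\in\supp\mu$, whereas $M_s(\eeta^k)=\eeta^k$ only gives it $\mu$-a.e. The paper simply asserts the hypotheses of Lemma \ref{lem_markov_limit} at a given $x\in K$.
\end{enumerate}

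\textbf{Adjustments to your sketch.}
\begin{enumerate}
\item The continuity of $x\mapsto M_{s,x}\nu_x$ cannot come from $f_s$ on a fibre where $\g(s)$ is fixed, because the fibres $\G_{s,x}$ change with $x$. Instead, write $\int\Phi\,dM_{s,x}\nu_x$ as the integral against $\nu_x\times\nu_x$ of $\Phi$ evaluated at the path equal to $\g$ on $(-\infty,s]$ and to $\g'$ on $[s,+\infty)$. This is a bounded function, continuous on the closed set $\{(\g,\g')\in\G\times\G:\g(s)=\g'(s)\}$, which carries every $\nu_x\times\nu_x$. Then use that vague convergence of probability measures to a probability measure is narrow, and that narrow convergence passes to products.
\item Your fallback is not an independent route. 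Upgrading a $\mu$-a.e. identity for the limit kernels to all of $\supp\mu$ needs exactly this continuity, or equivalently closedness of $\{(x,\nu):M_{s,x}\nu=\nu\}$.
\item Writing $M_s(\eeta^k)=\eeta^k$ uses idempotence $M_s\circ M_s=M_s$, which the paper also uses without comment. It follows from Proposition \ref{prop_tensor}, since $M_{s,x}\nu$ has the same restrictions to $(-\infty,s]$ and $[s,+\infty)$ as $\nu$.
\end{enumerate}
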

\begin{proof} 
	Let $\bar\eeta\in\mathcal{M}_\mu(\G)$ be in the Markov hull of $\eeta$. Then there exists a countable, dense subset $D$ of $\R$ and a set ordering $(D_N)_{N\in\N}$ of $D$ such that, modulo a subsequence $M_{D_N}(\eeta)$ converges vaguely to $\bar\eeta$ as $N\to+\infty$. By Lemma \ref{lem_disint_map}, we also have that $\d(M_{D_N}(\eeta))$ converges to $\d(\bar\eeta)$ in $\mathcal{C}_c(\R;\mathcal{Z}_\mu)$ as $N\to+\infty$. 
	
	\bigskip 
	
	1. We claim that $\d'(M_{D_{N}}(\eeta))$ converges to $\d'(\bar\eeta)$ in $\mathcal{C}_c(\R;\mathcal{X}_\mu)$ as $N\to+\infty$. 
		Let $\xi:\N\to \N$ be an increasing map. Recall that the uniform space $\mathcal{C}_c(\R;\mathcal{X}_\mu)$ is metrizable by Theorem \ref{thm_separable}, so by Lemma \ref{lem_precompact}, there exists another increasing map $\z:\N\to\N$ such that 
	$\d'(M_{D_{(\zeta\circ\xi)(N)}}(\eeta))$ converges in $\mathcal{C}_c(\R;\mathcal{X}_\mu)$ as $N\to+\infty$ to some element $\rrho=\{\rrho_t\}_{t\in\R}.$ Observe that $\iota^*\rrho\in \mathcal{C}_c(\R;\mathcal{Z}_\mu)$ and that $\d(M_{D_{(\zeta\circ\xi)(N)}}\eeta)$ converges in $\mathcal{C}_c(\R;\mathcal{Z}_\mu)$ to $\iota^*\rrho$ as $N\to+\infty$ by Lemma \ref{lem_iota*}. Therefore, we have $\iota^*\rrho=\d(\bar\eeta),$ whence $\rrho=\d'(\bar\eeta)$. By the subsubsequence lemma, it follows that $\d'(M_{D_{N}}(\eeta))$ converges to $\rrho$ in $\mathcal{C}_c(\R;\mathcal{X}_\mu)$ as $N\to+\infty$. 
	 
	\bigskip 
	2. Let us now show that $\bar\eeta$ satisfies the strong Markov property. 
	Let $K$ be a compact in $\supp\mu$, let $(V_k)_{k\in\N}$ be a decreasing fundamental system of entourages  of $\mathcal{P}(\G)$, i.e. $V_{k+1}\subset V_k$ for every $k\in\N$. Let $k\in\N$, let $\WW(K;V_k)$ be an entourage of $\mathcal{X}_\mu,$ and let $t\in \R$. Then there exists an entourage $\WW_k'$ of $\mathcal{X}_\mu$ such that $\WW_k'\circ \WW_k'\subset \WW(K;V_k)$ by Axiom $(U_3)$ of Definition \ref{def_uniform_structure}. Also there exists $N_k\in\N$  and $t_k\in D_{N_k}\cap [t-1,t+1]$ such that:
	\begin{itemize} 
			 \item  $t_k\to t$ as $k\to+\infty$ thanks to density of $D$ in $[t-1,t+1]$;
			 		 \item  $(\bar\eeta_{t_k},\bar\eeta_t)\in \WW_k'$ thanks to uniform continuity of $[t-1,t+1]\ni t\longmapsto \eeta_t\in \mathcal{X}_\mu$;
	\item 	 $((M_{D_{N_k}}(\eeta))_{t_k},\bar\eeta_{t_k})\in \WW_k'$ thanks to convergence of $\d'(M_{D_N}(\eeta))$ to $\d'(\bar\eeta)$ in $\mathcal{C}_u([t-1,t+1];\mathcal{X}_\mu)$. 
		\end{itemize}
	
	\bigskip 
	
	Therefore, we have $((M_{D_{N_k}}(\eeta))_{t_k},\bar\eeta_{t_k})\in \WW(K;V_k), $ which implies that for every $x\in K$, we have $(M_{D_{N_k}}(\eeta))_{t_k,x},\bar\eta_{t,x})\in V_k$. Setting $\nu_k=(M_{D_{N_k}}(\eeta))_{t_k,x}$ and $\nu =\bar\eta_{t,x}$, the hypothesis of Lemma \ref{lem_markov_limit} are satisfied, so that we have $M_{t,x}\bar\eta_{t,x}=\bar\eta_{t,x}$ for every $x\in K$. As $K$ and $t$ were arbitrary, this proves the thesis. 
 	\end{proof}

Let us now conclude the proof of Theorem \ref{thm_main_strong_markov}. 
\begin{proof}[Proof of Theorem \ref{thm_main_strong_markov}]
Let $\eeta$ be Markov regular, and let $D\subset \R$ be dense and countable, and let $(D_N)_{N\in\N}$ be a set ordering of $D$. Note that $M_{D_N}(\eeta)\in \mathcal{M}_\mu(\G)$ for every $N\in\N$, whence by Lemma \ref{lem_mu_invariant_space}, $(M_{D_N}(\eeta))_{N\in\N}$ has at least one accumulation point in $\mathcal{M}_\mu(\G)$, 
which shows that the Markov hull of $\eeta$ is non-empty. Lemma \ref{lem_markov_all_times} then shows that every element of the Markov hull of $\eeta$ satisfies the strong Markov property. 
	\end{proof} 

\bigskip 

\section{Translation invariant measures} \label{sec_translation_inv}
Let $(Y,\cdot)$ be a locally compact Polish group. Let $\mu$ be the left Haar measure on $Y$. $\mu$ is uniquely determined up to a multiplicative constant. 
Recall that  $\mathcal{C}_c(\R; Y)$ is the uniform space of compact convergence. 
Let $\G$ be as in Section \ref{subsec_space_G}. 
Define the left action of $Y$ on $\mathcal{C}_c(\R; Y)$ by $Y \times \mathcal{C}_c(\R; Y)\ni (y,\g)\longmapsto y\cdot \g \in\mathcal{C}_c(\R; Y)$. Assume additionally that $\G$ is such that the induced left action of $Y$ on $\G$ is transitive. 
(All this section will study left Haar measures and left actions $Y$, but can be modified straightforwardly to address the case of right Haar measures and right actions on $Y$.)

\subsection{Left translation of paths}
We define the left translation of paths $$l_y:\mathcal{C}_c(\R;Y)\ni \g\longmapsto y\cdot \g\in \mathcal{C}_c(\R;Y),$$
and by transitivity of the action of $Y$ on $\G$, the restriction $l_y\lfloor_\G$ of $l_y$ to $\G$ is bijective onto $\G$. 
Denote by $(l_y)_\#$ the induced pushforward map from $\mathcal{M}(\G)$ to $\mathcal{M}(\G)$, and notice that $(l_y)_\#\mathcal{M}_\mu(\G)\subset \mathcal{M}_\mu(\G).$
\begin{lemma}\label{lem_cont_ly}
	Let $y\in Y$. Then the map $(l_y)_\#:\mathcal{P}(\G)\to \mathcal{P}(\G)$ is vaguely continuous. 
\end{lemma}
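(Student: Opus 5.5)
The plan is to reduce vague continuity of $(l_y)_\#$ to the fact that $\Phi\circ l_y\in C_c(\G)$ for every $\Phi\in C_c(\G)$, exactly as in step 2 of the proof of Lemma \ref{lem_markov_limit} for the time shifts $b_h$. Since $\mathcal{P}(\G)$ is metrizable (Section \ref{subsec_radon_proba}, via Lemma \ref{lem_separability}), it suffices to prove sequential continuity. So let $\nu_n\to\nu$ vaguely in $\mathcal{P}(\G)$ and $\Phi\in C_c(\G)$. By the change of variables formula for pushforwards,
\begin{equation}
\int_\G\Phi(\g)\,(l_y)_\#\nu_n(d\g)=\int_\G\Phi(y\cdot\g)\,\nu_n(d\g).
\end{equation}
Hence, once $\Phi\circ l_y\in C_c(\G)$ is known, the right-hand side converges to $\int_\G\Phi(y\cdot\g)\nu(d\g)=\int_\G\Phi\,d(l_y)_\#\nu$. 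Since $\Phi$ is arbitrary, this gives the thesis.

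The first step is continuity of $l_y$ on $\G$. The group multiplication $Y\times Y\to Y$ is continuous, so $Y\ni z\mapsto y\cdot z$ is a homeomorphism of $Y$ with inverse $z\mapsto y^{-1}\cdot z$. For a compact $I\subset\R$ and $\g_n\to\g$ uniformly on $I$, the set $\g(I)$ is compact. I would argue by contradiction with subsequences: if $y\cdot\g_n$ did not converge to $y\cdot\g$ uniformly on $I$, one could extract $s_n\in I$ with $s_n\to s$ and $(y\cdot\g_n(s_n),y\cdot\g(s_n))$ outside a fixed entourage. This is impossible, because $\g_n(s_n)\to\g(s)$ and $\g(s_n)\to\g(s)$, and left multiplication is continuous at $\g(s)$. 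Thus $l_y$ is continuous in the topology of compact convergence. By transitivity, $l_y\lfloor_\G$ is a bijection of $\G$ onto itself with inverse $l_{y^{-1}}\lfloor_\G$, which is also continuous. So $l_y$ is a homeomorphism of $\G$.

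The second step is compactness of supports. We have $\supp(\Phi\circ l_y)=l_y^{-1}(\supp\Phi)=l_{y^{-1}}(\supp\Phi)$, which is the continuous image of a compact set and therefore compact. Hence $\Phi\circ l_y\in C_c(\G)$.

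The only point needing care is the uniform continuity argument in the first step, because $Y$ is merely a uniform space rather than a metric one. Since $Y$ is Polish, I expect to fix a compatible metric and run the sequential argument above. I do not anticipate any further obstacle.
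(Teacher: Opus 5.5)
Your proof is correct and takes essentially the same route as the paper. Both reduce to sequential continuity using metrizability of $\mathcal{P}(\G)$, change variables, and conclude from $\Phi\circ l_y\in C_c(\G)$. The paper only asserts this last membership, whereas you justify it by showing that $l_y$ is a homeomorphism of $\G$, with inverse $l_{y^{-1}}$, under compact convergence.
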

\begin{proof}
	It suffices to check the sequential characterisation of continuity since $\mathcal{P}(\G)$ is metrizable by Lemma \ref{lem_separability}. 
	Let $(\nu_n)_{n\in\N}$ be a sequence in $\mathcal{P}(\G)$ converging vaguely to $\nu\in \mathcal{P}(\G)$, and let $\Phi\in C_c(\G)$. Observe that $\Phi\circ l_y\in C_c(\G)$, whence
	\begin{equation}
	\lim_{n\to+\infty}\int_\G \Phi(\g)(l_y)_\#\nu_n(d\g)=	\lim_{n\to+\infty}\int_\G \Phi(l_y\g)\nu_n(d\g)=\int_\G\Phi(l_y\g)\nu(d\g). 
	\end{equation}
	Thus $(l_y)_\# \nu_n$ converges vaguely to $(l_y)_\#\nu$ as $n\to+\infty$. This proves the thesis. 
\end{proof}

\begin{lemma}\label{lem_conv_l_x}
	Let $\g\in \G$. Then $l_x\g$ converges to $l_y\g$ in $\G$ as $x\to y$. 
\end{lemma}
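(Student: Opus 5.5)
The statement to prove is Lemma \ref{lem_conv_l_x}: for fixed $\g\in\G$, $l_x\g\to l_y\g$ in $\G$ as $x\to y$. My plan is to reduce it to continuity of the group multiplication on compact time intervals.

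First, the topology on $\G$ is induced from the uniformity of compact convergence on $\mathcal{C}_c(\R;Y)$. So it suffices to fix a compact interval $I\subset\R$ and an entourage $V$ of $Y$, and to find a neighbourhood $U$ of $y$ in $Y$ such that
\begin{equation*}
\bigl(y\cdot\g(t),\,x\cdot\g(t)\bigr)\in V\qquad\text{for every } t\in I \text{ and every } x\in U .
\end{equation*}
We may take $V$ from a fundamental system of entourages of the uniformity of $Y$. Since $Y$ is locally compact, we may also restrict attention to a compact neighbourhood $C$ of $y$.

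Second, the set $L:=\g(I)$ is compact in $Y$, because $\g$ is continuous. The multiplication map $m:Y\times Y\to Y$ is continuous, so its restriction to the compact set $C\times L$ is uniformly continuous. Here I use Theorem \ref{thm_unique_uniform_structure_compact}: the uniformity on the compact space $C\times L$ is the unique one compatible with its topology, and it agrees with the restriction of the product uniformity. Uniform continuity then yields an entourage $W$ of $Y$ with the following property: whenever $(x,y)\in W$ and $x,y\in C$, we have $(x\cdot z,\,y\cdot z)\in V$ for every $z\in L$.

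An alternative route, if one wants to avoid the uniform-structure subtleties, is a compactness argument. For each $z\in L$, continuity of $m$ at $(y,z)$ gives neighbourhoods $U_z\ni y$ and $O_z\ni z$ such that $x\cdot z'\in V'(y\cdot z)$ for all $x\in U_z$ and $z'\in O_z$, where $V'$ is a symmetric entourage with $V'\circ V'\subset V$. We then cover $L$ by finitely many $O_{z_j}$ and take $U:=\bigcap_j U_{z_j}$.

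Finally, we set $U:=W(y)\cap C$, or the finite intersection from the alternative route. This gives the required neighbourhood for the chosen $I$ and $V$. Hence $l_x\g\to l_y\g$ in $\mathcal{C}_c(\R;Y)$, and therefore in $\G$, since $l_x\g\in\G$ for every $x$.

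The only real obstacle is the uniform-continuity step. The uniformity on $Y$ need not be translation compatible, since it is just some uniformity inducing the Polish topology. This is why I would run the compactness argument with entourages, combining $(y\cdot z_j,\,y\cdot z')$ closeness via $(U_3)$ and the continuity of $z'\mapsto y\cdot z'$ on $L$. I expect that step to need the most care.
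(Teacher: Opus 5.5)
Your argument is correct, but it takes a different route from the paper.

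\textbf{The paper's route.} The paper reduces to $y=e$ and gets only \emph{pointwise} convergence $x\cdot\g(t)\to\g(t)$ from continuity of multiplication. It then upgrades this using the structural hypothesis (Comp-$\G$). For $x$ near $e$, the point $x\cdot\g(0)$ lies in a compact neighbourhood $K$ of $\g(0)$, so all $l_x\g$ lie in the compact set $e_0^{-1}(K)\cap\G$. On that set the compact-convergence topology coincides with the coarser Hausdorff pointwise topology, because the identity is a continuous bijection from a compact space onto a Hausdorff one. The paper leaves these steps implicit.

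\textbf{Your route.} You get the estimate uniformly in $t\in I$ directly, either from uniform continuity of multiplication on $C\times\g(I)$ or from the finite-cover argument. This never uses (Comp-$\G$) and needs no reduction to $y=e$. It shows that $x\mapsto l_x\g$ is continuous into $\mathcal{C}_c(\R;Y)$ for \emph{any} continuous path. The set $\G$ enters only through $l_x\g\in\G$, which lets you pass to the subspace topology. So your proof is more elementary and more general, while the paper's is shorter because it relies on the compactness built into $\G$.

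\textbf{Your closing worry.} The concern in your last paragraph is unnecessary. Uniform continuity of the continuous map $m$ on the compact set $C\times L$ does not require the uniformity of $Y$ to be compatible with translations. Given an entourage $V$ of $Y$, the set $(m\times m)^{-1}(V)$ is a neighbourhood of the diagonal of $C\times L$. By Theorem~\ref{thm_unique_uniform_structure_compact} it is therefore an entourage of $C\times L$. Hence your first route is already complete, and the finite-cover alternative is an equally valid backup.
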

\begin{proof}
	Without loss of generality, we can take $y=e.$ By continuity of the group multiplication, for every $t\in \R$, we have that $x\cdot \g(t)$ converges to $\g(t)$ in $Y$ as $x\to e$, i.e. pointwise convergence. Since $Y$ is locally compact, there exists a compact neighborhood $K$ in $Y$ containing $\g(0)$. Also $e_0^{-1}(K)$ is compact in $\G$. Therefore $l_x\g$ converges $\g$ in $\G$ as $x\to e$. 
\end{proof}
\begin{lemma}\label{lem_cont_group}
Let $\nu\in \mathcal{P}(\G)$. Then $(l_x)_\#\nu$ converges vaguely to $(l_y)_\#\nu$ as $x\to y.$
\end{lemma}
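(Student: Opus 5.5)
We reduce to testing against a single $\Phi\in C_c(\G)$. Since $\mathcal{P}(\G)$ is metrizable by Lemma \ref{lem_separability}, it suffices to take a sequence $x_n\to y$ in $Y$ and show
\begin{equation}
\lim_{n\to+\infty}\int_\G \Phi(l_{x_n}\g)\,\nu(d\g)=\int_\G\Phi(l_y\g)\,\nu(d\g)\qquad\forall\Phi\in C_c(\G).
\end{equation}
By Lemma \ref{lem_conv_l_x}, $l_{x_n}\g\to l_y\g$ in $\G$ for every fixed $\g\in\G$. Since $\Phi$ is continuous, $\Phi(l_{x_n}\g)\to\Phi(l_y\g)$ pointwise in $\g$. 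Moreover $|\Phi\circ l_{x_n}|\le\|\Phi\|_{C^0}$, and $\nu$ is a probability measure. Hence the dominated convergence theorem gives the displayed limit, and therefore $(l_{x_n})_\#\nu\to(l_y)_\#\nu$ vaguely.

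The limit is indexed by $x\to y$ in $Y$, not by a sequence. We pass to sequences as follows. $Y$ is Polish, hence first countable, so sequential convergence as $x_n\to y$ characterises the limit. Since the vague topology on $\mathcal{P}(\G)$ is metrizable, proving sequential convergence for every sequence $x_n\to y$ is enough.

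The only step needing care is the pointwise convergence, and it is already supplied by Lemma \ref{lem_conv_l_x}. Applying that lemma with the base point $y$ rather than $e$ means writing $l_x=l_{xy^{-1}}\circ l_y$ and using $xy^{-1}\to e$. This is exactly the reduction the lemma states it performs ("without loss of generality $y=e$"), so we invoke it directly. With that in hand, the argument needs neither equicontinuity nor compactness: the uniform bound and finiteness of $\nu$ suffice for dominated convergence.
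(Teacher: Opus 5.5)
Your proof is correct, but it takes a different route from the paper's. The paper reduces to $y=e$ via the continuity of $(l_y)_\#$ (Lemma \ref{lem_cont_ly}). It then asserts that $\Phi\circ l_x\to\Phi$ in $C_c(\G)$, i.e.\ uniformly on $\G$, as $x\to e$, deducing this from a claimed compactness of $\{\Phi\circ l_x\}_{x\in K}$ for a compact neighbourhood $K$ of $e$; convergence of the integrals is then immediate.

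You use only the pointwise convergence $l_{x_n}\g\to l_y\g$ of Lemma \ref{lem_conv_l_x} plus dominated convergence, which is more elementary. No uniform control of the test functions is needed, and the only topological input is first countability of $Y$ to pass to sequences. The metrizability of $\mathcal{P}(\G)$ is not really needed, since vague convergence is by definition convergence against each $\Phi\in C_c(\G)$.

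The paper's route, once its uniform-convergence claim is properly justified (e.g.\ by joint continuity of $(x,\g)\mapsto x\cdot\g$ on $Y\times\G$ and compactness of $\bigcup_{x\in K}l_{x^{-1}}(\supp\Phi)$), gives more: the bound $\big|\int_\G\Phi\,d(l_x)_\#\nu-\int_\G\Phi\,d\nu\big|\le\sup_{\G}|\Phi\circ l_x-\Phi|$, uniform over all $\nu\in\mathcal{P}(\G)$. That uniformity is what the equicontinuity step in Lemma \ref{lem_transl_inv_comp} actually uses. Your argument yields continuity only for each fixed $\nu$. That is all the present lemma asserts, and it suffices for its use in showing that $\{(l_y)_\#\eta_{t,e}\}_{y\in Y}$ belongs to $\mathcal{X}_\mu$.
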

\begin{proof}
Thanks to Lemma \ref{lem_cont_ly},	without loss of generality, we take $y=e$. Let $K$ be a compact neighborhood of $e$, and
	let $\Phi\in C_c(\G)$. Observe that the family $\{\Phi\circ l_x\}_{x\in K}$ is compact in $C_c(\G)$, therefore $\Phi\circ l_x$ converges to $\Phi$ in $C_c(\G)$ as $x\to e$, which implies that
	\begin{equation}
	\lim_{x\to e}\int_\G \Phi(l_x(\g))\nu(d\g)=\int_\G\Phi(\g)\nu(d\g),
	\end{equation}
	and the thesis follows since $\Phi$ was arbitrary. 
\end{proof}
\subsection{Structural property of translation invariance}
Let us define translation invariant elements of $\mathcal{M}_\mu(\G)$. 
\begin{definition}
	We shall say that $\eeta\in \mathcal{M}_\mu(\G)$ is (left) translation invariant, if for each $y\in Y$ we have $(l_y)_\#\eeta=\eeta$. 
\end{definition}
The following structural result for translation invariant elements of $\mathcal{M}_\mu(\G)$ will be useful. 
\begin{lemma}\label{lem_transl_inv_disint}
	Let $\eeta\in \mathcal{M}_\mu(\G)$ be translation invariant. Then for every $t\in \R$, there exists a unique $\eta_{t,e}\in \mathcal{P}(\G)$ such that $\{(l_y)_\#\eta_{t,e}\}_{y\in Y}$ is a disintegration of $\eeta$ with respect to $e_t$ and $\mu$. 
\end{lemma}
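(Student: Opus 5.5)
Fix $t\in\R$ and let $\{\eta_{t,x}\}_{x\in Y}$ be any disintegration of $\eeta$ with respect to $e_t$ and $\mu$; it exists by Section \ref{subsec_disint}. I will first show that the family is equivariant for $\mu$-a.e. pair, and then build the candidate $\eta_{t,e}$ by averaging the family against a test function. Uniqueness should come from a short separate argument.

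\emph{Step 1: equivariance.} Fix $y\in Y$. Since $(l_y)_\#\eeta=\eeta$, I apply Lemma \ref{lem_disintegration_push_forward} with $g=l_y$, $f=e_t$ and $h=e_t\circ l_y^{-1}$, suitably reindexed. Concretely, $e_t(l_y\g)=y\cdot e_t(\g)$. Left invariance of $\mu$ gives $(L_y)_\#\mu=\mu$, where $L_y(x)=y\cdot x$. Together these show that $\{(l_y)_\#\eta_{t,y^{-1}x}\}_{x\in Y}$ is again a disintegration of $\eeta$ with respect to $e_t$ and $\mu$. By essential uniqueness, for every $y$ we get $(l_y)_\#\eta_{t,x}=\eta_{t,yx}$ for $\mu$-a.e. $x$. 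Fubini on $Y\times Y$ (both coordinates with $\mu$, using joint Borel measurability of $(y,x)\mapsto(l_y)_\#\eta_{t,x}$, which follows from Lemma \ref{lem_cont_ly} and Lemma \ref{lem_cont_group}) then gives a $\mu$-full set of $x$ such that $\eta_{t,yx}=(l_y)_\#\eta_{t,x}$ for $\mu$-a.e. $y$.

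\emph{Step 2: construction of $\eta_{t,e}$.} Fix such a good $x_0$ and put $\eta_{t,e}:=(l_{x_0^{-1}})_\#\eta_{t,x_0}$. This measure is concentrated on $\{\g(t)=e\}$, and $(l_y)_\#\eta_{t,e}=(l_{yx_0^{-1}})_\#\eta_{t,x_0}$, which equals $\eta_{t,y}$ for $\mu$-a.e. $y$ by the change of variables $y\mapsto yx_0^{-1}$; this map preserves $\mu$-null sets because right translations do, via the modular function. The family $\{(l_y)_\#\eta_{t,e}\}_{y}$ is then Borel, indeed continuous by Lemma \ref{lem_cont_group}. Each $(l_y)_\#\eta_{t,e}$ is concentrated on $\{\g(t)=y\}$, and the family agrees $\mu$-a.e. with a disintegration, so it is itself a disintegration. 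A cleaner alternative, if the choice of a good point causes measurability trouble, is to define $\eta_{t,e}$ directly as $\int_Y (l_{x^{-1}})_\#\eta_{t,x}\,\phi(x)\mu(dx)$ for any $\phi\in\mathscr{T}$; Step 1 shows this is independent of $\phi$ and equals the point value above.

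\emph{Step 3: uniqueness.} Suppose $\rho,\rho'$ both work. Then $(l_y)_\#\rho=(l_y)_\#\rho'$ for $\mu$-a.e. $y$, and in particular for some $y$ since $\mu\neq0$. Applying $(l_{y^{-1}})_\#$ gives $\rho=\rho'$.

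The main obstacle is the Fubini step, since it needs a jointly measurable version of the two-variable identity. Handling it through the averaged definition with $\phi\in\mathscr{T}$ together with Lemma \ref{lem_cont_group} avoids selecting a single good point.
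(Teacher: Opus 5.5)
Your proof is correct, and its skeleton is the paper's. Invariance of $\eeta$ under $l_y$, left invariance of $\mu$ and essential uniqueness of disintegrations give, for each fixed $y$, $\eta_{t,yx}=(l_y)_\#\eta_{t,x}$ for $\mu$-a.e.\ $x$. Uniqueness of $\eta_{t,e}$ again comes from essential uniqueness.

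The real difference is how you pass from this one-parameter identity to a single measure. The paper obtains $\eta_{t,x}=(l_y)_\#\eta_{t,y^{-1}\cdot x}$ for $x\notin N$, then sets $y=x$ and reads off $\eta_{t,e}$ as the value of the given disintegration at the point $e$. As written this is not justified, for two reasons:
\begin{itemize}
\item $N$ was produced for a fixed $y$, so the diagonal substitution $y=x$ requires control of a two-variable exceptional set.
\item When $\mu(\{e\})=0$ (e.g.\ Lebesgue measure on $\R^d$), the value at $e$ of an arbitrary disintegration can be changed at will without leaving its equivalence class.
\end{itemize}
Your steps supply exactly what that passage needs: the Fubini argument on $Y\times Y$, the choice of a good base point $x_0$, the definition $\eta_{t,e}:=(l_{x_0^{-1}})_\#\eta_{t,x_0}$, and the remark that the right translate $N_0x_0$ of the $\mu$-null exceptional set $N_0$ is again null (via the modular function). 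Your route costs a measurability check. The paper's shortcut avoids that check, but it does not actually establish the claim.

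Two minor points.
\begin{itemize}
\item Joint Borel measurability of $(y,x)\mapsto(l_y)_\#\eta_{t,x}$ is true, but Lemmas \ref{lem_cont_ly} and \ref{lem_cont_group} only give separate continuity. You should add one of the following:
\begin{itemize}
\item the classical fact that separately continuous maps on products of separable metrizable spaces are Borel (Baire class one);
\item more directly, for $\Phi\in C_c(\G)$ the map $(y,\g)\mapsto\Phi(y\cdot\g)$ is bounded and jointly continuous, so $(y,x)\mapsto\int_\G\Phi(y\cdot\g)\,\eta_{t,x}(d\g)$ is Borel by the usual monotone class argument for Borel kernels.
\end{itemize}
\item The averaged definition $\int_Y(l_{x^{-1}})_\#\eta_{t,x}\,\phi(x)\mu(dx)$ does not bypass Fubini. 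Its independence of $\phi$ rests on $x\mapsto(l_{x^{-1}})_\#\eta_{t,x}$ being $\mu$-a.e.\ constant, which is precisely what the Fubini step together with a good base point yields.
\end{itemize}
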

Under the hypothesis of the above lemma, in view of Lemma \ref{lem_cont_group}, we shall call $\{(l_y)_\#\eta_{t,e}\}_{y\in Y}$ \emph{the} disintegration of $\eeta$ with respect to $e_t$ and $\mu$ for every $t\in \R.$
\begin{proof}
		Let $\mathscr{G}$ be a countable family of Borel sets generating the Borel $\s$-algebra of $\G$. Note that such a family exists by separability of $\G$. Let $t\in \R$ and let $\{\eta_{t,x}\}_{x\in Y}$ be a disintegration of $\eeta$ with respect to $e_t$ and $\mu$. Then we have 
	\begin{equation}
	\int_Y (l_y)_\#\eta_{t,x}\mu(dx)=(l_y)_\#\eeta=\eeta=\int_Y \eta_{t,x}\mu(dx). 
	\end{equation}
	Let $B$ be a Borel set in $Y$ and let $A\in \mathscr{G}$. We then have
	\begin{equation}
	\int_Y \eta_{t,x}(A\cap \{\g(t)\in B\})\mu(dx)=\int_Y (l_y)_\#\eta_{t,x}(A\cap \{\g(t)\in B\})\mu(dx).
	\end{equation}
	Since $\eta_{t,x}$ is concentrated on $\{\g\in\G\;:\;\g(t)=x\}$ for every $x\in Y$, we get
	\begin{equation}
	\int_{B} \eta_{t,x}(A)\mu(dx)=\int_{y^{-1}\cdot B} (l_y)_\# \eta_{t,x}(A)\mu(dx).
	\end{equation}
	Since $B$ is an arbitrary Borel set in $Y$, by Lusin's theorem there exists a $\mu$-negligeable set $N_A\subset Y$ such that for every $x\in N_A$, we have 
	\begin{equation}
	\eta_{t,x}(A)=(l_y)_\#\eta_{t,y^{-1}\cdot x}(A).
	\end{equation}
	Define now $$N:=\bigcup_{A\in\mathscr{G}}N_A,$$ which is also a $\mu$-negligeable set, and since $\mathscr{G}$ generates the Borel $\s$-algebra of $\G$, we have 
	\begin{equation}
	\eta_{t,x}=	(l_y)_\#\eta_{t,y^{-1}\cdot x} \qquad\forall x\in Y\backslash N. 
	\end{equation}
	In particular, choosing $y=x$, we get
	\begin{equation}
	\eta_{t,x}=	(l_x)_\#\eta_{t,e} \qquad\forall x\in Y\backslash N,
	\end{equation}
	whence $\{(l_x)_\#\eta_{t,e}\}_{x\in Y}$ is a disintegration of $\eeta$ with respect to $e_t$ and $\mu$, and by essential uniqueness of disintegrations $\eta_{t,e}$ is uniquely determined. 
\end{proof}

\subsection{Proof of Theorem \ref{them_main_transl_inv}}

Given a subset $I$ of $\R$, we define the subset $H^I$ of disintegrations at time $t\in I$ of translation invariant elements of $\mathcal{M}_\mu(\G)$ by 
\begin{equation}
H^I:=\{\eeta_t\in \mathcal{X}_\mu\;:\;\text{ $\eeta$ is translation invariant and $t\in I$}\}. 
\end{equation}
`\begin{lemma}\label{lem_transl_inv_comp}
	Let $I$ be a compact interval in $\R$. Then the subset $H^I$ is precompact in $\mathcal{X}_\mu$. 
\end{lemma}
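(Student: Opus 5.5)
We must show that the set $H^I$ of disintegrations $\{(l_x)_\#\eta_{t,e}\}_{x\in\supp\mu}$, over translation invariant $\eeta$ and $t\in I$, is precompact in $\mathcal{X}_\mu$. Since $\mathcal{X}_\mu$ carries the uniformity of compact convergence on $\supp\mu$ (here $\supp\mu=Y$, as the Haar measure has full support), the plan is to use Ascoli's theorem. It requires two things: equicontinuity of the family $H^I$ as maps $Y\to\mathcal{P}(\G)$ on each compact $K\subset Y$, and precompactness in $\mathcal{P}(\G)$ of $H^I(x)$ for each $x\in Y$. By Lemma \ref{lem_transl_inv_disint}, every element of $H^I$ has the form $x\mapsto (l_x)_\#\nu$, where $\nu=\eta_{t,e}\in\mathcal{P}(\G)$ is concentrated on $\G_{t,e}=\{\g(t)=e\}$ and $t\in I$. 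So everything reduces to controlling the set $\mathcal{N}^I:=\bigcup_{t\in I}\mathcal{P}(\G_{t,e})$.

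\emph{Pointwise precompactness.} Each $\nu\in\mathcal{N}^I$ is concentrated on $\bigcup_{t\in I}e_t^{-1}(\{e\})\cap\G$. This set is compact by (Comp-$\G$), and we call it $C$. Then $(l_x)_\#\nu$ is concentrated on $l_x(C)$, which is compact because $l_x$ is continuous on $\G$ (Lemma \ref{lem_conv_l_x} together with continuity of the group operation). Hence $H^I(x)\subset\mathcal{P}(l_x(C))$, and this set is vaguely compact by Proposition \ref{prop_comp_measure}, viewed inside $\mathcal{P}(\G)$ via pushforward by the inclusion. This gives condition (ii) of Ascoli.

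\emph{Equicontinuity.} This is the main step. Fix $x_0\in K$ and a basic entourage $V=\bigcap_{i=1}^N{\rm d}_{\Phi_i}^{-1}([0,\a])$ of $\mathcal{P}(\G)$. We must find a neighbourhood $U$ of $x_0$ such that
\[
\Big|\int_\G \big(\Phi_i\circ l_x-\Phi_i\circ l_{x_0}\big)\,d\nu\Big|<\a
\]
for all $x\in U$, all $\nu\in\mathcal{N}^I$ and all $i$. Since every such $\nu$ is a probability measure on the single compact set $C$, it suffices to show that $\sup_{\g\in C}|\Phi_i(l_x\g)-\Phi_i(l_{x_0}\g)|\to0$ as $x\to x_0$. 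For this, we consider the map $(x,\g)\mapsto l_x\g$ from $Y\times\G$ to $\G$ and argue that it is jointly continuous. Joint continuity comes from continuity of the multiplication $Y\times Y\to Y$, uniformly on compact time intervals, together with (Comp-$\G$) to pass from compact-open convergence in $\mathcal{C}_c(\R;Y)$ to convergence in $\G$. The composition $\Phi_i\circ l$ is then continuous on the compact set $\overline{U_0}\times C$ for a compact neighbourhood $U_0$ of $x_0$, hence uniformly continuous there, which yields the desired uniform estimate in $\g\in C$. This is the same mechanism as the assertion in Lemma \ref{lem_cont_group} that $\{\Phi\circ l_x\}_{x\in K}$ is compact in $C_c(\G)$. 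The uniformity is independent of $\nu$ and $t$, which is exactly equicontinuity.

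\emph{Conclusion.} Ascoli's theorem, with $\mathfrak{S}$ the compacts of $Y$, then gives precompactness of $H^I$ in the uniformity of compact convergence, i.e.\ in $\mathcal{X}_\mu$; the elements of $H^I$ are continuous by Lemma \ref{lem_cont_group}. The main obstacle I expect is the joint continuity and uniform continuity of $(x,\g)\mapsto\Phi(l_x\g)$ on $U_0\times C$. If needed, I will prove it sequentially, using metrizability of $Y$ and $\G$ and the compactness of $e_0^{-1}(K')$ for a compact $K'\supset \overline{U_0}\cdot C(0)$.
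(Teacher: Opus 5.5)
Your proposal is correct and follows the paper's proof. Both use Ascoli's theorem, with pointwise precompactness coming from Proposition \ref{prop_comp_measure} applied to the compact set $\bigcup_{t\in I}e_t^{-1}(\{x\})$ (your $l_x(C)$), and equicontinuity coming from continuity of $x\mapsto l_x$. Your justification of the uniform estimate is more careful than the paper's: you restrict to the compact set $C$ and use joint continuity of $(x,\g)\mapsto l_x\g$, whereas the paper passes directly from pointwise convergence $l_x\g\to l_y\g$ to a bound on $\sup_{\g\in\G}|\Phi_i(l_x\g)-\Phi_i(l_y\g)|$.
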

\begin{proof}
	1. By Ascoli's theorem, we have to check that:
	\begin{enumerate}
		\item for each compact $A$ in $Y$, the set $H^I\lfloor_A$ of restrictions to $A$ of functions of $H^I$ is equicontinuous;
		\item for each $y\in Y$, the set $H^I(y)\subset \mathcal{P}(\G)$ of points $\eta_{t,y}$ is vaguely precompact. 
	\end{enumerate}
	2. Let us check $(i)$. Let $A$ be a compact in $Y$. Let $V$ be an entourage in the fundamental system $\mathfrak{V}$ of $\mathcal{P}(\G)$ described in Section \ref{subsec_radon_proba} of the form
	\begin{equation}
	V=\bigcap_{i=1}^N {\rm d}_{\Phi_i}^{-1}([0,\a]). 
	\end{equation}
	
	Let $t\in I$, and let $\eeta_t\in H^I$. Then we have that $(\eta_{t,x},\eta_{t,y})=((l_x)_\#\eta_{t,e},(l_y)_\#\eta_{t,e})\in V$ for every $x,y\in A$, if and only if
	\begin{equation}\label{eqn_l_x}
	\Big|\int_\G \Phi_i(l_x\g)\eta_{t,e}(d\g)-\int_\G \Phi_i(l_y\g)\eta_{t,e}(d\g)\Big|<\a \qquad\forall i=1,\dots, N.  
	\end{equation}
	In view of Lemma \ref{lem_conv_l_x} for every $\g\in \G$, we have that $l_x\g$ converges to $l_y\g$ in $\G$ as $x\to y$. Therefore, there is an open neighborhood $\mathscr{N}(y)\subset Y$ of $y$ such that for every $i=1,\dots, N$ and every $x\in \mathscr{N}(y)$ we have $\sup_{\g\in \G}|\Phi_i(l_x\g)-\Phi_i(l_y\g)|<\a$, whence \eqref{eqn_l_x} holds, i.e. $(\eta_{t,x},\eta_{t,y})\in V$. As $V$ is arbitrary in $\mathfrak{V}$, $\eeta_t$ is arbitrary in $H^I$ and $y$ is arbitrary in $Y$, it follows that $H^I\lfloor_A$ is equicontinuous.

	\bigskip 
	2.bis. Let us check $(ii)$. Let $y\in Y$ and recall that $\bigcup_{t\in I} e_{t}^{-1}(\{y\})\subset \G$ is compact. Observe now that $H^I(y)$ is contained in $\mathcal{P}(\bigcup_{t\in I} e_{t}^{-1}(\{y\}))$. By Proposition \ref{prop_comp_measure}, $\mathcal{P}(\bigcup_{t\in I} e_{t}^{-1}(\{y\}))$ is compact.
	
	This proves the thesis. 
\end{proof}

We can conclude this paper with the following proof.
\begin{proof}[Proof of Theorem \ref{them_main_transl_inv}]
	1. Let $t\in \R$. Let us show that if $\eeta$ is translation invariant, then $M_t(\eeta)$ is also translation invariant. In view of Lemma \ref{lem_transl_inv_disint}, we let $\{(l_y)_\#\eta_{t,e}\}_{y\in Y}$ be the disintegration of $\eeta$ with respect to $e_t$ and $\mu$. Then we have by Lemma \ref{lem_tensor_proj} that
	\begin{equation}
	M_t(\eeta)=\int_Y (l_y)_\#\eta^{(-\infty,t]}_{t,e}\otimes_{(t,y)}  (l_y)_\# \eta^{[t,+\infty)}_{t,e}\mu(dy)=\int_Y (l_y)_\#[\eta^{(-\infty,t]}_{t,e}\otimes_{(t,e)}  \eta^{[t,+\infty)}_{t,e} ]\mu(dy). 
	\end{equation}
	So for every $x\in Y$, and every Borel set $B$ in $\G$, we have 
	\begin{equation}
	\begin{split} 
	(l_x)_\#M_t(\eeta)(B)&=\int_Y (l_x)_\#(l_{ y})_\#[\eta^{(-\infty,t]}_{t,e}\otimes_{(t,e)}  \eta^{[t,+\infty)}_{t,e} ](B)\mu(dy)\\
	&=\int_Y (l_{ y})_\#[\eta^{(-\infty,t]}_{t,e}\otimes_{(t,e)}  \eta^{[t,+\infty)}_{t,e} ](x^{-1}\cdot B)\mu(dy)\\
		&=\int_Y (l_{ y})_\#[\eta^{(-\infty,t]}_{t,e}\otimes_{(t,e)}  \eta^{[t,+\infty)}_{t,e} ]( B)(x^{-1}\cdot)_\#\mu(dy)\\
		&=\int_Y (l_{ y})_\#[\eta^{(-\infty,t]}_{t,e}\otimes_{(t,e)}  \eta^{[t,+\infty)}_{t,e} ]( B)\mu(dy)\\
		&=M_t(\eeta)(B),
	\end{split} 
	\end{equation}
	where in the second equality, we have used the definition of $l_x$, in the third equality we have used the definition of the pushforward of $\mu$ along the map $Y\ni y\longmapsto x^{-1}\cdot y\in Y,$ and in the fourth equality we have used left translation invariance of $\mu$. 
	Therefore $M_t(\eeta)$ is translation invariant and $\{(l_y)_\#[\eta^{(-\infty,t]}_{t,e}\otimes_{(t,e)}  \eta^{[t,+\infty)}_{t,e} ]\}_{y\in Y}$ is the disintegration of $M_t(\eeta)$ with respect to $e_t$ and $\mu$. 
	\bigskip 
	
2. Let $I$ be compact in $\R$, and let $F$ be a finite subset of $\R$. By 1. of this proof $M_F(\eeta)$ is translation invariant, whence by Lemma \ref{lem_transl_inv_disint} for every $t\in I$, we have that $\{(l_y)_\#\eta_{t,e;F}\}_{y\in Y}$ is the disintegration of $M_F(\eeta)$ with respect to $e_t$ and $\mu$ for some $\eta_{t,e;F} \in \mathcal{P}(\G)$, so that by Lemma \ref{lem_cont_group},  $(M_F(\eeta))_t\in \mathcal{X}_\mu$, and therefore $(M_F(\eeta))_t\in H^I$. By Lemma \ref{lem_transl_inv_comp}, the closure of $H^I$ is compact. As $I$ and $F$ were arbitrary, this shows that $\eeta$ is Markov regular. 
\end{proof}

\bibliographystyle{plain}
\bibliography{bibliografia}

\begin{bibdiv}
\begin{biblist}

\bib{ABC14}{article}{
      author={Alberti, Giovanni},
      author={Bianchini, Stefano},
      author={Crippa, Gianluca},
       title={A uniqueness result for the continuity equation in two
  dimensions},
        date={2014},
     journal={Journal of the European Mathematical Society},
      volume={16},
       pages={201\ndash 234},
}

\bib{ambrosiocrippaedi}{article}{
      author={Ambrosio, Luigi},
      author={Crippa, Gianluca},
       title={Continuity equations and {ODE} flows with non-smooth velocity},
        date={2014},
     journal={Proc. Roy. Soc. Edinburgh Sect. A},
      volume={144},
       pages={1191\ndash 1244},
}

\bib{AmbBV}{article}{
      author={Ambrosio, Luigi},
       title={Transport equation and {C}auchy problem for {B}{V} vector
  fields},
        date={2004},
     journal={Inventiones mathematicae},
      volume={158},
       pages={227\ndash 260},
}

\bib{BianchiniBonicatto20}{article}{
      author={Bianchini, Stefano},
      author={Bonicatto, Paolo},
       title={A uniqueness result for the decomposition of vector fields in
  $\mathbb{R}^d$},
        date={2020},
     journal={Inventiones Mathematicae},
      volume={220},
       pages={255 \ndash  393},
}

\bib{BianchiniDeNitti}{article}{
      author={Bianchini, Stefano},
      author={De~Nitti, Nicola},
       title={Differentiability in {M}easure of the {F}low {A}ssociated with a
  {N}early {I}ncompressible {BV} {V}ector {F}ield},
        date={2022},
     journal={Archive of Rational Mechanics and Analysis},
      volume={246},
       pages={659\ndash 734},
}

\bib{Bianchini_Marconi_Concentration}{article}{
      author={Bianchini, Stefano},
      author={Marconi, Elio},
       title={On the concentration of entropy scalar conservation laws},
        date={2016},
     journal={Discrete and Continuous Dynamical Systems Series S},
      volume={9},
       pages={73\ndash 88},
}

\bib{Bianchini_Marconi_Structure}{article}{
      author={Bianchini, Stefano},
      author={Marconi, Elio},
       title={On the {S}tructure of ${L}^\infty$-{E}ntropy {S}olutions to
  {S}calar {C}onservation {L}aws in {O}ne-{S}pace {D}imension},
        date={2017},
     journal={Archive for Rational Mechanics and Analysis},
      volume={226},
       pages={441\ndash 493},
}

\bib{BressanMazzolaNguyen-Markovian23}{article}{
      author={Bressan, Alberto},
      author={Mazzola, Marco},
      author={Nguyen, Khai~T.},
       title={Markovian solutions to discontinuous {ODE}s},
        date={2023},
     journal={Journal of Dynamics and Differential Equations},
      volume={35},
       pages={135\ndash 162},
}

\bib{bourbaki_topology}{book}{
      author={Bourbaki, Nicolas},
       title={General {T}opology},
      series={Elements of Mathematics},
   publisher={\'Editions Hermann, Paris},
        date={1966},
}

\bib{CripDeL06:estimates}{article}{
      author={Crippa, Gianluca},
      author={De~Lellis, Camillo},
       title={Estimates and regularity results for the {D}i{P}erna-{L}ions
  flow},
        date={2008},
     journal={J. Reine Angew. Math.},
      volume={616},
       pages={15\ndash 46},
}

\bib{DPL89}{article}{
      author={DiPerna, Ronald},
      author={Lions, Pierre-Louis},
       title={Ordinary differential equations, transport theory and {S}obolev
  spaces},
        date={1989},
     journal={Inventiones Mathematicae},
      volume={98},
       pages={511\ndash 547},
}

\bib{DellacherieMeyer}{book}{
      author={Dellacherie, Claude},
      author={Meyer, Paul-Andr\'e},
       title={Probabilities and potential},
      series={North-Holland Mathematics studies},
        date={1978},
}

\bib{herrlich_axiom_of_choice}{book}{
      author={Herrlich, Horst},
       title={Axiom of {C}hoice},
      series={Lecture Notes in Mathematics},
   publisher={Springer},
        date={2006},
      volume={1876},
}

\bib{Pitcho_Int}{article}{
      author={Pitcho, Jules},
       title={On the stochastic selection of integral curves of a rough vector
  field},
        date={2026},
     journal={Proceedings of the Edinburgh Mathematical Society},
}

\bib{Pitcho_SDE}{article}{
      author={Pitcho, Jules},
       title={On the zero-noise limit for {SDE}'s singular at the initial
  time},
        date={2026},
     journal={Nonlinear Differential Equations and Applications},
      volume={33},
      number={1},
}

\bib{schwartz_radon_measures}{book}{
      author={Schwartz, Laurent},
       title={{R}adon {M}easures on {A}rbitrary {T}opological {S}paces and
  {C}ylindrical {M}easures,},
      series={Tata Institute of Fundamental Research},
   publisher={Oxford University Press},
        date={1973},
}

\bib{Smirnov94}{article}{
      author={Smirnov, Stanislav},
       title={Decomposition of solenoidal vector charges into elementary
  solenoids, and the structure of normal one-dimensional flows},
        date={1994},
     journal={St. Petersburg Math. J.},
      volume={5},
       pages={841\ndash 867},
}

\bib{andre_weil_uniform}{book}{
      author={Weil, Andr\'e},
       title={Sur les espaces \`a structure uniforme et sur la topologie
  g\'en\'erale},
      series={Actualit\'es Scientifiques et Industrielles},
   publisher={\'Editions Hermann, Paris},
        date={1937},
}

\end{biblist}
\end{bibdiv}

\end{document}